\theoremstyle{plain} 
\newtheorem{theorem}{Theorem}[section]
\newtheorem{lemma}[theorem]{Lemma}
\newtheorem{corollary}[theorem]{Corollary}
\newtheorem{proposition}[theorem]{Proposition}
\newcommand{\stpath}[2]{$#1$--$#2$ path}
\tikzset{
  prefix after node/.style={prefix after command=\pgfextra{#1}},
  /semifill/ang/.initial=0,
  /semifill/upper/.initial=none,
  /semifill/lower/.initial=none,
  semifill/.style={
    circle, draw,
    prefix after node={
      \pgfqkeys{/semifill}{#1}
      \path let \p1 = (\tikzlastnode.north), \p2 = (\tikzlastnode.center),
                \n1 = {\y1-\y2} in [radius=\n1]
            (\tikzlastnode.\pgfkeysvalueof{/semifill/ang}) 
            edge[
              draw=none,
              fill=\pgfkeysvalueof{/semifill/upper},
              to path={
                arc[start angle=\pgfkeysvalueof{/semifill/ang}, delta angle=180]
                -- cycle}] ()
            (\tikzlastnode.\pgfkeysvalueof{/semifill/ang}) 
            edge[
              draw=none,
              fill=\pgfkeysvalueof{/semifill/lower},
              to path={
                arc[start angle=\pgfkeysvalueof{/semifill/ang}, delta angle=-180]
                -- cycle}] ();}}}
\tikzset{
  corner/.style  = {circle, semifill={lower=black!50}, draw= blue, inner sep=1.5pt},
  b-vertex/.style = {fill=RubineRed, diamond, draw=blue, inner sep=1.5pt},
  a-vertex/.style = {draw=blue, fill=cyan, inner sep=2pt}
}
\tikzstyle{filled vertex}  = [{circle,draw=blue,fill=black!50,inner sep=1.pt}]  
\tikzstyle{empty vertex}  = [{circle, draw, fill = white, inner sep=1.5pt, minimum width=1.5pt}]
\newcommand{\lp}{\ensuremath{\operatorname{lp}}}
\newcommand{\rp}{\ensuremath{\operatorname{rp}}}
\newcommand{\lk}{\ensuremath{\operatorname{lk}}}
\newcommand{\rk}{\ensuremath{\operatorname{rk}}}
\title{Characterization of Circular-arc Graphs: I. Split Graphs}
 \author{
   Yixin Cao\thanks{Department of Computing, Hong Kong Polytechnic University, Hong Kong, China.  \texttt{yixin.cao@polyu.edu.hk}.
   This work was done while Y.C.~was visiting the Jagiellonian University.
   } 
   \and Jan Derbisz\thanks{Theoretical Computer Science Department,
 Institute of Computer Science,
 Jagiellonian University.
 Research of this author was partially funded by Polish National Science Center (NCN) grant 2021/41/N/ST6/03671.
 }
   \and Tomasz Krawczyk\thanks{Faculty of Mathematics and Information Science, Warsaw University of Technology, Poland. \texttt{tomasz.krawczyk@pw.edu.pl}.}
 }
\date{}
\begin{document}
\maketitle
\begin{abstract}
  The most elusive problem around the class of circular-arc graphs is identifying all minimal graphs that are not in this class. The main obstacle is the lack of a systematic way of enumerating these minimal graphs. McConnell [FOCS 2001] presented a transformation from circular-arc graphs to interval graphs with certain patterns of representations. We fully characterize these interval patterns for circular-arc graphs that are split graphs, thereby building a connection between minimal split graphs that are not circular-arc graphs and minimal non-interval graphs. This connection enables us to identify all minimal split graphs that are not circular-arc graphs.
  As a byproduct, we develop a polynomial-time certifying recognition algorithm for circular-arc graphs when the input is a split graph.  
  
\end{abstract}

\setcounter{page}{1} 

\section{Introduction}\label{sec:intro}

A graph is a \emph{circular-arc graph} if its vertices can be assigned to arcs on a circle such that two vertices are adjacent if and only if their corresponding arcs intersect.  Such a set of arcs is called a \emph{circular-arc model} for this graph (Figure~\ref{fig:normal-and-helly}).
If we replace the circle with the real line and arcs with intervals, we end with interval graphs.
All interval graphs are circular-arc graphs.
Both graph classes are by definition \emph{hereditary}, i.e., closed under taking induced subgraphs.

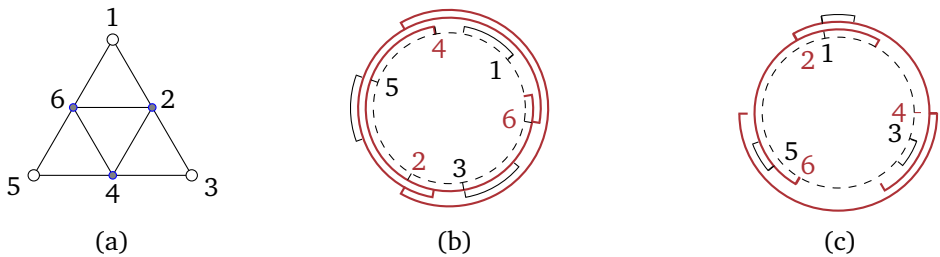
\begin{figure}[h!]
  \centering\small
  \begin{subfigure}[b]{.23\linewidth}
    \centering
    \begin{tikzpicture}[scale=.6]\small
      \foreach \i in {1, 2, 3} 
      \draw ({120*\i-90}:1) -- ({120*\i+30}:1) -- ({120*\i-30}:2) -- ({120*\i-90}:1);
      \foreach[count =\i from 0] \n in {1, 3, 5} {
        \node[empty vertex] at ({90 - 120*\i}:2) {};
        \node at ({90 - 120*\i}:2.5) {$\n$};
        \pgfmathsetmacro{\x}{int(\n+1)}%
        \node[filled vertex] at ({30 - 120*\i}:1) {};
        \node at ({30 - 120*\i}:1.4) {$\x$};
      }
    \end{tikzpicture}
    \caption{}
  \end{subfigure}  
  \;
  \begin{subfigure}[b]{.3\linewidth}
    \centering
    \begin{tikzpicture}[scale=.1]
      \begin{scope}[every path/.style={{|[left]}-{|[right]}}]        
        \draw[Sepia,thick] (120:13) arc (120.:-120:13); \draw (200:13) arc (200:160:13);
        \draw[Sepia,thick] (260:12) arc (260:-10:12); \draw (-40:12) arc (-40.:-80:12);
        \draw[Sepia,thick] (370:11) arc (370:100:11); \draw (80:11) arc (80:40:11);
      \end{scope}
      \draw[dashed,thin] (10,0) arc (0:360:10);
      \foreach[count=\j] \i/\c in {40/, -120/Sepia, -80/, 100/Sepia, 160/, -10/Sepia} {
        \draw[dashed] (\i:11) -- (\i:10);
        \node[\c] at (\i:8) {${\j}$};
      }
    \end{tikzpicture}
    \caption{}
  \end{subfigure}  
  \;
  \begin{subfigure}[b]{.3\linewidth}
    \centering
    \begin{tikzpicture}[scale=.1]
      \begin{scope}
        \foreach[count=\j from 0] \p/\q in {6/3, 2/5, 4/1} {
          \pgfmathsetmacro{\radius}{11+\j}
          \pgfmathsetmacro{\span}{180}
          \pgfmathsetmacro{\start}{240 - 120*\j}
          \draw[{|[left]}-{|[right]}, thick, Sepia]  (\start:\radius) arc (\start:{\start-\span}:\radius); 
          \draw[Sepia, dashed] ({\start}:\radius) -- ({\start}:10);
          \node[Sepia] at ({\start}:8) {${\p}$};

          \pgfmathsetmacro{\span}{20}
          \pgfmathsetmacro{\start}{340 - 120*\j}
          \draw[{|[left]}-{|[right]}]  (\start:\radius) arc (\start:{\start-\span}:\radius);
          \draw[dashed] ({\start}:\radius) -- ({\start}:10);
          \node at ({\start}:8) {${\q}$};
        }
      \end{scope}
      \draw[dashed,thin] (10,0) arc (0:360:10);
    \end{tikzpicture}
    \caption{}
  \end{subfigure}  
  \caption{\small A circular-arc graph and its two circular-arc models.  In (b), any two arcs for vertices~$\{2, 4, 6\}$ cover the circle; in (c), the three arcs for vertices~$\{2, 4, 6\}$ do not share any common point.}
  \label{fig:normal-and-helly}
\end{figure}

While both classes have been intensively studied, there is a huge gap between our understanding of them.
One fundamental combinatorial problem on a hereditary graph class is its characterization by \emph{forbidden induced subgraphs}, i.e., minimal graphs that are not in the class.
For example, the {forbidden induced subgraphs} of interval graphs are holes (induced cycles of length at least four) and those in Figure~\ref{fig:non-interval}~\cite{lekkerkerker-62-interval-graphs}.  The same problem on circular-arc graphs, however, has been open for sixty years~\cite{hadwiger-64-combinatorial-geometry, klee-69-cag}.

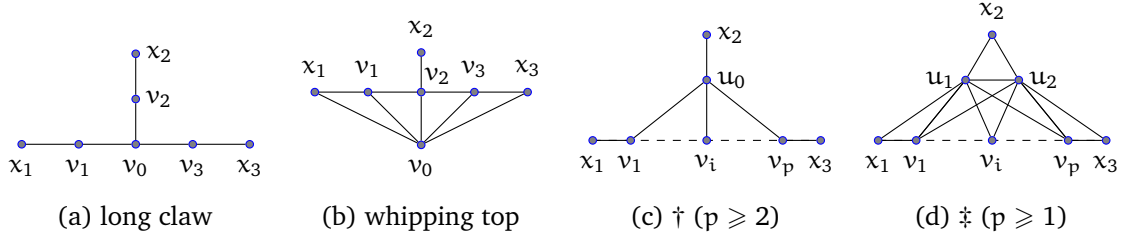
\begin{figure}[h]
  \tikzstyle{every node}=[filled vertex]
  \centering \small
  \begin{subfigure}[b]{0.22\linewidth}
    \centering
    \begin{tikzpicture}[every node/.style={filled vertex}, xscale=.75, yscale=.6]
      \node["$v_{0}$" below] (c) at (0, 0) {};
      \foreach[count=\i] \p in {below, right, below} {
        \node["$x_{\i}$" \p] (u\i) at ({90*(3-\i)}:2) {};
        \node["$v_{\i}$" \p] (v\i) at ({90*(3-\i)}:1) {};
        \draw (u\i) -- (v\i) -- (c);
      }
    \end{tikzpicture}
    \caption{long claw}\label{fig:long-claw}
  \end{subfigure}
  \,
  \begin{subfigure}[b]{0.22\linewidth}
    \centering
    \begin{tikzpicture}[every node/.style={filled vertex}, scale=.7]
      \node["$v_{0}$" below] (v7) at (0, -1) {};
      \draw (-2, 0) -- (2, 0);
      \foreach[count=\i from 2] \v/\p in {x_{1}/above, v_{1}/above, v_{2}/above right, v_{3}/above, x_{3}/above} {
        \node["$\v$" \p] (v\i) at ({\i - 4}, 0) {};
        \draw (v\i) -- (v7);
      }
      \node["$x_{2}$"] (v1) at (0, .75) {};
      \draw (v4) -- (v1);      
    \end{tikzpicture}
    \caption{whipping top}\label{fig:whipping-top}
  \end{subfigure}
  \,
  \begin{subfigure}[b]{0.22\linewidth}
    \centering
    \begin{tikzpicture}[every node/.style={filled vertex}, yscale=.8]
      \draw[dashed] (-1.5, 0) -- (1.5, 0);
      \draw (0, 1.75) node["$x_{2}$" right] {} -- (0, 1) node["$u_{0}$" right] (c) {} -- (0, 0) node["$v_{i}$" below] {};
      \foreach[count =\i] \x/\y in {1/1, p/3} {
        \node["$x_{\y}$" below] (u\i) at ({3 * \i - 4.5}, 0) {};
        \node["$v_{\x}$" below] (v\i) at ({2 * \i - 3}, 0) {};
        \draw (u\i) -- (v\i) -- (c);
      }
    \end{tikzpicture}
    \caption{\dag{} ($p\ge 2$)}\label{fig:dag}
  \end{subfigure}
  \,
  \begin{subfigure}[b]{0.22\linewidth}
    \centering
    \begin{tikzpicture}[every node/.style={filled vertex}, yscale=.8]
      \draw[dashed] (-1.5, 0) -- (1.5, 0);
      \node["$v_{i}$" below] (v3) at (0, 0) {};
      \foreach[count =\i, evaluate={\x=int(2*\i - 3);}] \a/\b in {1/1, p/3} {
        \node["$x_{\b}$" below] (u\i) at ({1.5 * \x}, 0) {};
        \node["$v_{\a}$" below] (v\i) at ({1. * \x}, 0) {};
        \node[label = {180*\i}:$u_{\i}$] (c\i) at ({.35 * \x}, 1) {};
        \draw (u\i) -- (v\i) -- (c\i) -- (u\i);
      }
      \foreach \i in {1, 2} {
        \foreach \j in {1, 2, 3}
        \draw (c\i) -- (v\j);
      }
      \draw (0, 1.75) node["$x_{2}$"] (x) {} -- (c1) -- (c2) -- (x);
    \end{tikzpicture}
    \caption{\ddag{} ($p\ge 1$)}\label{fig:ddag}
  \end{subfigure}
  \caption{Minimal chordal graphs that are not interval graphs.}
  \label{fig:non-interval}
\end{figure}

\begin{theorem}[\cite{lekkerkerker-62-interval-graphs}]
  \label{thm:lb}
  A graph~$G$ is an interval graph if and only if it does not contain any hole or any graph in Figure~\ref{fig:non-interval} as an induced subgraph.
\end{theorem}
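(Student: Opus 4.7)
The plan is to prove both directions of the characterization using the classical fact that an interval graph is precisely a chordal graph that contains no \emph{asteroidal triple} (AT), i.e.\ no three pairwise non-adjacent vertices such that between any two there is a path avoiding the closed neighborhood of the third.

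For necessity, I would check that every graph listed is a forbidden pattern. A hole of length at least four cannot be an interval graph by a direct argument on the leftmost interval in any putative representation, which would have to meet two non-adjacent intervals. For each of the long claw, whipping top, \dag{} and \ddag, I would exhibit an asteroidal triple (for instance $\{x_1,x_2,x_3\}$ in the long claw, and analogous triples in the other families) and invoke AT-freeness of interval graphs. Since the property of being an interval graph is hereditary, this rules out all these graphs as induced subgraphs.

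For sufficiency I argue by contrapositive. Suppose $G$ is not an interval graph. If $G$ is not chordal, then a shortest induced cycle of length at least four is a hole contained in $G$, and we are done. Otherwise $G$ is chordal and hence must contain an asteroidal triple. Pick a vertex-minimal induced subgraph $H$ of $G$ that is chordal and still admits an asteroidal triple $\{a,b,c\}$; it suffices to show that $H$ is isomorphic to one of the four graphs in Figure~\ref{fig:non-interval}. Fix witnessing paths $P_{ab}, P_{bc}, P_{ca}$, where $P_{xy}$ avoids $N[z]$ for the third vertex $z$, and choose them to be shortest possible. Chordality forces each $P_{xy}$ to be an induced path with strong restrictions on chords between the three paths, and minimality of $H$ ensures that $H$ consists only of $\{a,b,c\}$, the three paths, and the vertices strictly required to close the AT certificate.

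The main obstacle is the ensuing case analysis: one has to show that no other configuration survives minimality. I would split the analysis according to whether any pair from $\{a,b,c\}$ shares a common neighbor inside $H$ and according to how the three paths meet. When all three paths have length two and their internal vertices coincide at a single common neighbor of $\{a,b,c\}$, one recovers the long claw. When two of them have length two and share the common neighbor but the third path must detour through an extra apex, the whipping top appears. The remaining cases, where one of the paths has arbitrary length $p$, yield the parameterized families \dag{} and \ddag, the second arising when two additional vertices must be added to preserve the AT while blocking every interval representation. Verifying that this enumeration is exhaustive, and that every discarded case either contradicts chordality or admits a strictly smaller asteroidal triple (violating minimality), is the technical heart of the argument.
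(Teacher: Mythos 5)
This statement is quoted from Lekkerkerker and Boland and the paper offers no proof of it at all---it is used as a black box---so there is no internal argument to compare yours against; the only question is whether your sketch would stand on its own. Its high-level strategy is the historically correct one (necessity via asteroidal triples, sufficiency via the characterization of interval graphs as chordal AT-free graphs plus an enumeration of the minimal chordal graphs containing an AT), but as written it has a genuine gap: the exhaustive case analysis that you yourself identify as ``the technical heart of the argument'' is never carried out, and that case analysis \emph{is} the theorem. Everything before it (holes are not interval, the four families contain ATs, a chordal non-interval graph contains an AT) is routine; the entire content of the statement is the claim that the enumeration closes with exactly the four families of Figure~\ref{fig:non-interval}, and you give no argument for that beyond naming the cases.

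Moreover, the partial case descriptions you do give are not correct. You say the long claw arises ``when all three paths have length two and their internal vertices coincide at a single common neighbor of $\{a,b,c\}$.'' A vertex adjacent to all of $a$, $b$, and $c$ cannot serve as the interior of any of the three witnessing paths, since the path from $a$ to $b$ must avoid $N[c]$; that configuration is not an asteroidal triple at all (it is a claw, which is an interval graph). In the long claw the AT is $\{x_1,x_2,x_3\}$ and each witnessing path has length four, passing through two of the $v_i$ and the hub $v_0$, which is adjacent to none of the $x_i$. The descriptions of how the whipping top and the \dag{} and \ddag{} families emerge are similarly too loose to check. To repair the proof you would need to actually run the minimality argument: fix the AT, normalize the witnessing paths, and show by a careful chord analysis under chordality that every surviving configuration is isomorphic to one of the four families---or simply cite the result, as the paper does.
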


It is already very complicated to characterize chordal circular-arc graphs by forbidden induced subgraphs.
\emph{Chordal graphs}, graphs in which all induced cycles are triangles, are another superclass of interval graphs.
Bonomo et al.~\cite{bonomo-09-partial-characterization-cag} characterized chordal circular-arc graphs that are claw-free.
Through generalizing Lekkerkerker and Boland's~\cite{lekkerkerker-62-interval-graphs} structural characterization of interval graphs, Francis et al.~\cite{francis-14-blocking-quadruple} defined a forbidden structure of circular-arc graphs.
This observation enables them to characterize chordal circular-arc graphs with independence number at most four.
As we will see, however, most minimal chordal graphs that are not circular-arc graphs contain claw, and their independence numbers can be arbitrarily large.

McConnell~\cite{mcconnell-03-recognition-cag} presented an algorithm that recognizes circular-arc graphs by transforming them into interval graphs.
Let~$G$ be a circular-arc graph and $\mathcal{A}$ a fixed arc model of~$G$.
If we \emph{flip} all arcs---replace arc~$[\lp, \rp]$ with arc~$[\rp, \lp]$---containing a certain point in~$\mathcal{A}$, we end with an interval model~$\mathcal{I}$.
In Figure~\ref{fig:normal-and-helly}b, for example, if we flip arcs~$2$, $4$, and~$6$, all containing the clockwise endpoint of the arc~$4$, we end with a $\dag$ graph of six vertices.
A crucial observation of McConnell~\cite{mcconnell-03-recognition-cag} is that the resulting interval graph is decided by the set~$K$ of vertices whose arcs are flipped and not by the original circular-arc models, as long as it is normalized (definition deferred to the next section).
It thus makes sense to denote it as~$G^K$.
He presented an algorithm to find a suitable set~$K$ and construct the graph directly from~$G$, without a circular-arc model.
As we will see, the construction is very simple when $G$ is chordal.
In particular, the closed neighborhood of every simplicial vertex can be used as the clique~$K$~\cite{hsu-95-independent-set-cag}.

However, $G^K$ being an interval graph does not imply that $G$ is a circular-arc graph.
The graph~$G$ is a circular-arc graph if and only if there is a clique~$K$ such that the graph~$G^K$ 
\begin{equation}
  \label{eq:1}\tag{$\sharp$}
  \parbox{\dimexpr\linewidth-4em}{%
    \strut
admits an interval model in which for every pair of vertices~$v\in K$ and~$u\in V(G)\setminus K$, the interval for~$v$ contains the interval for~$u$ if and only if they are not adjacent in~$G$.
    \strut
  }
\end{equation}


\begin{theorem}\label{thm:correlation}
  Let $G$ be a chordal graph.  The following are equivalent.
  \begin{enumerate}[i)]
  \item The graph~$G$ is a circular-arc graph.
  \item For every simplicial vertex~$s$, the graph~$G^{N[s]}$ satisfies \eqref{eq:1}.
  \item There exists a clique~$K$ such that the graph~$G^{K}$ satisfies \eqref{eq:1}.
  \end{enumerate}
\end{theorem}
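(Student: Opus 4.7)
The plan is to prove the chain (i) $\Rightarrow$ (ii) $\Rightarrow$ (iii) $\Rightarrow$ (i). The implication (ii) $\Rightarrow$ (iii) is immediate: every chordal graph has a simplicial vertex $s$, and the closed neighborhood $N[s]$ of such a vertex is a clique, so one may take $K := N[s]$.

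For (iii) $\Rightarrow$ (i), suppose $K$ is a clique and $G^K$ has an interval model satisfying \eqref{eq:1}. I would compactify the real line into a circle by adding a single point at infinity: for each $u \notin K$, keep $I_u$ as an arc on the circle; for each $v \in K$, replace $I_v$ by its complementary arc, which passes through $\infty$. Adjacencies within $V(G)\setminus K$ and within $K$ are preserved automatically (the former by construction, the latter since any two arcs through $\infty$ intersect and $K$ is a clique of $G$). For a cross pair $v \in K$, $u \notin K$, the two arcs intersect iff $I_u \not\subseteq I_v$, which by \eqref{eq:1} happens iff $uv$ is an edge of $G$, yielding a valid circular-arc model of $G$.

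The main work is (i) $\Rightarrow$ (ii). Given a chordal circular-arc graph $G$ and a simplicial vertex $s$, I would take a normalized circular-arc model $\mathcal{A}$ of $G$ and aim to produce a point $p$ on the circle covered by exactly the arcs of $N[s]$. Once such a $p$ is in hand, cutting the circle at $p$ flips precisely the arcs of $N[s]$ and yields an interval model of $G^{N[s]}$ by the very definition of $G^K$; in this interval model, for $v \in N[s]$ and $u \notin N[s]$, the containment $I_v \supseteq I_u$ is equivalent to the original arcs being disjoint on the circle, i.e., to $uv \notin E(G)$, which is exactly the condition \eqref{eq:1}.

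The hard part will be producing $p$. Any point in the interior of the arc of $s$ automatically avoids the arcs of non-neighbors of $s$ (those arcs are disjoint from that of $s$), so it suffices to ensure that the arcs in $N[s]$ share a common point inside the arc of $s$. Since $N[s]$ is a clique its arcs pairwise intersect, but pairwise intersection on a circle does not force a common point in general. I expect the argument to exploit the simpliciality of $s$ together with structural properties of chordal circular-arc graphs (effectively a Helly-type transformation of the model around $s$) to modify $\mathcal{A}$ locally so that $\bigcap_{u \in N[s]} A_u$ meets the arc of $s$; any $p$ in that intersection then witnesses $K = N[s]$ and completes the argument.
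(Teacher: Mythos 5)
Your overall architecture is exactly the paper's: (ii)$\Rightarrow$(iii) via the existence of a simplicial vertex, (iii)$\Rightarrow$(i) by wrapping the interval model onto a circle and replacing each interval of $K$ by its complementary arc through a common point, and (i)$\Rightarrow$(ii) by finding a point $p$ covered by exactly the arcs of $N[s]$ and cutting there. The first two implications are fine as you state them.

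The genuine gap is in (i)$\Rightarrow$(ii): you explicitly leave the production of $p$ unproved, and the mechanism you gesture at (a ``Helly-type transformation'' that locally modifies $\mathcal{A}$ so that $\bigcap_{u\in N[s]}A_u$ meets $A_s$) is not what makes this work and is not something you could carry out without essentially reproving the key lemma. The correct observation is much more direct: a \emph{normalized} model is defined so that $A(v_2)\subseteq A(v_1)$ whenever $N[v_2]\subseteq N[v_1]$, and since $s$ is simplicial we have $N[s]\subseteq N[x]$ for every $x\in N(s)$; hence $A(s)\subseteq A(x)$ for all such $x$, and \emph{any} interior point of $A(s)$ is covered by all arcs of $N[s]$ and by no arc of a non-neighbor (this is Lemma~\ref{lem:simplicial-vertices}). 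No modification of the model is needed, and your worry that pairwise intersection does not force a common point is dissolved by containment rather than by a Helly argument. A second, smaller omission: after cutting at $p$ you assert the flipped arcs form an interval model of $G^{N[s]}$ ``by the very definition of $G^K$,'' but for two vertices $u,v\in N[s]$ the flipped intervals are disjoint iff $A(u)\cup A(v)$ covers the circle, and matching this with the $G^K$-adjacency condition $N(u)\cup N(v)=V(G)$ requires the \emph{second} normalization property (arcs of such pairs must cover the circle); likewise the biconditional $A(u)\subseteq A(v)\Leftrightarrow N[u]\subseteq N[v]$ for cross pairs uses the first. These verifications are the content of Lemma~\ref{lem:construction} and cannot be waved away as definitional, even though they are routine once the normalization properties are invoked.
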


We will use Theorem~\ref{thm:correlation} to derive a full characterization of minimal chordal graphs that are not circular-arc graphs.
In the present paper, we focus on a subclass of chordal graphs.
The rest will be left to a sequel paper. 
With few exceptions, most minimal chordal graphs that are not circular-arc graphs are closely related to the graphs in Figure~\ref{fig:split-non-helly} and the gadgets derived from them.

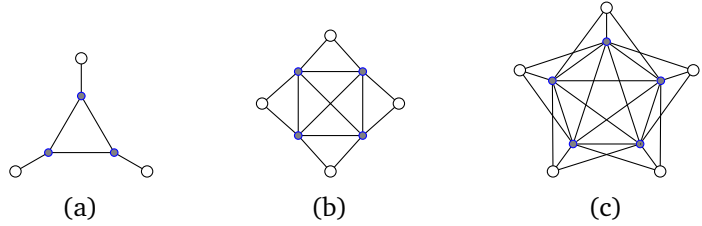
\begin{figure}[h]
  \centering \small
  \begin{subfigure}[b]{0.18\linewidth}
    \centering
    \begin{tikzpicture}[scale=.5]
      \foreach[count =\j] \i in {1, 2, 3} 
        \draw ({120*\i-30}:2) -- ({120*\i-30}:1) -- ({120*\i+90}:1);
        \foreach[count =\j] \i in {1, 2, 3} {
          \node[empty vertex] (u\i) at ({120*\i-30}:2) {};
          \node[filled vertex] (v\i) at ({120*\i-30}:1) {};
      }
    \end{tikzpicture}
    \caption{}
  \end{subfigure}
  \,
  \begin{subfigure}[b]{0.2\linewidth}
    \centering
    \begin{tikzpicture}[scale=.6]
      \def\n{4}
      \def\radius{1.5}
      \foreach \i in {1, ..., \n}
      \draw ({(\i - .5) * (360 / \n)}:1) -- ({(\i + .5) * (360 / \n)}:1) -- ({(\i) * (360 / \n)}:1.5) -- ({(\i - .5) * (360 / \n)}:1);
      \foreach \i in {1, ..., \n} {
        \node[empty vertex] (u\i) at ({(\i) * (360 / \n)}:1.5) {};
        \node[filled vertex] (v\i) at ({(\i - .5) * (360 / \n)}:1) {};
      }
      \draw (v1) -- (v3) (v2) -- (v4);
    \end{tikzpicture}
    \caption{}
  \end{subfigure}
  \quad
  \begin{subfigure}[b]{0.2\linewidth}
    \centering
    \begin{tikzpicture}[scale=.5]
      \def\n{5}
      \def\radius{1.5}      
      \foreach \i in {1,..., \n} {
        \pgfmathsetmacro{\angle}{90 - (\i) * (360 / \n)}
        \foreach \j in {-1, 0, 1}
        \draw (\angle:{\radius*1.6}) -- ({90 - (\i + \j) * (360 / \n)}:{\radius});
      }
      \foreach \i in {1,..., \n} {
        \pgfmathsetmacro{\angle}{90 - (\i) * (360 / \n)}
        \node[filled vertex] (v\i) at (\angle:\radius) {};
        \node[empty vertex] (u\i) at (\angle:{\radius*1.6}) {};
        \pgfmathsetmacro{\p}{\i - 1}        
        \foreach \j in {1, 2, ..., \p}
        \ifthenelse{\i>1}{\draw (v\i) -- (v\j)}{};
      }
    \end{tikzpicture}
    \caption{}
  \end{subfigure}
  \caption{The complements of~$k$-suns, for~$k = 3, 4, 5$.}
  \label{fig:split-non-helly}
\end{figure}

To describe our results, we need to mention an important subclass of circular-arc graphs.
A graph is a \emph{Helly circular-arc graph} if it admits a \emph{Helly circular-arc model}, where the arcs for every maximal clique have a shared point.
In Figure~\ref{fig:normal-and-helly}, e.g., the first model is Helly, and the second is not.
All interval models are Helly, and hence all interval graphs are Helly circular-arc graphs.
The characterization of Helly circular-arc graphs by forbidden induced subgraphs is also unknown, even restricted to chordal graphs.
We do know all minimal chordal circular-arc graphs that are not Helly circular-arc graphs.
For~$k \ge 2$, the \emph{$k$-sun}, denoted as~$S_{k}$, is the graph obtained from the cycle of length~${2 k}$ by adding all edges among the even-numbered vertices to make them a clique.  For example, $S_{3}$ and~$S_{4}$ are depicted in Figures~\ref{fig:normal-and-helly}a and \ref{fig:split-non-helly}b (note that the complement of $S_{4}$ is isomorphic to $S_{4}$).
Joeris et al.~\cite{joeris-11-hcag} proved that a chordal circular-arc graph is a Helly circular-arc graph if and only if it does not contain an induced copy of the complement of any $k$-sun, $k \ge 3$.
Interestingly, these forbidden induced subgraphs are all \emph{split graphs}, whose vertex sets can be partitioned into a clique and an independent set.\footnote{For~$k \ge 3$, the complement of~$S_{k}$ can be obtained by removing a Hamiltonian cycle from the complete split graph with~$k$ vertices on either side, in which all the edges between the clique and the independent set are present.}
The striking simplicity of split graphs may lead one to consider them as the ``simplest chordal graphs.''
However, Spinrad~\cite{spinrad-03-efficient-graph-representations} observed that ``\textit{split graphs ... often seem to be at the core of algorithms and proofs of difficulty for chordal graphs.}''
Indeed, the chordal forbidden induced subgraphs of the class of circular-arc graphs are natural generalizations of those within split graphs.

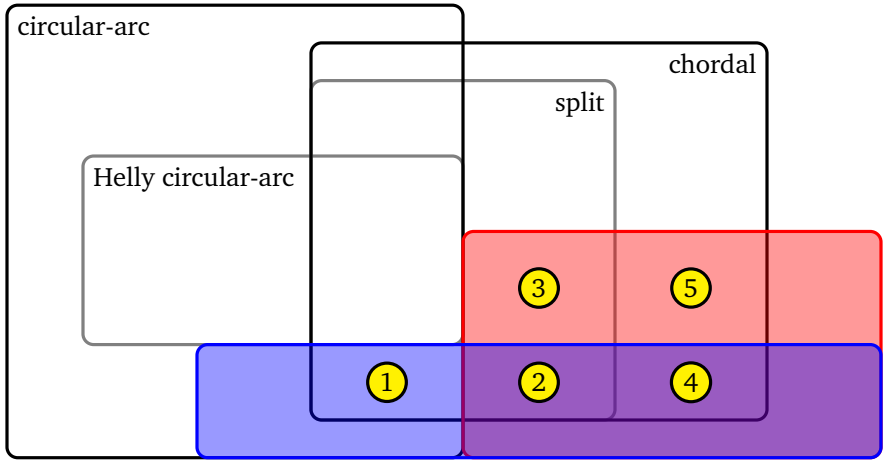
\begin{figure}[h]
  \centering \small

  \def\cag{(1, 0) rectangle ++(6, 6)}
  \def\hcag{(2, 1.5) rectangle ++(4.5, 4)}
  \def\chordal{(5, 0.5) rectangle ++(6, 4.5)}
  \def\split{(5, .5) rectangle ++(4, 4)}

  \def\firstregion{(3.5, 0) rectangle ++(9, 1.5)}  
  \def\secondregion{(7, 0) rectangle ++(5.5, 3)}  
  
  \begin{tikzpicture}[very thick,  text opacity=1]
    \draw[rounded corners, gray]\split;
    \draw[rounded corners, gray] \hcag;
    \draw[rounded corners]\chordal;
    \draw[rounded corners]\cag; 
    \begin{scope}[very thick, fill opacity=0.4, text opacity=1]
    \filldraw[red, rounded corners] \secondregion;
    \filldraw[blue, rounded corners] \firstregion;
    \end{scope}
    \path
    (11, 5) node[anchor=north east] {chordal}
    (9, 4.5) node[anchor=north east] {split}
    (1, 6) node[anchor=north west] {circular-arc}
    (2, 5.5) node[anchor=north west] {Helly circular-arc}
    (6, 1.) node[circle, draw, fill=yellow, inner sep=2pt] {1}
    (8, 1.) node[circle, draw, fill=yellow, inner sep=2pt] {2}
    (10, 1.) node[circle, draw, fill=yellow, inner sep=2pt] {4}
    (8, 2.25) node[circle, draw, fill=yellow, inner sep=2pt] {3}
    (10, 2.25) node[circle, draw, fill=yellow, inner sep=2pt] {5}
    ;
  \end{tikzpicture}
  \caption{The Venn diagram of the four graph classes.  The blue and red areas consists of \textit{minimal} forbidden induced subgraphs of the class of Helly circular-arc graphs and the class of circular-arc graphs, respectively.}
  \label{fig:venn-diagram}
\end{figure}

We use the Venn diagram in Figure~\ref{fig:venn-diagram} to illustrate the relationship of the four classes.  Regions 1, 2, and 4 together are the minimal chordal graphs that are not Helly circular-arc graphs, while regions 2--5 together are the minimal chordal graphs that are not circular-arc graphs.
The corresponding ones for split graphs are regions 1 and 2 and, respectively, regions 2 and 3.
Note that every graph in regions 3 and 5 contains a graph in region 1 as an induced subgraph.
As said, only region 1 has been fully understood~\cite{joeris-11-hcag}.

\begin{figure}[h]
  \centering \small
  \begin{subfigure}[b]{0.15\linewidth}
    \centering
    \begin{tikzpicture}[scale=.5]
      \foreach[count =\j] \i in {1, 2, 3} 
        \draw ({120*\i-30}:2) -- ({120*\i-30}:1) -- ({120*\i+90}:1);
        \foreach[count =\j] \i in {1, 2, 3} {
          \node[empty vertex] (u\i) at ({120*\i-30}:2) {};
          \node[filled vertex] (v\i) at ({120*\i-30}:1) {};
      }
    \end{tikzpicture}
    \caption{net}\label{fig:net}    
  \end{subfigure}
  \,
  \begin{subfigure}[b]{0.15\linewidth}
    \centering
    \begin{tikzpicture}[scale=.5]
      \foreach[count =\j] \i in {1, 2, 3} 
        \draw ({120*\i-90}:1) -- ({120*\i+30}:1) -- ({120*\i-30}:2) -- ({120*\i-90}:1);
        \foreach[count =\j] \i in {1, 2, 3} {
          \node[empty vertex] (u\i) at ({120*\i-30}:2) {};
        \node[filled vertex] (v\i) at ({120*\i-90}:1) {};
      }
    \end{tikzpicture}
    \caption{sun}\label{fig:sun}    
  \end{subfigure}
  \,
  \begin{subfigure}[b]{0.16\linewidth}
    \centering
    \begin{tikzpicture}[scale=.5]
      \node[empty vertex](a) at (0, 3) {};
      \draw (-1.8, 0) -- (1.8, 0);
      \foreach[count=\i] \x/\sub in {-1/1, 1/2} {
        \node[filled vertex]
        (b\i) at ({0.9*\x}, 1.5) {};
        \draw (a) -- (b\i);
        \foreach[count=\j]\t in {filled , empty } {
          \node[\t vertex](v\i\j) at ({0.9*\x*\j}, 0) {};
          \draw (b\i) -- (v\i\j);
        }
      }
      \draw (b1) -- (b2);
      \draw (b1) -- (v21) (b2) -- (v11);
    \end{tikzpicture}
    \caption{rising sun}\label{fig:rising-sun}    
  \end{subfigure}
  \caption{Minimal split graphs that are not interval graphs.}
  \label{fig:split-non-interval}
\end{figure}
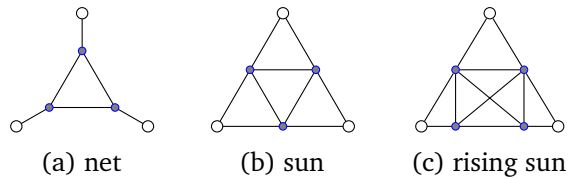

In the present paper, we focus on split graphs, i.e., regions 1--3.
Since each graph in them is not a Helly circular-arc graph, it is not an interval graph.
Thus, it contains a graph in Figure~\ref{fig:non-interval}.
Only three of them are split graphs, which are reproduced and named in Figure~\ref{fig:split-non-interval}.
The \emph{net} is a $\dag$ graph with six vertices, the \emph{sun} (i.e., $3$-sun) and the \emph{rising sun} are $\ddag$ graphs with six and seven vertices, respectively.
They are all circular-arc graphs.
We leave it to the reader to verify that the sun and the rising sun are Helly circular-arc graphs while the net is not.
Indeed, the net is the complement of the sun, hence in region 1.
Note that an $\overline{S_{4}}$ contains an induced rising sun, and an $\overline{S_{i}}, i \ge 5$, contains an induced sun.

For split graphs, Theorem~\ref{thm:correlation} can be simplified.
\begin{theorem}\label{thm:main-split}
  Let $G$ be a split graph with a split partition~$K\uplus S$.
  \begin{itemize}
  \item $G$ is a circular-arc graph if and only if
  there exists $s\in S$ such that
  $G^{N[s]}$ admits an interval model in which no interval for a vertex in~$N(s)$ contains an interval for a vertex in~$K\setminus N(s)$.
\item $G$ is a Helly circular-arc graph if and only if  $G^{K}$ is an interval graph.
  \end{itemize}
\end{theorem}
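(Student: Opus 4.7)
The plan is to specialize Theorem~\ref{thm:correlation} using the key observation that every vertex of $S$ is simplicial in $G$: since $N(s) \subseteq K$ and $K$ is a clique, $N[s]$ is a clique. Hence Theorem~\ref{thm:correlation}(ii) applies to every $s \in S$, and Theorem~\ref{thm:correlation}(iii) can be established with the clique $K'$ chosen as $N[s]$ (for the first bullet) or as $K$ itself (for the second).

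For the first bullet, the forward direction is nearly immediate: if $G$ is a circular-arc graph, then for any $s \in S$, Theorem~\ref{thm:correlation}(ii) gives an interval model of $G^{N[s]}$ satisfying \eqref{eq:1}, and for any $v \in N(s)$ and $u \in K \setminus N(s)$---both in the clique $K$, hence $uv \in E(G)$---condition \eqref{eq:1} forces $I_v \not\supseteq I_u$. The backward direction is the substantive work: starting from an interval model $\mathcal{I}$ of $G^{N[s]}$ with only the stated non-containment property, I need to modify $\mathcal{I}$ into one satisfying the full \eqref{eq:1}, and then invoke Theorem~\ref{thm:correlation}(iii). Partitioning the four cases of \eqref{eq:1} by whether $u$ lies in $K \setminus N(s)$ or in $S \setminus \{s\}$ and whether $v$ equals $s$ or lies in $N(s)$, one case is automatic: for $v \in N(s)$ and $u \in S \setminus \{s\}$ with $uv \in E(G)$, the flip makes the pair non-adjacent in $G^{N[s]}$, so $I_v \cap I_u = \emptyset$. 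The remaining requirements beyond the given non-containment are (a) $I_s \supseteq I_u$ for every $u \in V(G) \setminus N[s]$, and (b) $I_v \supseteq I_u$ for $v \in N(s)$ and $u \in S \setminus \{s\}$ with $uv \notin E(G)$. In both (a) and (b) the relevant intervals are already known to intersect (by the flip), and the task is to promote overlap to containment. My plan is to exploit the rigidity of interval models (e.g., via the PQ-tree of maximal cliques) together with the simplicial structure of the vertices of $S$ to show that any offending interleaving can be eliminated by locally extending or shifting endpoints, with the given non-containment between $N(s)$ and $K \setminus N(s)$ guaranteeing that this process creates no spurious intersection. This is the main obstacle of the proof.

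For the second bullet, the forward direction is standard: in a Helly circular-arc model of $G$, the arcs of the clique $K$ share a common point $p$, so cutting the circle at $p$ and flipping the $K$-arcs yields an interval model of $G^K$. For the backward direction, given an interval model of $G^K$, I would first modify it, using the strategy above applied now to $K' = K$, so that for every $v \in K$ and $u \in S$ the interval $I_v$ either contains $I_u$ or is disjoint from it; reverse-flipping then produces a circular-arc model of $G$. Helly-ness follows by inspecting the maximal cliques of $G$, which in a split graph are either $K$ itself (whose reverse-flipped arcs share the cut point) or $\{u\} \cup (K \cap N(u))$ for some $u \in S$, and for the latter the Helly property of intervals on the real line supplies a shared point that survives the reverse flip.
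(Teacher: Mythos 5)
Your reduction to Theorem~\ref{thm:correlation} and both forward directions are fine, but the step you defer as ``the main obstacle'' is in fact the entire content of the first bullet, and the plan you sketch for it is unlikely to work as stated. Promoting ``overlap'' to ``containment'' by locally extending or shifting endpoints is dangerous: enlarging $I_s$ or $I_v$ can create spurious intersections with intervals of vertices of $N(s)$ that are non-adjacent to $s$ (resp.\ to $v$) in $G^{N[s]}$, and there are configurations where no local fix of the \emph{given} model exists, so one must build a new model rather than perturb the old one. The paper closes this gap with Proposition~\ref{lem:clique-path}: from the hypothesized model one extracts its clique path, observes that the non-containment between $N(s)$ and $K\setminus N(s)$ is exactly condition~\eqref{eq:2} ($N_H(v)\cap K_o\subseteq K_{\lk(v)}\cup K_{\rk(v)}$ for $v\in K_s$), and then constructs a \emph{fresh} interval model from the clique path in which each vertex of $K_o\cup\{s\}$ gets the interval $[\lk(x)-\tfrac13,\rk(x)+\tfrac13]$ and every other vertex gets $[\lk(x),\rk(x)]$. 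The two structural facts that make this work, and that are absent from your sketch, are: (i) every vertex of $S\setminus\{s\}$ is simplicial in $G^{N[s]}$ (Proposition~\ref{lem:trivial}), hence lies in a single maximal clique and receives a degenerate interval automatically contained in all its neighbours' intervals --- this disposes of your requirement (b); and (ii) after discarding universal vertices of $G$, the vertex $s$ is universal in $G^{N[s]}$, so its extended interval contains everything --- this disposes of (a). With these, there is no ``offending interleaving'' left to eliminate and no PQ-tree rigidity argument is needed; without them, your proposal has no proof of the substantive direction.

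Your second bullet inherits the same gap, since its backward direction invokes ``the strategy above.'' It is worth noting that the paper's proof of that bullet (Theorem~\ref{thm:helly-main}) avoids model surgery altogether: for sufficiency it adjoins an apex vertex $s$ adjacent to all of $K$, notes that in $(G')^{K\cup\{s\}}$ every vertex of $S$ is simplicial so condition~\eqref{eq:1} is obtained for free, concludes that $G'$ is a circular-arc graph by Theorem~\ref{thm:correlation}, and then upgrades to Helly via ambiguity (Theorem~\ref{lem:split-cag-and-hcag}) rather than by inspecting maximal cliques of a constructed model. Your direct verification of Helly-ness would be acceptable if the modification step existed, but as a smaller point your forward direction also glosses over the hypotheses of Lemma~\ref{lem:construction}: one needs a \emph{normalized} model and a point covered by and \emph{only} by the arcs of $K$ (when $K$ is not maximal, the common point of the maximal clique $N[x]$, $x\in S$, is also covered by $A(x)$, and the paper explicitly moves to a point just past an endpoint of $A(x)$).
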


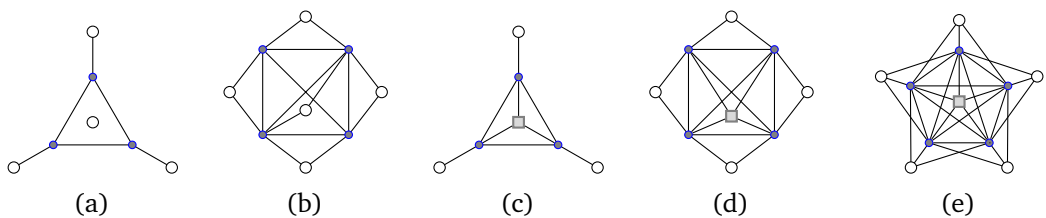
\begin{figure}[h]
  \centering \small
  \begin{subfigure}[b]{0.16\linewidth}
    \centering
    \begin{tikzpicture}[scale=.6]
      \node[empty vertex] (c) at (0, 0) {};
      \foreach[count =\j] \i in {1, 2, 3} 
      \draw ({120*\i-30}:2) -- ({120*\i-30}:1) -- ({120*\i+90}:1);
      \foreach[count =\j] \i in {1, 2, 3} {
        \node[empty vertex] (u\i) at ({120*\i-30}:2) {};
        \node[filled vertex] (v\i) at ({120*\i-30}:1) {};
      }
    \end{tikzpicture}
    \caption{}
    \label{fig:net-star}
  \end{subfigure}
  \,
  \begin{subfigure}[b]{0.16\linewidth}
    \centering
    \begin{tikzpicture}[scale=.8]
      \node[empty vertex] (c) at (0, -0.3) {};

      \def\n{4}
      \def\radius{1.25}
      
      \foreach \i in {1, ..., \n}
      \draw ({(\i - .5) * (360 / \n)}:1) -- ({(\i + .5) * (360 / \n)}:1) -- ({(\i) * (360 / \n)}:\radius) -- ({(\i - .5) * (360 / \n)}:1);
      \foreach \i in {1, ..., \n} {
        \node[empty vertex] (u\i) at ({(\i) * (360 / \n)}:\radius) {};
        \node[filled vertex] (v\i) at ({(\i - .5) * (360 / \n)}:1) {};
      }
      \foreach \i in {1, 3} \draw (c) -- (v\i);
      \draw (v1) -- (v3) (v2) -- (v4);
    \end{tikzpicture}
    \caption{}
    \label{fig:the-weird}
  \end{subfigure}
  \,
  \begin{subfigure}[b]{0.16\linewidth}
    \centering
    \begin{tikzpicture}[scale=.6]
      \node[corner] (c) at (0, 0) {};
      \foreach[count =\j] \i in {1, 2, 3} 
      \draw ({120*\i-30}:2) -- ({120*\i-30}:1) -- ({120*\i+90}:1);
      \foreach[count =\j] \i in {1, 2, 3} {
        \node[empty vertex] (u\i) at ({120*\i-30}:2) {};
        \node[filled vertex] (v\i) at ({120*\i-30}:1) {};
        \draw (c) -- (v\i);
      }
    \end{tikzpicture}
    \caption{}
  \end{subfigure}
  \,
  \begin{subfigure}[b]{0.16\linewidth}
    \centering
    \begin{tikzpicture}[scale=.8]
      \node[corner] (c) at (0, -0.4) {};

      \def\n{4}
      \def\radius{1.5}
      
      \foreach \i in {1, ..., \n}
      \draw ({(\i - .5) * (360 / \n)}:1) -- ({(\i + .5) * (360 / \n)}:1) -- ({(\i) * (360 / \n)}:1.25) -- ({(\i - .5) * (360 / \n)}:1);
      \foreach \i in {1, ..., \n} {
        \node[empty vertex] (u\i) at ({(\i) * (360 / \n)}:1.25) {};
        \node[filled vertex] (v\i) at ({(\i - .5) * (360 / \n)}:1) {};
        \draw (c) -- (v\i);
      }
      \draw (v1) -- (v3) (v2) -- (v4);
    \end{tikzpicture}
    \caption{}
  \end{subfigure}
  \quad
  \begin{subfigure}[b]{0.16\linewidth}
    \centering
    \begin{tikzpicture}[scale=.45]
      \node[corner] (c) at (0, 0) {};

      \def\n{5}
      \def\radius{1.5}      
      \foreach \i in {1,..., \n} {
        \pgfmathsetmacro{\angle}{90 - (\i) * (360 / \n)}
        \foreach \j in {-1, 0, 1}
        \draw (\angle:{\radius*1.6}) -- ({90 - (\i + \j) * (360 / \n)}:{\radius});
      }
      \foreach \i[evaluate={\p=int(\i - 1);}] in {1,..., \n} {
        \pgfmathsetmacro{\angle}{90 - (\i) * (360 / \n)}
        \node[filled vertex] (v\i) at (\angle:\radius) {};
        \node[empty vertex] (u\i) at (\angle:{\radius*1.6}) {};
        \foreach \j in {1, 2, ..., \p}
        \ifthenelse{\i>1}{\draw (v\i) -- (v\j)}{};
        \draw (c) -- (v\i);
      }
    \end{tikzpicture}
    \caption{}
  \end{subfigure}
  \caption{Minimal split graphs that are not circular-arc graphs.
    In (c--e), the square node can be from~$K$ or~$S$.}
  \label{fig:split-non-cag}
\end{figure}

\begin{figure}[h]
  \centering \small
  \begin{subfigure}[b]{0.2\linewidth}
    \centering
    \begin{tikzpicture}[scale=.6]
      \node[filled vertex] (c) at (0, 0) {};
      \foreach \i in {1, 2, 3} {
        \foreach[count=\j] \x in {0, 1} {
          \node[empty vertex](u\i\j) at ({120*\i - 30}:{1.5+\x/2}) {};
          \draw ({120*\i-90}:1) -- (u\i\j) -- ({120*\i+30}:1);
          \ifthenelse{\x=0}{\draw (u\i\j) -- (c)}{};
        }
          \draw ({120*\i-90}:1) -- ({120*\i+30}:1);
      }

      
      \foreach \i in {1, 2, 3} {
        \node[filled vertex](v\i) at ({120*\i-90}:1) {};
        \draw (v\i) -- (c);
      }
    \end{tikzpicture}
    \caption{}
    \label{fig:long-claw-derived}
  \end{subfigure}
  \,
  \begin{subfigure}[b]{0.2\linewidth}
    \centering
    \begin{tikzpicture}[scale=.6]
      \node[filled vertex] (c) at (0, 0) {};
      \foreach \i/\l/\p in {1/1/, 2/6/below, 3/2/below} {
          \node[empty vertex] (u\i) at ({120*\i-30}:2) {};
          \draw ({120*\i-90}:1) -- (u\i) -- ({120*\i+30}:1);
          \draw ({120*\i-90}:1) -- ({120*\i+30}:1);
      }
      \foreach[count =\j from 3] \i/\p in {1/, 2/below, 3/} {
        \node[filled vertex] (v\i) at ({120*\i+30}:1) {};
        \draw (v\i) -- (c);
      }
      \draw (u1) -- (c);
      \foreach \i/\x/\l in {1/-1/457, 3/1/347} {
        \draw (v\i) -- ({90 - 60*\x}:2) node[empty vertex] {};
      }
    \end{tikzpicture}
    \caption{}
    \label{fig:whipping-top-derived}
  \end{subfigure}

  \begin{subfigure}[b]{0.2\linewidth}
    \centering
    \begin{tikzpicture}[scale=.6]
      \draw[draw = gray] (0, .75) ellipse [x radius=1.8, y radius=0.85];
      \node[empty vertex] at (90:1.2) {};
      \foreach[count =\j] \i in {1, 2, 3} 
        \draw ({120*\i-90}:1) -- ({120*\i+30}:1) -- ({120*\i-30}:2) -- ({120*\i-90}:1);
        \foreach \i in {1, 2, 3} {
          \node[empty vertex] (u\i) at ({120*\i-30}:2) {};
          \node[filled vertex] (v\i) at ({120*\i-90}:1) {};
        }
        \foreach[count =\j] \i in {2, 1}
      \node at ({140*\j-120}:1.4) {$v_{\i}$};
    \end{tikzpicture}
    \caption{}
  \end{subfigure}
  \,
  \begin{subfigure}[b]{0.2\linewidth}
    \centering
    \begin{tikzpicture}[scale=.6]
      \draw[draw = gray] (0, .75) ellipse [x radius=1.8, y radius=0.9];
      \node[filled vertex] (c) at (0, 0) {};
      \foreach \i/\l/\p in {1/1/, 2/6/below, 3/2/below} {
          \node[empty vertex] (u\i) at ({120*\i-30}:2) {};
          \draw ({120*\i-90}:1) -- (u\i) -- ({120*\i+30}:1);
          \draw ({120*\i-90}:1) -- ({120*\i+30}:1);
      }
      \foreach[count =\j from 3] \i/\p in {1/, 2/below, 3/} {
        \node[filled vertex] (v\i) at ({120*\i+30}:1) {};
        \draw (v\i) -- (c);
        \draw (u\i) -- (c);
      }
      \foreach \i/\x/\l in {1/-1/457, 3/1/347} {
        \draw (v\i) -- ({90 - 40*\x}:1.5) node[empty vertex] {};
      }
      \foreach[count =\j] \i in {3, 1}
      \node at ({140*\j-120}:1.4) {$v_{\i}$};
    \end{tikzpicture}
    \caption{}
  \end{subfigure}
  \,
  \begin{subfigure}[b]{0.3\linewidth}
    \centering
    \begin{tikzpicture}[xscale=.8, yscale=.6]
      \draw[draw = gray] (0, .8) ellipse [x radius=1.7, y radius=0.9];
      \foreach[count =\j] \i in {1, 2, 3} 
        \draw ({120*\i-90}:1) -- ({120*\i+30}:1) -- ({120*\i-30}:2) -- ({120*\i-90}:1);
        \foreach \i in {1, 2, 3} {
          \node[empty vertex] (u\i) at ({120*\i-30}:2) {};
          \node[filled vertex] (v\i) at ({120*\i-90}:1) {};
      }

    \foreach[count=\i] \x in {1, -1} {
      \node[filled vertex](x\i) at ({\x/2.5}, .05) {};
      \foreach \k in {1, 2, 3}  \draw (v\k) -- (x\i) -- (u\k);
    }
    \draw (x1) -- (x2);
    \foreach \i/\x in {1/1, 2/-1} {
      \draw (v\i) -- ({90 - 40*\x}:1.5) node[empty vertex] {} -- (x\i);
    }
    \draw (v1) -- (90:1.2) node[empty vertex] {} -- (v2);

      \foreach[count =\j] \i in {4, 1}
      \node at ({140*\j-120}:1.3) {$v_{\i}$};
    \end{tikzpicture}
    \caption{}
  \end{subfigure}

  \begin{subfigure}[b]{0.2\linewidth}
    \centering
    \begin{tikzpicture}[scale=.6]
      \draw[draw = gray] (0, 1.6) ellipse [x radius=1.8, y radius=0.75];
      \node[empty vertex] at (90:2.1) {};

      \node[empty vertex](a) at (0, 3) {};
      \draw (-1.8, 0) -- (1.8, 0);
      \foreach[count=\i] \x/\sub in {-1/1, 1/2} {
        \node[filled vertex, label={[xshift={\x*0.3cm}, yshift=-0.3cm]$v_{\sub}$}]
        (b\i) at ({0.9*\x}, 1.5) {};
        \draw (a) -- (b\i);
        \foreach[count=\j]\t in {filled , empty } {
          \node[\t vertex](v\i\j) at ({0.9*\x*\j}, 0) {};
          \draw (b\i) -- (v\i\j);
        }
      }
      \draw (b1) -- (b2);
      \draw (b1) -- (v21) (b2) -- (v11);
    \end{tikzpicture}
    \caption{}
  \end{subfigure}
  \,
  \begin{subfigure}[b]{0.2\linewidth}
    \centering
    \begin{tikzpicture}[scale=.6]
      \draw[draw = gray] (0, 1.7) ellipse [x radius=1.8, y radius=0.95];

      \node[empty vertex] (a3) at (0, 3) {};
      \draw (-1.8, 0) -- (1.8, 0);
      \draw (a3) -- (0, 1) node[filled vertex] (c) {};
      \foreach[count=\i] \x/\sub in {-1/1, 1/3} {
        \node[filled vertex, label={[xshift={\x*0.3cm}, yshift=-0.3cm]$v_{\sub}$}]
        (b\i) at ({0.9*\x}, 1.5) {};
        \draw (b\i) --  ({0.9*\x}, 2.2) node[empty vertex] {};
        \draw (a3) -- (b\i) -- (c);
        \foreach[count=\j]\t in {filled , empty } {
          \node[\t vertex](v\i\j) at ({0.9*\x*\j}, 0) {};
          \draw (b\i) -- (v\i\j) -- (c);
        }
      }
      \draw (b1) -- (b2);
      \draw (b1) -- (v21) (b2) -- (v11);
    \end{tikzpicture}
    \caption{}
  \end{subfigure}
  \,
  \begin{subfigure}[b]{0.3\linewidth}
    \centering
    \begin{tikzpicture}[xscale=.8, yscale=.6]
      \draw[draw = gray] (0, 1.75) ellipse [x radius=1.7, y radius=0.85];

      \node[empty vertex](a3) at (0, 3) {};
      \draw (-1.8, 0) -- (1.8, 0);
      \foreach[count=\i] \x/\sub in {-1/1, 1/4} {
        \node[filled vertex, label={[xshift={\x*0.3cm}, yshift=-0.3cm]$v_{\sub}$}]
        (b\i) at ({0.9*\x}, 1.5) {};
        \draw (a3) -- (b\i);
        \foreach[count=\j]\l/\t in {c/filled , a/empty } {
          \node[\t vertex](\l\i) at ({0.9*\x*\j}, 0) {};
          \draw (b\i) -- (\l\i);
        }
      }
      \draw (b1) -- (b2);
      \draw (b1) -- (c2) (b2) -- (c1);

    \foreach[count=\i] \x in {-1, 1} {
      \node[filled vertex](x\i) at ({\x/3}, 1.2) {};
      \foreach \k in {1, 2, 3}  \draw (a\k) -- (x\i);
      \foreach \k in {1, 2}  \draw (b\k) -- (x\i) -- (c\k);
    }
    \draw (x1) -- (x2);
    \foreach \i/\x in {1/-1, 2/1} {
      \draw (b\i) -- ({0.9*\x}, 2.2) node[empty vertex] {} -- (x\i);
    }
    \draw (b1) -- (90:2.2) node[empty vertex] {} -- (b2);
    \end{tikzpicture}
    \caption{}
  \end{subfigure}
\caption{Minimal split graphs that are not circular-arc graphs (region 2).  The graph~$G^{K}$, where $K$ comprises the solid nodes, are (a) long claw, (b) whipping top, (c--e) $\dag$ graphs on six, seven, and eight vertices, and (f--h) $\ddag$ graphs on seven, eight, and nine vertices.  For (c--h), inside the ellipse is
  a subgraph of~$\overline{S_{k}}, k = 2, 3, 4$, obtained by removing one simplicial vertex.
}
  \label{fig:split-non-helly-2}
\end{figure}
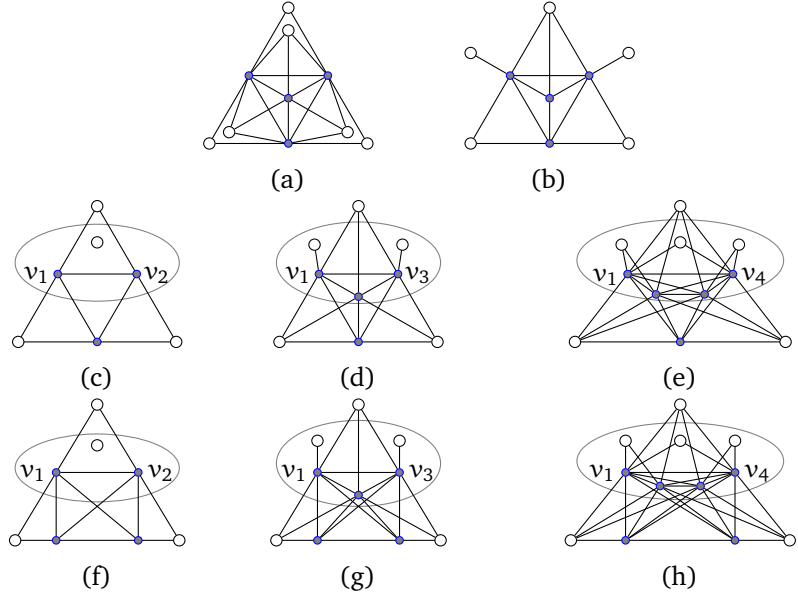

Theorem~\ref{thm:main-split} enables us to fully characterize regions 2 and 3.
For a split graph~$G$ with a unique split partition~$K\uplus S$, we use $G^+$ to denote the graph obtained from~$G$ by adding a vertex adjacent to all the vertices in~$K$.
Note that $\overline{S_{k}^{+}} = \left(\overline{S_{k}}\right)^{+}$, and the first three of them are shown as Figure~\ref{fig:split-non-cag}(c--e).
We also introduce two families of graphs~$S^{1}_{k}$ and~$S^{2}_{k}$ for each~$k\ge 2$, which will be defined in Section~\ref{sec:hcag}.  The first three of each family are shown in Figure~\ref{fig:split-non-helly-2}.

\begin{theorem}\label{thm:main}
  Region 2 comprises the graph in Figure~\ref{fig:long-claw-derived}, the graph in Figure~\ref{fig:whipping-top-derived},
 and~$S^{1}_{k}$, $S^{2}_{k}, k \ge 2$.
 Region 3 comprises the graph in Figure~\ref{fig:net-star}, the graph in Figure~\ref{fig:the-weird}, and~$\overline{S_{k}^{+}}, k \ge 3$.
\end{theorem}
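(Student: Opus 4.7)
The plan is to derive Theorem~\ref{thm:main} from Theorem~\ref{thm:main-split} together with Lekkerkerker and Boland's characterization of interval graphs (Theorem~\ref{thm:lb}). For a split graph~$G$ with split partition $K\uplus S$, the graph $G^K$ can be described directly from $G$'s adjacencies by an ``unflipping'' dictionary: $S$ remains independent, edges between $K$ and $S$ get complemented in a controlled way, and the induced subgraph on $K$ is determined by which pairs of $K$-arcs cover the circle in a normalized model. I would establish this dictionary first, so that every Lekkerkerker--Boland substructure inside $G^K$ or $G^{N[s]}$ translates into an explicit induced substructure of $G$.

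For Region~3, Theorem~\ref{thm:main-split} says that $G^K$ is not an interval graph, so by Theorem~\ref{thm:lb} it contains an induced hole, long claw, whipping top, $\dag$, or $\ddag$. A hole in $G^K$ unflips to a copy of $\overline{S_k}$ ($k\ge 3$) inside~$G$, and minimality then forces $G=\overline{S_k^+}$, the ``$+$'' vertex being the extra clique vertex needed for the hole to appear after flipping. A six-vertex $\dag$ in $G^K$ unflips to the net-star of Figure~\ref{fig:net-star}; a seven-vertex $\dag$ to the graph of Figure~\ref{fig:the-weird}. The remaining shapes (long claw, whipping top, and larger $\dag/\ddag$) are ruled out by showing that any such occurrence either forces $G$ to contain a smaller member of the list as an induced subgraph, or places $G$ inside Region~2 rather than Region~3, contradicting that $G$ is a circular-arc graph.

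For Region~2, Theorem~\ref{thm:main-split} says that $G$ fails to be a circular-arc graph iff, for every $s\in S$, either $G^{N[s]}$ is not an interval graph or every interval model of it contains a forbidden $N(s)$--$(K\setminus N(s))$ containment. The families $S^1_k$ and $S^2_k$ are built precisely so that, for every $s$ in the independent side, $G^{N[s]}$ reproduces a $\dag$ or $\ddag$ whose ``inner'' subgraph is an $\overline{S_k}$ with one simplicial vertex removed (the subgraph inside the ellipses of Figure~\ref{fig:split-non-helly-2}). The long-claw-derived and whipping-top-derived graphs of Figure~\ref{fig:long-claw-derived} and Figure~\ref{fig:whipping-top-derived} realize the remaining two Lekkerkerker--Boland obstructions. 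Containment-only failures (where $G^{N[s]}$ is an interval graph but no model avoids the forbidden containment) are handled by a PQ-tree analysis showing that such a failure already embeds $\overline{S_3}$ into $G$, which is accounted for by the families above.

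The main obstacle is completeness. The forward direction is a finite or inductive check for each listed graph: exhibit the appropriate obstruction and verify that every single-vertex deletion restores either a valid flipping clique (for Region~3) or a valid simplicial vertex $s$ (for Region~2). The reverse direction is harder because the ``for every $s$'' quantifier in the Region~2 criterion allows different $s$ to witness different Lekkerkerker--Boland obstructions. I would handle this by a structural argument showing that minimality trims~$G$ to the point where a common underlying kernel---exactly the $\overline{S_k}$-minus-a-vertex subgraph of $S^1_k$ or $S^2_k$---persists across all choices of~$s$ and pins down the listed families uniquely, with the two sporadic graphs of Figure~\ref{fig:long-claw-derived} and Figure~\ref{fig:whipping-top-derived} covering the small cases where no such kernel exists.
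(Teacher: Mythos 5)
Your Region~2 outline is broadly in the spirit of the paper: there the Helly criterion ($G$ is Helly circular-arc iff $G^{K}$ is interval, Theorem~\ref{thm:helly-main}) is combined with Theorem~\ref{thm:lb}, and the notion of a \emph{witnessed} clique lets one translate each minimal non-interval subgraph of~$G^{K}$ back into an induced subgraph of~$G$ (Propositions~\ref{lem:h-maxclique} and~\ref{lem:simplicial-iff-i-helly}, Lemma~\ref{lem:alternative}). You would still owe that witness bookkeeping, but the skeleton matches.

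The Region~3 part has a genuine gap. The criterion for a split graph to be a circular-arc graph is \emph{not} ``$G^{K}$ is an interval graph'' --- that is the Helly criterion --- but the first bullet of Theorem~\ref{thm:main-split}: $G^{N[s]}$ must admit an interval model avoiding certain containments between intervals for $N(s)$ and for $K\setminus N(s)$. Your plan analyzes holes and $\dag$'s in $G^{K}$ and asserts, for instance, that a six-vertex $\dag$ in $G^{K}$ ``unflips'' to the net$^{\star}$ of Figure~\ref{fig:net-star}; by the paper's correspondence (Lemma~\ref{lem:alternative}) a witnessed six-vertex $\dag$ in $G^{K}$ yields $S^{1}_{2}$, which lives in Region~2, not Region~3. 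The graphs net$^{\star}$ and Figure~\ref{fig:the-weird} arise from an entirely different mechanism: $G^{N[s]}$ \emph{is} an interval graph, but every interval model violates condition~\eqref{eq:1} (the paper notes explicitly that for net$^{\star}$ the graph $H=G^{N[s]}$ is the interval graph of Figure~\ref{fig:configuration-8} plus a universal vertex). You dismiss these ``containment-only failures'' by asserting that a PQ-tree analysis shows they always embed $\overline{S_{3}}$ and are absorbed into the other families; that is false, and classifying exactly these failures is the bulk of the paper's Section~\ref{sec:cag} --- the seven annotated forbidden configurations of Figure~\ref{fig:non-cag-to-interval}, the quasi-prime clique-path analysis (Lemmas~\ref{lem:quasi-prime} and~\ref{lem:split-non-cag-but-interval}), and the long case distinctions of Lemma~\ref{lem:interval-configurations} and Theorem~\ref{thm:h-not-interval}, which must also track how the three-part partition $K_{s}\uplus K_{o}\uplus S$ meets each configuration. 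Relatedly, ``minimality then forces $G=\overline{S_{k}^{+}}$'' is asserted rather than proved: nothing in your argument shows the extra vertex must be adjacent to exactly $K$, and that too only falls out of the configuration analysis. Without this machinery the completeness of the Region~3 list is not established.
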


\begin{corollary}
  A split graph is a Helly circular-arc graph if and only if it does not contain an induced copy of the graphs in Figure~\ref{fig:long-claw-derived}, \ref{fig:whipping-top-derived}, or~$S^{1}_{k}$, $S^{2}_{k}, k \ge 2$.

A split graph is a circular-arc graph if and only if it does not contain an induced copy of the graphs in
Figures~\ref{fig:net-star}, \ref{fig:the-weird}, \ref{fig:long-claw-derived}, \ref{fig:whipping-top-derived}, or~$S^{1}_{k}$, $S^{2}_{k}, \overline{S_{k+1}^{+}}, k \ge 2$.
\end{corollary}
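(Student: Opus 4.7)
The plan is to derive both statements directly from Theorem~\ref{thm:main} together with the standard principle that a hereditary graph class is characterized by its minimal forbidden induced subgraphs. Since the circular-arc and Helly circular-arc classes are both hereditary, and every induced subgraph of a split graph is split, a split graph $G$ lies in such a class $\mathcal{C}$ if and only if $G$ contains no induced copy of a \emph{split} minimal forbidden graph for $\mathcal{C}$. Reading off Figure~\ref{fig:venn-diagram}, the split minimal forbidden graphs for the class of circular-arc graphs are the split members of regions~2 and~3, while those for the class of Helly circular-arc graphs are the split members of regions~1 and~2.

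For the circular-arc statement, Theorem~\ref{thm:main} already enumerates the split graphs in regions~2 and~3 explicitly. A straightforward reindexing---rewriting the region~3 family $\overline{S_k^+}$, $k \ge 3$, as $\overline{S_{k+1}^+}$, $k \ge 2$---matches the stated list verbatim, so the second part of the corollary is immediate.

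For the Helly circular-arc statement, Theorem~\ref{thm:main} again supplies region~2, and the remaining region~1 is handled by an earlier result: Joeris et al.~\cite{joeris-11-hcag} proved that the minimal chordal circular-arc graphs that are not Helly circular-arc are precisely the complements of $k$-suns $\overline{S_k}$, $k \ge 3$, each of which is itself a split graph. Combining the two sources yields the complete list of split minimal forbidden induced subgraphs for Helly CAG.

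The main step beyond citing Theorem~\ref{thm:main}---and hence the only place where real bookkeeping is needed---is reconciling the region~1 contribution with the corollary's wording: one must verify that each $\overline{S_k}$, $k \ge 3$, either appears implicitly in the listed families or contains some $S^1_\ell$ or $S^2_\ell$ (or one of the two sporadic graphs) as an induced subgraph, so that the listed forbidden structures already suffice. I expect this to reduce to a routine induced-containment check once the explicit constructions of $S^1_k$ and $S^2_k$ from Section~\ref{sec:hcag} are in hand, proceeding case-by-case in $k$; this is the only nontrivial ingredient beyond a direct invocation of Theorem~\ref{thm:main}.
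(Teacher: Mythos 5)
Your handling of the circular-arc statement is correct and is exactly the paper's route: regions 2 and 3 are by definition the split minimal forbidden induced subgraphs of the class of circular-arc graphs, Theorem~\ref{thm:main} enumerates them, and the reindexing of $\overline{S_{k}^{+}}$, $k \ge 3$, to $\overline{S_{k+1}^{+}}$, $k \ge 2$, is all that remains. The general principle you invoke (a hereditary class restricted to split graphs is determined by its split minimal obstructions) is sound, since induced subgraphs of split graphs are split.

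The Helly half has a genuine gap, located exactly where you flagged that ``real bookkeeping is needed.'' The reconciliation you propose --- verifying that each $\overline{S_{k}}$, $k \ge 3$, contains one of the listed graphs as an induced subgraph --- cannot succeed. Each $\overline{S_{k}}$ is itself a \emph{minimal} split graph that is not a Helly circular-arc graph (it constitutes region 1), so it properly contains no other forbidden graph; and it is not isomorphic to any listed graph, because region 1 consists of circular-arc graphs while the graphs of Figures~\ref{fig:long-claw-derived} and~\ref{fig:whipping-top-derived} and the families $S^{1}_{k}$, $S^{2}_{k}$ are not circular-arc graphs (region 2). Concretely, $\overline{S_{3}}$ (the net) is a six-vertex split graph that is not a Helly circular-arc graph, yet every graph in your proposed list has at least seven vertices, so $\overline{S_{3}}$ contains none of them. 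The correct resolution is not an induced-containment check but the observation that the family $\overline{S_{k+1}}$, $k \ge 2$, must itself remain in the forbidden list; this is precisely how the paper records the result in Theorem~\ref{thm:split-hcag}, whose list is Figure~\ref{fig:long-claw-derived}, Figure~\ref{fig:whipping-top-derived}, $S^{1}_{k}$, $S^{2}_{k}$, \emph{and} $\overline{S_{k+1}}$, $k \ge 2$. With that family restored, your assembly of region 1 (via Joeris et al.) with region 2 (via Theorem~\ref{thm:main}) goes through and agrees with the paper's argument; as printed, the corollary's first sentence omits the $\overline{S_{k+1}}$ family and is inconsistent with Theorem~\ref{thm:split-hcag}, so the discrepancy you were trying to argue away is an error in the statement rather than something a containment check can repair.
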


\section{Preliminaries}

All graphs discussed in this paper are finite and simple.  The vertex set and edge set of a graph~$G$ are denoted by, respectively, $V(G)$ and~$E(G)$.
For a subset~$U\subseteq V(G)$, we denote by $G[U]$ the subgraph of~$G$ induced by~$U$, and by~$G - U$ the subgraph~$G[V(G)\setminus U]$, which is shortened to~$G - v$ when $U = \{v\}$.
The \emph{neighborhood} of a vertex~$v$, denoted by~$N_{G}(v)$, comprises vertices adjacent to~$v$, i.e., $N_{G}(v) = \{ u \mid uv \in E(G) \}$, and the \emph{closed neighborhood} of~$v$ is $N_{G}[v] = N_{G}(v) \cup \{ v \}$.
The \emph{closed neighborhood} and the \emph{neighborhood} of a set~$X\subseteq V(G)$ of vertices are defined as~$N_{G}[X] = \bigcup_{v \in X} N_{G}[v]$ and~$N_{G}(X) =  N_{G}[X] \setminus X$, respectively.
We may drop the subscript if the graph is clear from the context.

A \emph{clique} is a set of pairwise adjacent vertices, and an \emph{independent set} is a set of vertices that are pairwise nonadjacent. 
We say that a vertex~$v$ is \emph{simplicial} if $N[v]$ is a clique; such a clique is necessarily maximal. 
For~$\ell \ge 3$, we use $C_\ell$ to denote an induced cycle on~$\ell$ vertices; it is called an \emph{$\ell$-hole} if $\ell \ge 4$.
For a graph~$G$, we use $G^{\star}$ to denote the graph obtained from~$G$ by adding an isolated vertex. 
The \emph{complement graph}~$\overline{G}$ of a graph~$G$ is defined on the same vertex set~$V(G)$, where a pair of distinct vertices~$u$ and~$v$ is adjacent in~$\overline{G}$ if and only if $u v \not\in E(G)$.

For a chordal graph~$G$ and a (not necessarily maximal) clique~$K$ of~$G$, we define an \emph{auxiliary graph}~$G^{K}$ with the vertex set~$V(G)$.
The edges among vertices in~$V(G)\setminus K$ are the same as in~$G$.
A pair of vertices~$u, v\in K$ are adjacent in~$G^{K}$ if there exists a vertex adjacent to neither of them, i.e., $N_{G}(u)\cup N_{G}(v)\ne V(G)$.
A pair of vertices~$u\in K$ and~$v\in V(G)\setminus K$ are adjacent in $G^{K}$ if $N_{G}[v]\not\subseteq N_{G}[u]$.
Two quick remarks on the conditions are in order.
First, note that $N(u)\cup N(v) = V(G)$ if and only if $N[u]\cup N[v] = V(G)$ and~$uv\in E(G)$.
Second, $N_{G}[v]\not\subseteq N_{G}[u]$ if and only if either they are not adjacent, or there exists a vertex adjacent to~$v$ but not $u$ in~$G$.

A \emph{circular-arc graph} is the intersection graph of a set of arcs on a circle.  The set of arcs, together with the circle, constitutes a \emph{circular-arc model} of this graph.
A circular-arc model is \emph{normalized} if the following hold for every pair of adjacent vertices~$v_{1}$ and~$v_{2}$.
\begin{itemize}
\item The arc~$A(v_{1})$ contains the arc~$A(v_{2})$ whenever $N[v_{2}]\subseteq N[v_{1}]$.
\item The arcs~$A(v_{1})$ and~$A(v_{2})$ cover the circle whenever $N(v_{1})\cup N(v_{2}) = V(G)$ and~$v_{1} v_{2}$ is not an edge of a $C_{4}$.
\end{itemize}
It is known that a circular-arc graph admits a normalized circular-arc model if and only if it does not have any universal vertex~\cite{spinrad-88-case-1, hsu-95-independent-set-cag}.
Since a chordal graph has no holes, for the second condition, it suffices to check whether $A(v_{1})$ and~$A(v_{2})$ cover the circle whenever $N(v_{1})\cup N(v_{2}) = V(G)$.

\begin{lemma}\label{lem:construction}
  Let $G$ be a chordal circular-arc graph, and~$K$ a clique.
  If there is a normalized circular-arc model of~$G$ in which a point is covered by and only by arcs for vertices in~$K$, then the graph~$G^{K}$ satisfies \eqref{eq:1}.
\end{lemma}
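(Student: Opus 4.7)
The plan is to construct the required interval model of $G^K$ directly from the given circular-arc model via McConnell's flip-and-cut operation. Fix a normalized circular-arc model $\mathcal{A}=\{A(x):x\in V(G)\}$ of $G$ in which the point $p$ is covered by exactly the arcs $\{A(v):v\in K\}$. For each $v\in K$, replace $A(v)$ by its complementary arc $A'(v)$, the arc with the same endpoints as $A(v)$ going the other way around the circle, so that $A'(v)$ does not contain $p$. Leave the arcs for $V(G)\setminus K$ unchanged; by hypothesis none of them contains $p$. Since $p$ now lies outside every arc, cutting the circle at $p$ turns each (new) arc into a single interval on the real line, yielding a family $\mathcal{I}=\{I(x):x\in V(G)\}$.

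Next I would check that $\mathcal{I}$ is an interval model of $G^K$ by inspecting three types of pairs. For $u_1,u_2\in V(G)\setminus K$, the intervals $I(u_1),I(u_2)$ are just the arcs $A(u_1),A(u_2)$ cut at a point lying in neither, so their intersection pattern matches that in $G$, and hence also in $G^K$. For $v_1,v_2\in K$, $I(v_1)\cap I(v_2)\neq\emptyset$ iff $A(v_1)\cup A(v_2)$ does not cover the circle; the second normalization condition, together with the remark that for chordal graphs it suffices to check the covering condition for pairs with $N(v_1)\cup N(v_2)=V(G)$, gives that $A(v_1)\cup A(v_2)$ covers the circle iff $N_G(v_1)\cup N_G(v_2)=V(G)$. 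Hence $I(v_1)\cap I(v_2)\neq\emptyset$ iff $N_G(v_1)\cup N_G(v_2)\neq V(G)$, matching the defining condition for a $K$--$K$ edge of $G^K$. For $v\in K$ and $u\in V(G)\setminus K$, since $A(v)$ contains $p$ while $A(u)$ does not, $I(u)\cap I(v)\neq\emptyset$ iff $A(u)\not\subseteq A(v)$. By the first normalization condition, when $uv\in E(G)$ we have $A(u)\subseteq A(v)$ iff $N_G[u]\subseteq N_G[v]$, while $uv\notin E(G)$ forces $A(u)\subseteq A'(v)$. Combining these, $I(u)\cap I(v)\neq\emptyset$ iff $N_G[u]\not\subseteq N_G[v]$, matching the $G^K$ adjacency rule.

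Finally, I would verify the containment property in \eqref{eq:1}: for $v\in K$ and $u\in V(G)\setminus K$, $I(v)\supseteq I(u)$ iff $uv\notin E(G)$. If $uv\notin E(G)$, then $A(u)\cap A(v)=\emptyset$, so $A(u)\subseteq A'(v)$ and $I(u)\subseteq I(v)$. If $uv\in E(G)$, then either $A(u)\subseteq A(v)$, in which case $I(u)$ is disjoint from $I(v)$, or $A(u)$ properly overlaps $A(v)$, in which case $A(u)\setminus A(v)\neq\emptyset$ and the corresponding portion of $I(u)$ lies outside $I(v)$. Either way $I(v)\not\supseteq I(u)$. The main technical care in the argument is the bookkeeping that translates between arc relations (containment, covering) and closed- versus open-neighborhood relations; normalization provides each needed equivalence, and the chordal hypothesis is used only to reduce the second normalization condition to exactly the pairs with $N(v_1)\cup N(v_2)=V(G)$.
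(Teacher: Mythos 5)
Your construction is exactly the paper's: flip the arcs of $K$ to their complements and cut at the point $p$ covered only by $K$, then check the three types of pairs. Most of the verification matches the paper's proof, but there is one concretely wrong step in the final containment check. In the sub-case where $uv\in E(G)$ and $A(u)$ properly overlaps $A(v)$, you claim that the portion of $I(u)$ corresponding to $A(u)\setminus A(v)$ ``lies outside $I(v)$.'' It does not: $I(v)$ is (the cut of) the complementary arc $A'(v)$, so $A(u)\setminus A(v)$ is precisely the part of $A(u)$ that lies \emph{inside} $I(v)$, and it witnesses nothing. The correct witness for $I(u)\not\subseteq I(v)$ is a point of $A(u)\cap A(v)$ interior to $A(v)$, which exists because $uv\in E(G)$ forces $A(u)\cap A(v)\ne\emptyset$; this is exactly how the paper argues (``Since $A(x)\cap A(y)\ne\emptyset$ and their endpoints are distinct, $I(y)\not\subset I(x)$''). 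The conclusion of your sub-case is still true, but the stated reason is inverted and should be replaced.

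A second, smaller omission: several of your ``iff''s (complements of two covering arcs being disjoint, $A(u)\subseteq A(v)$ versus $A(u)$ meeting only the boundary of $A(v)$) can fail when arcs share endpoints. The paper preempts this by first perturbing the model so that two arcs share an endpoint only when the corresponding vertices have equal closed neighborhoods; you should state some such general-position assumption before cutting at $p$. With these two repairs your argument coincides with the paper's proof.
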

\begin{proof}
  Let $\mathcal{A}$ be a normalized circular-arc model of~$G$ in which a point is covered by and only by arcs for vertices in~$K$, and two vertices~$x, y$ share an endpoint if and only if $N[x] = N[y]$ (then the definition of normalized models forces $x$ and~$y$ to have the same arc).
  Denote by~$\lp(x)$ and~$\rp(x)$, respectively, the counterclockwise and clockwise endpoints of the arc~$A(x)$.
  We produce an interval model by setting
  \[
    I(x) =
    \begin{cases}
      [\rp(x), \lp(x)] & \text{if } x\in K,
      \\
      [\lp(x), \rp(x)] & \text{if } x\in V(G)\setminus K.
    \end{cases}
  \]
  We show that a pair of vertices~$x$ and~$y$ are adjacent in~$G^{K}$ if and only if $I(x)\cap I(y)\ne \emptyset$, and when $x\in K$ and~$y\in V(G)\setminus K$,
  \[
    I(y)\subseteq I(x) \Leftrightarrow x y\not\in E(G).
  \]
  
  
  First, if both $x$ and~$y$ are in~$V(G)\setminus K$, then $I(x) \cap I(y) = A(x)\cap A(y)$, which is empty if and only if $x y\not\in E(G)$ and~$x y\not\in E(G^{K})$.

  Second, suppose that $x, y\in K$.  By construction, $x y\in E(G^{K})$ if and only if $N[x]\cup N[y] \ne V(G)$.
  If $N[x]\cup N[y] = V(G)$, then $A(x)$ and~$A(y)$ cover the circle, and hence $I(x)$ and~$I(y)$ are disjoint. 
  Otherwise, there exists a vertex~$z\in V(G)\setminus K$ that is adjacent to neither of them in~$G$, and~$I(z) = A(z) \subseteq I(x)\cap I(y)$.
  
  Finally, suppose that $x\in K$ and~$y\in V(G)\setminus K$.
  If $x y\not\in E(G)$, then $I(y)\subset I(x)$ and~$x y\in E(G^{K})$.
  In the rest, $x y\in E(G)$.
  Since $A(x)\cap A(y)\ne \emptyset$ and their endpoints are distinct, $I(y)\not\subset I(x)$.
  Note that
   $x y\in E(G^{K})$ if and only if there exists a vertex~$z\in N_{G}(y)\setminus N_{G}(x)$.
  If such a $z$ exists, then $I(y) \cap I(z)\subseteq I(z) \subset I(x)$, and hence $I(x) \cap I(y)\ne \emptyset$.
  Otherwise, $N_{G}[y]\subset N_{G}[x]$.
  Since the model~$\mathcal{A}$ is normalized, $A(y)\subseteq A(x)$, which means that $I(x)\cap I(y)=\emptyset$.  
\end{proof}


A trivial choice for the clique~$K$ required by Lemma~\ref{lem:construction} is the closed neighborhood of a simplicial vertex. 
The following is not restricted to chordal graphs, but a chordal graph guarantees the existence of simplicial vertices.
In general, it is rather challenging to locate such a clique, which is step~2 of McConnell's algorithm~\cite[Section~8]{mcconnell-03-recognition-cag}.

\begin{lemma}\label{lem:simplicial-vertices}
  Let $G$ be a circular-arc graph, and let $s$ be a simplicial vertex of~$G$.
  In any normalized model of~$G$, there is a point covered by and only by arcs for~$N[s]$.
\end{lemma}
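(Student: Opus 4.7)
My plan is to prove the slightly stronger statement that \emph{every} point of the arc~$A(s)$ is covered by precisely the arcs for vertices in~$N[s]$; since an arc is non-empty, this immediately yields the lemma.

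The first step is a purely combinatorial observation: because $s$ is simplicial, $N[s]$ is a clique, and therefore for every neighbour $u$ of~$s$ one has $N[s]\subseteq N[u]$. Indeed, $s\in N[u]$ because $us\in E(G)$, $u\in N[u]$ trivially, and any other vertex $w\in N(s)$ lies in the clique $N[s]$ together with $u$, so $uw\in E(G)$. Applying the first clause in the definition of a normalized circular-arc model to the adjacent pair $u,s$ then forces $A(s)\subseteq A(u)$ for every $u\in N(s)$. Hence every point of $A(s)$ lies in $A(v)$ for all $v\in N[s]$.

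The second step is to argue that no arc for a vertex outside $N[s]$ can touch $A(s)$: if $u\notin N[s]$ then $us\notin E(G)$, so the model forces $A(u)\cap A(s)=\emptyset$. Combining the two steps, every point of $A(s)$ (and $A(s)\ne\emptyset$) is covered by exactly the arcs for vertices in $N[s]$, which is stronger than what is required.

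The only delicate point I foresee is identifying which clause of the definition of a normalized model is needed and verifying that its hypothesis holds; once the containment $N[s]\subseteq N[u]$ is spotted for each $u\in N(s)$, the inclusion of arcs and hence the entire lemma drop out immediately. No sweeping argument, Helly-type property, or case analysis on how arcs overlap is required, and the second normalization clause (covering the circle) plays no role here.
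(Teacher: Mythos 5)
Your proof is correct and follows essentially the same route as the paper's: it uses $N[s]\subseteq N[u]$ for each $u\in N(s)$ together with the first normalization clause to get $A(s)\subseteq A(u)$, and then notes that arcs of non-neighbours miss $A(s)$ entirely. You merely spell out the second half (which the paper leaves implicit) and handle it in slightly more detail; no substantive difference.
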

\begin{proof}
  Let $x$ be a neighbor of~$s$.
  By definition, $A(s)\subseteq A(x)$ because $N[s] \subseteq N[x]$.  Thus, any point in~$A(s)$ is covered precisely by the arcs for~$N[s]$.
\end{proof}


\begin{proposition}\label{lem:trivial}
  Let $G$ be a chordal graph, $K$ a clique, and~$x$ a simplicial vertex of~$G$. 
 If $x\not\in K$, then it is also simplicial in~$G^{K}$.
\end{proposition}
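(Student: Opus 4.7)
The plan is to first describe the neighborhood $N_{G^K}(x)$ explicitly and then verify by case analysis that $N_{G^K}[x]$ is a clique in $G^K$. Write $C := N_G[x]$, which is a clique of $G$ by the simpliciality of $x$; note that $x\in C\setminus K$.

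The first step is the key observation: for every $u\in N_G(x)\cap K$, we have $N_G[x]\subseteq N_G[u]$, because $C$ is a clique containing both $x$ and $u$, so every vertex of $N_G[x]$ is adjacent to $u$ (or equal to $u$). By the definition of $G^K$, this means such a $u$ is \emph{not} adjacent to $x$ in $G^K$. Conversely, every $u\in K\setminus N_G(x)$ satisfies $x\in N_G[x]\setminus N_G[u]$, so $xu\in E(G^K)$. Finally, for $u\in V(G)\setminus K$, adjacency to $x$ in $G^K$ coincides with adjacency in $G$. Combining these,
\[
  N_{G^K}(x) \;=\; (N_G(x)\setminus K)\;\cup\;(K\setminus N_G(x)).
\]

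The second step is to check pairwise adjacency in $G^K$ for any two distinct vertices $u,v\in N_{G^K}[x]$. There are three essentially different cases. If both lie in $V(G)\setminus K$, they lie in $N_G[x]$, which is a clique, so $uv\in E(G)=E(G^K)$ on this part. If both lie in $K$, then by the description above both are non-neighbors of $x$ in $G$, hence $x\notin N_G(u)\cup N_G(v)$; since $x\ne u,v$, this witnesses $N_G(u)\cup N_G(v)\ne V(G)$ and hence $uv\in E(G^K)$. If $u\in N_G(x)\setminus K$ and $v\in K\setminus N_G(x)$, then $x\in N_G[u]\setminus N_G[v]$, so $N_G[u]\not\subseteq N_G[v]$, again giving $uv\in E(G^K)$.

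Assembling these pieces shows that $N_{G^K}[x]$ is a clique of $G^K$, so $x$ is simplicial in $G^K$. There is no real obstacle here; the only non-obvious point is the initial observation that the neighbors of $x$ in $K$ drop out of $N_{G^K}(x)$, which pivots on the fact that $C$ being a clique forces $N_G[x]\subseteq N_G[u]$ for each $u\in N_G(x)\cap K$. Once this is in hand, the three-case verification is routine and uses nothing beyond the definition of $G^K$ and $x\notin K$.
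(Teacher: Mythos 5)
Your proof is correct and follows essentially the same route as the paper's: you compute $N_{G^K}(x) = (N_G(x)\setminus K)\cup(K\setminus N_G(x))$ using the simpliciality of $x$ to show its $K$-neighbors drop out, and then verify pairwise adjacency by cases. The paper states the same facts more tersely ("by construction"), so your version just spells out the details.
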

\begin{proof}

  By construction, $N_{G^{K}}(x)\setminus K = N_{G}(x)\setminus K$, and it is a clique of~$G^{K}$.
  On the other hand, $N_{G^{K}}(x)\cap K = K\setminus N_{G}(x)$.
  By construction, any two vertices in~$N_{G^{K}}(x)\setminus K$ and~$N_{G^{K}}(x)\cap K$ are adjacent and any two vertices in~$N_{G^{K}}(x)\cap K$ are adjacent.
  Thus, $x$ is simplicial in~$G^{K}$.
\end{proof}

We are now ready to prove the main theorem of this section.
\begin{proof}[Proof of Theorem~\ref{thm:correlation}]
  The implication from (i) to (ii) follows from Lemmas~\ref{lem:construction} and~\ref{lem:simplicial-vertices}.
  Since $G$ contains at least one simplicial vertex~$s$, and~$N_{G}[s]$ is a clique, (ii) implies (iii).  
  In the rest, we show (iii) implies (i).  Suppose that $\mathcal{I} = \{ [\lp(x), \rp(x) ] \mid x\in V(G)\}$ is an interval model of~$G^{K}$ specified in \eqref{eq:1}.
  We may assume that all the endpoints in~$\mathcal{I}$ are positive, and hence no interval contains the point~$0$.
  We claim that the following arcs on a circle of length~$\ell + 1$, where $\ell$ denotes the maximum of the $2n$ endpoints in~$\mathcal{I}$, gives a circular-arc model of~$G$:
  \[
    A(x) =
    \begin{cases}
      [\rp(x), \lp(x)] & \text{if } x\in K,
      \\
      [\lp(x), \rp(x)] & \text{if } x\in V(G)\setminus K.
    \end{cases}
  \]
  All the arcs for vertices in~$K$ intersect because they cover the point~$0$.
  On the other hand, note that $G^{K} - K = G - K$, while $I(x) = A(x)$ for all $x\in V(G)\setminus K$.
  For a pair of vertices~$x\in K$ and~$y\in V(G)\setminus K$,
  by assumption, $x y\not\in E(G)$ if and only if $I(y)\subset I(x)$, which is equivalent to~$A(x)\cap A(y) \ne \emptyset$.
\end{proof}

A graph is a \emph{split graph} if its vertex set can be partitioned into a clique~$K$ and an independent set~$S$.  We use $K \uplus S$ to denote this partition, called a \emph{split partition}.
%
A $k$-sun has a unique split partition, and so does its complement, but the complete graph on~$n$ vertices has $n + 1$ split partitions.
We say that a split graph is \emph{ambiguous} if there are at least two different split partitions.

\begin{lemma}\label{lem:special-max-clique}
  A split graph~$G$ is ambiguous if and only if every maximal clique of~$G$ contains a simplicial vertex.
\end{lemma}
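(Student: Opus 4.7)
The plan is to reduce both directions to a clean classification of the maximal cliques of a split graph with a fixed split partition $K\uplus S$. A maximal clique $M$ either meets $S$---in which case $|M\cap S|=1$, and writing $M\cap S=\{v\}$, maximality forces $M=N[v]$---or lies entirely inside $K$, in which case $M=K$. Moreover $K$ is itself a maximal clique precisely when no vertex of $S$ is adjacent to every vertex of $K$. For a maximal clique of the first type, the vertex $v$ is simplicial because $N[v]\subseteq\{v\}\cup K$ is a clique, so only the maximal clique $M=K$ needs further attention. For a vertex $u\in K$, I have $N[u]=K\cup(N(u)\cap S)$, which is a clique exactly when $N(u)\cap S=\emptyset$ or $N(u)\cap S$ consists of a single vertex adjacent to all of $K$; when $K$ is itself a maximal clique the second possibility is ruled out, and so $u\in K$ is simplicial iff $u$ has no neighbor in $S$.

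For the forward direction, assume $G$ is ambiguous, fix a split partition $K\uplus S$, and pick a different split partition $K'\uplus S'$. Since $K$ is a clique and $S'$ is independent, $|K\cap S'|\le 1$; symmetrically $|S\cap K'|\le 1$. Let $M$ be any maximal clique of $G$; the only nontrivial case is $M=K$, so I may assume $K$ is maximal. I would then run through the three shapes $K'$ can take: (a) $K'=K\setminus\{b\}$ for some $b\in K$; (b) $K'=K\cup\{c\}$ for some $c\in S$; (c) $K'=(K\setminus\{b\})\cup\{c\}$ for some $b\in K$ and $c\in S$. In case (b), $c$ is adjacent to every vertex of $K$, contradicting that $K$ is maximal. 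In case (a), the requirement that $S\cup\{b\}$ be independent forces $b$ to have no neighbor in $S$, so $b$ is the required simplicial vertex of $K$. Case (c) splits on whether $bc$ is an edge: if it is not, the argument of case (a) applies to $b$; if it is, $c$ is adjacent to every vertex of $K$, again contradicting maximality of $K$. The main subtlety is this last case, whose resolution is the real content of the forward direction.

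For the reverse direction, start from any split partition $K\uplus S$ and split on whether $K$ is a maximal clique of $G$. If it is not, some vertex $v\in S$ is adjacent to every vertex of $K$, and $(K\cup\{v\})\uplus(S\setminus\{v\})$ is a different split partition. If it is, the hypothesis applied to $K$ supplies a simplicial vertex $u\in K$; the characterization from the first paragraph gives $N(u)\cap S=\emptyset$, so $(K\setminus\{u\})\uplus(S\cup\{u\})$ is a different split partition. Either way $G$ is ambiguous.
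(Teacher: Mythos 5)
Your proof is correct. The reverse direction coincides with the paper's argument (the paper phrases it contrapositively: in an unambiguous graph the unique clique side must be a maximal clique with no simplicial vertex, for exactly the two reasons you give --- otherwise one could either absorb a vertex of $S$ adjacent to all of $K$, or expel a simplicial vertex of $K$). For the forward direction the paper has a shorter route that bypasses your case analysis entirely: since every vertex of the independent side of \emph{any} split partition $K'\uplus S'$ is simplicial, a maximal clique $K$ with no simplicial vertex must satisfy $K\cap S'=\emptyset$, hence $K\subseteq K'$, hence $K=K'$ by maximality of $K$; so the partition is pinned down and $G$ is unambiguous. Your enumeration of the shapes $K'=K\setminus\{b\}$, $K\cup\{c\}$, $(K\setminus\{b\})\cup\{c\}$ reaches the same conclusion and is sound (including the $bc\in E(G)$ subcase, which you rightly flag as the delicate one), but it pays for this with the preliminary classification of maximal cliques and of simplicial vertices inside $K$, neither of which the paper's containment argument needs.
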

\begin{proof}
  Suppose that there exists a maximal clique~$K$ containing no simplicial vertices.
  Let $K'\uplus S'$ be a split partition of~$G$.  Since each vertex in~$S'$ is simplicial, $K\subseteq K'$.
  The maximality of~$K$ implies $K = K'$.  Thus, $G$ is not ambiguous.
  Now suppose that $G$ is unambiguous, and let $K\uplus S$ be its unique split partition.
  If $K$ is a proper subset of another clique~$K'$, then $K'\uplus (V(G)\setminus K')$ is a different split partition.
  If any vertex~$v$ in~$K$ is simplicial, then $v$ has at most one neighbor in~$S$.
  Moreover, if $x\in N(v)\cap S$, then $K\subseteq N(x)$.
  In either case, we can find a different split partition, a contradiction.
  Thus, $K$ is a maximal clique containing no simplicial vertices.
\end{proof}

In an ambiguous split graph $G$, every maximal clique is $N[s]$ for some simplicial vertex~$s$.
Thus, if $G$ is a circular-arc graph, it has to be a Helly circular-arc graph by Lemma~\ref{lem:simplicial-vertices}.
In other words, if a split graph~$G$ is a circular-arc graph but not a Helly circular-arc graph, then it must be unambiguous.

\begin{theorem}\label{lem:split-cag-and-hcag}
  An ambiguous split graph is a Helly circular-arc graph if and only if it is a circular-arc graph.
\end{theorem}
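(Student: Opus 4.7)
The backward direction is immediate from the definitions, since every Helly circular-arc model is a circular-arc model. For the forward direction, the plan is to combine Lemmas~\ref{lem:special-max-clique} and~\ref{lem:simplicial-vertices}, with a small separate treatment when universal vertices are present.

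Assume first that $G$ has no universal vertex, so that $G$ admits a \emph{normalized} circular-arc model~$\mathcal{A}$. By Lemma~\ref{lem:special-max-clique}, every maximal clique of~$G$ contains a simplicial vertex~$s$; since $N[s]$ is itself a clique, each maximal clique equals $N[s]$ for some simplicial~$s$. Lemma~\ref{lem:simplicial-vertices} then guarantees that in~$\mathcal{A}$ there is a point covered by exactly the arcs for~$N[s]$, so $\bigcap_{v \in N[s]} A(v) \ne \emptyset$. Hence every maximal clique has a common point in~$\mathcal{A}$, meaning $\mathcal{A}$ is a Helly circular-arc model of~$G$.

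The remaining case, where $G$ contains a universal vertex~$v$, is handled by induction on the number of universal vertices. Note first that $G - v$ is either ambiguous split or has at most one vertex: given two distinct split partitions of~$G$, either $v$ lies in both clique sides (yielding two distinct split partitions of~$G - v$), or $v$ lies in the independent side of some partition, which forces that partition to be $(V(G) \setminus \{v\}) \uplus \{v\}$ and hence $G - v$ to be complete, thus ambiguous once it has at least two vertices. Clearly $G - v$ is still a circular-arc graph, so by induction it admits a Helly circular-arc model. Extending this model by assigning~$v$ an arc that covers the entire circle produces a circular-arc model of~$G$ (since $v$ is universal); it remains Helly because every maximal clique of~$G$ has the form $M \cup \{v\}$ for a maximal clique~$M$ of~$G - v$, and the witnessing point for~$M$ lies in the arc for~$v$ as well.

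The main obstacle is noticing that Lemma~\ref{lem:simplicial-vertices} requires a normalized model and so does not apply directly when $G$ has universal vertices; but this is easily bypassed by the induction above, since adding a universal vertex as a full-circle arc preserves the Helly property.
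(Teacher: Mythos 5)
Your proof is correct and follows essentially the same route as the paper: the sufficiency is exactly the combination of Lemmas~\ref{lem:special-max-clique} and~\ref{lem:simplicial-vertices}, using the fact that in an ambiguous split graph every maximal clique is $N[s]$ for some simplicial vertex~$s$, so that every maximal clique receives a common point in a normalized circular-arc model. The only difference is that you additionally treat the case of universal vertices (where normalized models, and hence Lemma~\ref{lem:simplicial-vertices}, are unavailable) by peeling them off inductively and re-inserting them as near-full arcs; the paper's one-line proof silently elides this case, so your extra step is a legitimate patch rather than a genuinely different approach.
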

\begin{proof}
The necessity is trivial, while the sufficiency follows from Lemmas~\ref{lem:special-max-clique} and~\ref{lem:simplicial-vertices}.
\end{proof}

\section{Split graphs that are not Helly circular-arc graphs}
\label{sec:hcag}

Let $G$ be a split graph, and~$K\uplus S$ a split partition of~$G$.
The graph~$G^{K}$ has a very simple structure: $S$ remains an independent set; a pair of vertices~$v\in K$ and~$x\in S$ are adjacent in~$G^{K}$ if and only if they are not adjacent in~$G$;
and a pair of vertices~$u, v\in K$ are adjacent in~$G^{K}$ if and only if $N_{G}(u)\cup N_{G}(v)\ne V(G)$, i.e., there exists a vertex adjacent to neither of them.
If $G^{K}$ is an interval graph, then it satisfies \eqref{eq:1} by Proposition~\ref{lem:trivial}.
For a split graph that is a Helly circular-arc graph, we have thus a simpler statement than Theorem~\ref{thm:correlation}.

\begin{theorem}\label{thm:helly-main}
  A split graph~$G$ is a Helly circular-arc graph if and only if $G^{K}$ is an interval graph.
\end{theorem}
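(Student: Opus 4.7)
My plan is to handle the two directions separately, using Theorem~\ref{thm:correlation}, Lemma~\ref{lem:construction}, and the fact (Proposition~\ref{lem:trivial}) that every $s\in S$ is simplicial in $G^K$.

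For the sufficiency direction I would start from an interval model of $G^{K}$. Since each $s\in S$ is simplicial in $G^K$, the intervals for $N_{G^K}[s]$ have a common point by Helly for intervals, and I may shrink $I(s)$ to such a point. In the resulting model, for every $v\in K$ one has $I(s)\subseteq I(v)$ iff $v\in N_{G^K}(s)=K\setminus N_G(s)$, iff $sv\notin E(G)$, which is exactly condition~\eqref{eq:1}. Theorem~\ref{thm:correlation} then gives that $G$ is a circular-arc graph; to upgrade this to Helly I would inspect the explicit construction in its proof. There, the arcs $A(v)=[\rp(v),\lp(v)]$, $v\in K$, all contain the cut point, so $K$'s arcs share a common point, and for every maximal clique $N[s]$ with $s\in S$ the disjointness $I(s)\cap I(v)=\emptyset$ for every $v\in N_G(s)$ (a consequence of~\eqref{eq:1}) forces the point-arc $A(s)$ to lie inside $A(v)=[\rp(v),\lp(v)]$, so $A(s)$ is itself a common point of the arcs for $N[s]$.

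For the necessity direction I would split on whether $K$ is a maximal clique of $G$. If $K$ is maximal, a normalized Helly circular-arc model of $G$ supplies a point $p\in\bigcap_{v\in K}A(v)$, and maximality forces $p$ to lie on no other arc; Lemma~\ref{lem:construction} then yields that $G^K$ satisfies~\eqref{eq:1}, hence is interval. If $K$ is not maximal, some $s_0\in S$ satisfies $N_G(s_0)=K$, so $K':=N_G[s_0]=K\cup\{s_0\}$ is a clique with $s_0$ simplicial, and Theorem~\ref{thm:correlation} applied to $s_0$ gives that $G^{K'}$ is an interval graph. A routine edge-by-edge comparison against the definition of the auxiliary graph shows that $G^K$ and $G^{K'}$ agree on $V(G)\setminus\{s_0\}$, while in $G^K$ the vertex $s_0$ is isolated (since every $v\in K=N_G(s_0)$ gives $s_0v\in E(G)$ and hence $s_0v\notin E(G^K)$). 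Thus $G^K$ is the disjoint union of $G^{K'}-s_0$ with an isolated vertex, which is interval.

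I expect the main obstacle to be precisely this non-maximal subcase of the necessity direction. One cannot hope to exhibit a point of the circle covered by exactly the arcs of $K$: the common region of the $K$-arcs in a Helly model may be entirely absorbed by $A(s_0)$. The remedy is to invoke Theorem~\ref{thm:correlation} at the larger clique $N_G[s_0]$ and then transfer back, noting that the passage from $G^{K'}$ to $G^K$ simply isolates $s_0$.
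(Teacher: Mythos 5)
Your proof is correct, but both directions take a genuinely different route from the paper's. For sufficiency, the paper never touches an explicit model of~$G$: it forms $G^{+}$ by adding a vertex adjacent to~$K$, observes that $(G^{+})^{K\cup\{s\}}$ is interval with every $S$-vertex simplicial so that \eqref{eq:1} holds, concludes via Theorem~\ref{thm:correlation} that $G^{+}$ is a circular-arc graph, and then gets Helly-ness for free because $G^{+}$ is ambiguous (Theorem~\ref{lem:split-cag-and-hcag}); $G$ inherits the property as an induced subgraph. You instead shrink the $S$-intervals to points to verify \eqref{eq:1} directly for~$K$ and then check by hand that the arc model produced inside the proof of Theorem~\ref{thm:correlation} is Helly (the cut point for~$K$, the point $A(s)$ for each $N[s]$). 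Your version is more self-contained and explicit, at the cost of relying on the internals rather than the statement of Theorem~\ref{thm:correlation}; the paper's version is shorter but leans on the ambiguity machinery. For necessity, the paper also splits on maximality of~$K$, but in the non-maximal case it stays inside a normalized model (which is Helly by the cited result of Joeris et al.) and takes a point just past the right endpoint of $A(s_{0})$; your alternative of applying Theorem~\ref{thm:correlation} at $N[s_{0}]$ and observing that $G^{K}$ is $G^{N[s_{0}]}$ with $s_{0}$ isolated instead of universal is clean and avoids the perturbation argument entirely (your stated worry that no point of the circle is covered by exactly the $K$-arcs is what the paper's perturbation handles, so both remedies work). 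Two small points you should tighten: the existence of a normalized model requires $G$ to have no universal vertex (a harmless reduction, since universal vertices are isolated in~$G^{K}$, but it should be said), and your justification ``$s_{0}v\in E(G)$ hence $s_{0}v\notin E(G^{K})$'' is valid only because of the split-graph description of~$G^{K}$ given at the start of Section~\ref{sec:hcag}, not because adjacency in~$G$ generally kills adjacency in~$G^{K}$.
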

\begin{proof}
  The necessity follows from Lemma~\ref{lem:construction}.
  All normalized circular-arc models of~$G$ are Helly~\cite{joeris-11-hcag}.
  Thus, if $K$ is maximal, then in any normalized circular-arc model of~$G$, there is a point corresponding to~$K$.
  Otherwise, there is a vertex~$x\in S$ such that $K\subseteq N(x)$.
  By Lemma~\ref{lem:simplicial-vertices}, there is a point covered by and only by arcs for~$K\cup \{x\}$.
  We can take a point immediately after the right endpoint of the arc for~$x$.
  For sufficiency, we may assume without loss of generality that there is no universal vertex (note that a universal vertex of~$G$ is isolated in~$G^{K}$).
  We create a new graph~$G'$ by adding a vertex~$s$ to~$G$ and making it adjacent to~$K$.
  The vertex~$s$ is universal in~$(G')^{K\cup \{s\}}$, and~$(G')^{K\cup \{s\}} - s$ is isomorphic to~$G^K$.  Thus, $(G')^{K\cup \{s\}}$ is an interval graph.
  In~$(G')^{K\cup \{s\}}$, all vertices in~$S$ are simplicial (Proposition~\ref{lem:trivial}), and two vertices~$v\in K$ and~$u\in V(G)\setminus K$ are adjacent if and only if they are not adjacent in~$G$.  Thus, any interval model of~$(G')^{K\cup \{s\}}$ vacuously satisfies \eqref{eq:1}, and~$G'$ is a circular-arc graph by Theorem~\ref{thm:correlation}.
  Since $G'$ is ambiguous, it is a Helly circular-arc graph by Theorem~\ref{lem:split-cag-and-hcag}.  Then $G$ is a Helly circular-arc graph because it is an induced subgraph of~$G'$.
\end{proof}

Throughout this section, we may assume without loss of generality that there are no universal vertices in~$G$.
Note that for any pair of vertices~$u, v\in K$ that are adjacent in~$G^{K}$, the vertices in~$V(G)\setminus (N_{G}(u)\cup N_{G}(v))$ can be viewed as ``witnesses'' of the edge~$u v$; they must be from~$S$.
We can generalize this observation to any clique~$X$ of~$G^{K}[K]$.
A vertex~$w\in S$ is a \emph{witness} of~$X$ if $w$ is adjacent to all the vertices in~$X$ in~$G^{K}$, i.e.,
$X\subseteq V(G)\setminus N_{G}(w)$.
The clique~$X$ is then \emph{witnessed}.

\begin{proposition}\label{lem:h-maxclique}
  If a clique of~$G^{K}[K]$ is not witnessed, 
  then $G^{K}$ contains an induced sun of which all the degree-two vertices are from~$S$.
\end{proposition}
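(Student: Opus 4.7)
The plan is to reduce the given non-witnessed clique to an inclusion-minimal one and then read off an induced $3$-sun directly from the witnesses of its proper sub-cliques; the extraction works regardless of the eventual size of the minimal clique, so no induction on the clique size is needed.

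First, I would pick $X'\subseteq X$ that is still a non-witnessed clique of $G^{K}[K]$ but minimal with respect to inclusion, and check $|X'|\ge 3$. A singleton $\{v\}$ being non-witnessed means $v\in N_G(w)$ for every $w\in S$; combined with $v\in K$ and the fact that $K$ is a clique of~$G$, this would make $v$ universal, contradicting the standing assumption of the section. For a pair $\{u,v\}$ forming an edge of $G^{K}[K]$, the definition of $G^{K}$ yields a vertex $z\notin N_G(u)\cup N_G(v)$; since $K$ is a clique of $G$, necessarily $z\in S$, and then $z$ witnesses $\{u,v\}$, a contradiction. This is the only step where both the split structure and the no-universal-vertex assumption are essential, and I expect it to be the main obstacle.

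Second, using the minimality of $X'$, for each $v_i\in X'$ I would fix a witness $w_i\in S$ of $X'\setminus\{v_i\}$. By definition $w_i$ is non-adjacent in $G$ to every vertex of $X'\setminus\{v_i\}$, so $w_i v_j\in E(G^{K})$ for all $j\ne i$; and since $X'$ itself is not witnessed, $w_i\in N_G(v_i)$, giving $w_i v_i\notin E(G^{K})$. The $w_i$'s are pairwise distinct, since $w_i=w_j$ with $i\ne j$ would force this single vertex to be non-adjacent in $G$ to $(X'\setminus\{v_i\})\cup(X'\setminus\{v_j\})=X'$ and thus to witness $X'$.

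Finally, I would pick any three indices, say $1, 2, 3$, and verify that the six vertices $\{v_1, v_2, v_3, w_1, w_2, w_3\}$ induce a $3$-sun in $G^{K}$: the $v_i$'s form a triangle as a subset of the clique $X'$; the $w_i$'s are independent because $S$ is independent in $G^{K}$; and by the previous step the bipartite edges among the six vertices are precisely $\{w_a v_b : a\ne b,\ a,b\in\{1,2,3\}\}$, which matches the adjacency pattern of a $3$-sun with degree-two vertices $w_1, w_2, w_3$, all lying in $S$ as required.
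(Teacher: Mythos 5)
Your proof is correct and follows essentially the same route as the paper's: pass to a minimal unwitnessed clique, rule out sizes one and two via the no-universal-vertex assumption and the fact that every edge of $G^{K}[K]$ has a witness by construction, then assemble the sun from witnesses of the three sub-cliques obtained by deleting one of three chosen vertices. The only differences are cosmetic---you use inclusion-minimality rather than minimum cardinality (both work, since witnessedness is closed under taking subsets), and you spell out the distinctness of the witnesses and the full adjacency check, which the paper leaves implicit.
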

\begin{proof}
  Suppose that $K'$ is a smallest unwitnessed clique of~$G^{K}[K]$, 
  Note that $|K'| > 2$: by assumption, $G$ does not contain a universal vertex; a clique of order two is an edge and hence has a witness by construction.
  We take three vertices~$v_{1}$, $v_{2}$, and~$v_{3}$ from~$K'$.
  By the selection of~$K'$, for each $i = 1, 2, 3$, the clique~$K'\setminus \{v_{i}\}$ has a witness~$x_{i}\in S$.
  Since $x_{i}$ is not a witness of~$K'$, it cannot be adjacent to~$v_{i}$ in~$G^{K}$.
  Then $\{v_{1}, v_{2}, v_{3}, x_{1}, x_{2}, x_{3}\}$ induces a sun in~$G^{K}$.
\end{proof}

We say an induced subgraph of~$G^{K}$ is \emph{witnessed} if every maximal clique of this subgraph disjoint from~$S$ is witnessed.
Note that all holes of~$G^{K}$ are trivially witnessed because they do not have any clique of order three.

\begin{proposition}\label{lem:simplicial-iff-i-helly}
  If $G^{K}$ contains a witnessed minimal non-interval induced subgraph~$F$, then it contains an induced and witnessed subgraph isomorphic to~$F$ 
  all simplicial vertices of which are from~$S$.
\end{proposition}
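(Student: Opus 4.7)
The plan is to proceed by induction on the number of simplicial vertices of $F$ lying in $K$; when this number is zero (for instance when $F$ is a hole) the subgraph $F$ itself already does the job.

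For the inductive step let $v\in K$ be simplicial in $F$, and write $C=N_F(v)$, so that $N_F[v]=\{v\}\cup C$ is a clique of $F$. The crucial first step is to show $C\subseteq K$. Suppose to the contrary that some $x\in C$ lies in $S$; since $S$ is independent in $G^K$ and $N_F[v]$ is a clique, $x$ is the unique $S$-vertex of $N_F[v]$, and $x$ is adjacent in $F$ to every vertex of $\{v\}\cup(C\setminus\{x\})$. I would then show that every neighbor of $x$ in $F$ already lies in $N_F[v]$: for any hypothetical $y\in N_F(x)\setminus N_F[v]$ we must have $y\in K$, and then $xv,xy\in E(G^K)$ together with $x\in S$ give $xv,xy\notin E(G)$, while $vy\notin E(G^K)$ with $v,y\in K$ forces $N_G(v)\cup N_G(y)=V(G)$ by the construction of $G^K$; but $x$ belongs to neither $N_G(v)$ nor $N_G(y)$, a contradiction. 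This yields $N_F(x)=\{v\}\cup(C\setminus\{x\})$ and hence $N_F[v]=N_F[x]$. A direct inspection of the minimal non-interval graphs classified in Theorem~\ref{thm:lb}---holes of length at least four and the graphs in Figure~\ref{fig:non-interval}---shows that none of them contains two distinct vertices with identical closed neighborhoods, so this case cannot arise, and we conclude $C\subseteq K$.

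Once $\{v\}\cup C$ is known to be a maximal clique of $F$ contained in $K$, the witnessed hypothesis supplies $w\in S$ non-adjacent in $G$ to every vertex of $\{v\}\cup C$, and I would verify that $w$ has exactly the same $G^K$-neighborhood as $v$ inside $V(F)\setminus\{v\}$. Adjacency to every vertex of $C$ follows from the witness property; non-adjacency to any other vertex of $V(F)\cap S$ follows from $S$ being independent in $G^K$; and for any $y\in K\cap V(F)\setminus(\{v\}\cup C)$, the non-edge $vy\notin E(G^K)$ forces $N_G(v)\cup N_G(y)=V(G)$, so from $wv\notin E(G)$ we deduce $wy\in E(G)$ and hence $wy\notin E(G^K)$. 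Consequently $F':=G^K[(V(F)\setminus\{v\})\cup\{w\}]$ is isomorphic to $F$ via the map sending $v$ to $w$ and acting as the identity elsewhere.

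It remains to check that $F'$ is witnessed and to close the induction. Every maximal clique of $F'$ avoiding $S$ lies inside $F'[K]=F[K]-v$, and any such clique is either a maximal clique of $F[K]$ that does not contain $v$ (already witnessed in $F$) or is obtained from a maximal clique of $F[K]$ containing $v$ by deleting $v$ (in which case a witness of the larger clique also witnesses the smaller one). Since $w\in S$ and $v\in K$, the number of simplicial vertices of $F'$ lying in $K$ is one less than in $F$, so applying the inductive hypothesis to $F'$ finishes the argument. The main obstacle is the first step: without the twin-style contradiction one cannot exclude the possibility that a simplicial $K$-vertex has an $S$-neighbor, and the subsequent replacement cannot succeed in that situation because no two $S$-vertices are adjacent in $G^K$.
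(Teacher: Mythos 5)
Your proof is correct and its overall strategy is the paper's: replace each simplicial vertex $v\in K$ of $F$ by a witness of the maximal clique $N_F[v]$, check that the swap yields an isomorphic witnessed copy, and iterate. The one step where you take a genuinely different route is the sub-claim that $N_F(v)\subseteq K$. The paper obtains it by inspecting Figure~\ref{fig:non-interval}: in every minimal non-interval graph no neighbor of a simplicial vertex is itself simplicial, so by Proposition~\ref{lem:trivial} (every vertex of $S$ is simplicial in~$G^{K}$) no $F$-neighbor of $v$ can lie in $S$. You instead work inside the construction of $G^{K}$: an $S$-neighbor $x$ of $v$ would have to satisfy $N_F[x]=N_F[v]$, because a non-edge between two $K$-vertices of $G^{K}$ forces their $G$-neighborhoods to cover $V(G)$, and then you rule this out because no minimal non-interval graph contains two vertices with the same closed neighborhood. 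Both routes end in a finite inspection of the list from Theorem~\ref{thm:lb}, just of different properties; yours leans on the explicit adjacency rules of $G^{K}$, the paper's on simpliciality being inherited by induced subgraphs. Your explicit check that the replacement preserves the witnessed property, and the formal induction, flesh out what the paper compresses into ``in a similar way we can replace other simplicial vertices.'' The only point you leave implicit is that the witness $w$ is not already a vertex of $F$; this is immediate, since $w\in V(F)$ would force $w\in N_F(v)=C\subseteq K$ while $w\in S$.
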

\begin{proof}
  We are done if all simplicial vertices of~$F$ are from~$S$.
  Suppose that a simplicial vertex~$v$ of~$F$ is from~$K$.  
 A check of Figure~\ref{fig:non-interval} convinces us
 that no vertex in~$N_{F}(v)$ can be simplicial in~$G^{K}$.  Thus, $N_{F}(v)\subseteq K$ by Proposition~\ref{lem:trivial}.
 If $v\in K$, there is a witness~$w$ of~$N_{F}[v]$ by assumption.
  By Proposition~\ref{lem:trivial}, the vertex~$w$ has no other neighbor in~$F$.  Thus, replacing $v$ with~$w$ in~$F$ leads to an isomorphic graph, which remains witnessed.
  In a similar way we can replace other simplicial vertices of~$F$, leading to a claimed subgraph.
\end{proof}

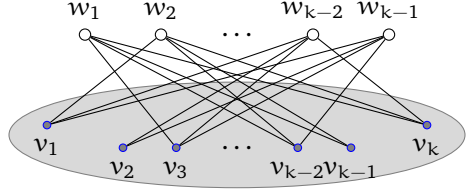
\begin{figure}[h]
  \centering \small
  \begin{tikzpicture}
    \draw[fill = gray!30, draw = gray] (2.5, 0.17) ellipse [x radius=3., y radius=0.7];
    \foreach \x/\l in {0/1, 5/k} {
      \node[filled vertex, "$v_{\l}$" below] (v\x) at (\x, 0.3) {};
    }
    \foreach[count=\i] \x/\l in {0/1, 1/2, 3/k-2, 4/k-1} 
    \node[empty vertex, "$w_{\l}$"] (w\i) at (\x+.5, 1.5) {};
    \foreach[count=\i] \x/\l in {1/2, 1.7/3, 3.3/k-2, 4/k-1} 
    \node[filled vertex, "$v_{\l}$" below] (v\i) at (\x, 0) {};

    \node at (2.5, 0) {$\ldots$};
    \node at (2.5, 1.5) {$\ldots$};

    \foreach \v/\list in {w1/{v2, v5, v4, v3}, w2/{v0, v5, v3, v4}, w3/{v0, v5, v1, v2}, w4/{v3, v0, v1, v2}} 
    \foreach \x in \list \draw (\x) -- (\v);
  \end{tikzpicture}
  \caption{The gadget.  Edges among vertices in the shadowed area, $\{v_{1}, v_{2}, \ldots, v_{k}\}$, are omitted for clarity.}
  \label{fig:gadget}
\end{figure}

For~$k \ge 2$, we define the gadget~$D_{k}$ as a subgraph of~$\overline{S_{i}}$ obtained by removing one simplicial vertex.
An example is illustrated in Figure~\ref{fig:gadget}, where the simplicial vertex adjacent to~$v_{2}, \ldots, v_{k-1}$ is removed.
The gadget~$D_{k}$ consists of~$2 k - 1$ vertices.  
The vertex set~$\{v_{1}, v_{2}, \ldots, v_{k}\}$ is a clique, while $\{w_{1}, w_{2}, \ldots, w_{k-1}\}$ is an independent set.
The vertex~$w_{i}, i = 1, \ldots, k-1$, is adjacent to~$\{v_{1}, \ldots, v_{i - 1}\}$ and~$\{v_{i + 2}, \ldots, v_{k}\}$.
For~$k \ge 2$, we define the graph~$S^{1}_{k}$ as follows.  We take a sun and the gadget~$D_{k}$, identify two degree-four vertices of the sun, chosen arbitrarily, and~$v_{1}$ and~$v_{k}$, and add all edges between~$\{v_{2}, \ldots, v_{k-1}\}$ and other vertices in the sun.
The graph~$S^{2}_{k}$ is defined similarly.
We take a rising sun and a gadget, identify the degree-five vertices of the rising sun and~$v_{1}$ and~$v_{k}$, and add all edges between~$\{v_{2}, \ldots, v_{k-1}\}$ and other vertices in the rising sun.
The graphs~$S^{1}_{k}$ and~$S^{2}_{k}$ for~$k = 2, 3, 4$ are shown in Figure~\ref{fig:split-non-helly-2}(c--e) and Figure~\ref{fig:split-non-helly-2}(f--h), respectively.
We leave it to the reader to verify that none of them is a circular-arc graph.\footnote{
To verify that all proper induced subgraphs of~$S^{1}_{k}$, $S^{2}_{k}, k \ge 2$ are Helly circular-arc graphs, one may draw circular-arc models for each of them.
An alternative approach is to use Theorem~\ref{thm:helly-main}.
If~$G = S^{1}_{k}$, then the graph~$G^{K}$ consists of the~$\dag$ graph of order~$k+4$, and~$w_{1}$, $\ldots$, $w_{k =1}$, where $w_{i}$ is adjacent to~$u_{0}, v_{i}$, and~$v_{i+1}$.  
The graph~$(G - w_{i})^{K}$ is isomorphic to the subgraph of~$(G)^{K} - w_{i}$ with the edge~$v_{i}v_{i+1}$ removed, while $(G - v)^{K\setminus \{v\}}$ for any~$v$ in the $\dag$ graph is isomorphic to the subgraph of~$(G)^{K} - v$.  Both are interval graphs, and thus all proper subgraphs of~$G$ are Helly circular-arc graphs.
It is similar for~$S^{2}_{k}$.
}

We are now ready to list minimal split graphs that are not Helly circular-arc graphs.
For the convenience of later references, we list the correspondence between them and minimal non-interval subgraphs of~$G^{K}$.

\begin{lemma}\label{lem:alternative}
  Let $G$ be a split graph with a split partition~$K\uplus S$.
  
  \begin{itemize}
  \item $G^{K}$ contains an induced sun if and only if $G$ contains an induced $\overline{S_{3}}$.
  \item $G^{K}$ contains an induced $C_{\ell}, \ell \ge 4$, 
    if and only if $G$ contains an induced $\overline{S_{\ell}}$.
  \item $G^{K}$ contains an induced and witnessed long claw if and only if $G$ contains an induced Figure~\ref{fig:long-claw-derived}.
  \item $G^{K}$ contains an induced and witnessed whipping top if and only if $G$ contains an induced Figure~\ref{fig:whipping-top-derived}.
  \item $G^{K}$ contains an induced and witnessed $\dag$ graph with $k$ vertices if and only if $G$ contains an induced $S^{1}_{k-4}$.
  \item $G^{K}$ contains an induced and witnessed $\ddag$ graph with $k$ vertices, with $k \ge 7$, if and only if $G$ contains an induced $S^{2}_{k-5}$.
  \end{itemize}
\end{lemma}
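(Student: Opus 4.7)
The plan is to prove each of the six bullets by analyzing where the vertices of the specified induced subgraph sit in the split partition $K\uplus S$, and then translating adjacencies back and forth via the explicit rules defining $G^K$: $S$ is independent in $G^K$; an edge between $v\in K$ and $x\in S$ in $G^K$ corresponds to a non-edge of $G$; and an edge $uv$ within $K$ in $G^K$ records the existence of a common non-neighbor of $u,v$ in $G$. I will also use throughout that every vertex of $S$ is simplicial in $G$, since $N_G[x]\subseteq\{x\}\cup K$ is always a clique.

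For the forward direction of each bullet, the triangles of the target subgraph of $G^K$ are forced into $K$, because $S$ is independent in $G^K$. For bullets 3--6, the witnessed hypothesis lets me apply Proposition~\ref{lem:simplicial-iff-i-helly} to further assume that every simplicial vertex of the subgraph lies in $S$. For bullet 1 (sun), I will apply Proposition~\ref{lem:simplicial-iff-i-helly} when the triangle of the sun is witnessed, and otherwise invoke Proposition~\ref{lem:h-maxclique} to obtain a different induced sun whose three degree-two vertices all lie in $S$. Once this placement is achieved, reading the $G$-adjacencies off the $G^K$-adjacencies produces the claimed induced subgraph in $G$: each maximal clique of the subgraph that lies entirely in $K$ receives, via its witness in $S$, the independent-set side of the resulting $\overline{S_3}$, Figure~\ref{fig:long-claw-derived}, Figure~\ref{fig:whipping-top-derived}, $S^1_{k-4}$, or $S^2_{k-5}$.

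For the reverse direction of each bullet, I will show that the specified subgraph of $G$ forces a particular placement of its vertices in the split partition. For example, in $\overline{S_\ell}$ every clique-side vertex has two non-adjacent neighbors in the independent side, so it cannot be simplicial in $G$ and hence must lie in $K$; every independent-side vertex is non-adjacent in $G$ to two of its clique-side counterparts, so it cannot be in the clique $K$ and must lie in $S$. Analogous forced placements hold for the sun/rising sun portions of the graphs in Figures~\ref{fig:long-claw-derived}, \ref{fig:whipping-top-derived} and of $S^1_k$, $S^2_k$. The translation back into $G^K$-adjacencies then produces the corresponding (witnessed) subgraph, with the required witnesses supplied by the degree-two vertices of the sun/rising sun side.

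The main obstacle lies in bullet 2, since a hole has no simplicial vertex and Proposition~\ref{lem:simplicial-iff-i-helly} does not apply. Here I will argue directly: any $S$-vertex $v_i$ on a hole of $G^K$ would force its two hole-neighbors $v_{i-1}, v_{i+1}$ into $K$ and they would be non-adjacent in $G^K$ (at distance two on a hole of length at least four), so they must dominate $V(G)$, contradicting that $v_i$ is a common non-neighbor. Thus every hole vertex lies in $K$, and each hole edge $v_iv_{i+1}$ furnishes a common non-neighbor $w_i\in S$; the non-edges of the hole then force $w_i$ to be adjacent in $G$ to every $v_j$ with $j\notin\{i,i+1\}$ and the $w_i$'s to be pairwise distinct, so $\{v_i,w_i : i=1,\ldots,\ell\}$ induces $\overline{S_\ell}$ in $G$. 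For the converse of bullet 2, the possibility of additional common non-neighbors creating chords in the candidate $G^K$-hole is handled by taking $\ell$ minimum: any such chord would, by the forward direction, produce a strictly shorter $\overline{S_{\ell'}}$ in $G$, contradicting minimality.
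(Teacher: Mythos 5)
Your forward-direction argument is essentially the paper's proof: the non-simplicial vertices of the configuration are forced into $K$ (via Proposition~\ref{lem:trivial}, or directly from the independence of $S$ in $G^{K}$ for holes), Propositions~\ref{lem:h-maxclique} and~\ref{lem:simplicial-iff-i-helly} relocate the simplicial vertices into $S$, and then a witness attached to each maximal clique lying inside $K$ yields the claimed induced subgraph of $G$ once the $G$-adjacencies are read off. That is exactly what the paper does, and it is the only direction the paper proves---and the only direction it ever uses (in Theorem~\ref{thm:split-hcag}, Lemma~\ref{lem:interval-configurations}, and Theorem~\ref{thm:h-not-interval} the lemma is always invoked from $G^{K}$ to $G$).

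The gap is in your converse arguments. Adjacency of two vertices $u,v\in K$ in $G^{K}$ is the \emph{global} condition $N_{G}(u)\cup N_{G}(v)\ne V(G)$, so the $G^{K}$-adjacencies cannot be ``translated back'' from an induced subgraph of $G$: vertices of $G$ outside the copy of $\overline{S_{\ell}}$ (or of $S^{1}_{k}$, etc.) may supply extra common non-neighbours and insert edges into $G^{K}[K]$ that destroy the configuration. Your minimality device for the hole case does not repair this: a Hamiltonian cycle plus chords need not contain a shorter induced hole (it can be chordal), so no contradiction with the minimality of $\ell$ arises, and the witness of a chord $v_{i}v_{j}$ need not be adjacent to the intermediate $v_{m}$'s, so it does not directly produce a shorter $\overline{S_{\ell'}}$ either. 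Concretely, let $G$ be $\overline{S_{5}}$ together with one additional independent-set vertex $w'$ whose non-neighbourhood in $K=\{v_{1},\dots,v_{5}\}$ is exactly $\{v_{1},v_{3}\}$. Then $G$ contains an induced $\overline{S_{5}}$, but $G^{K}[K]$ is $C_{5}$ plus the chord $v_{1}v_{3}$; since every vertex of $S$ is simplicial in $G^{K}$ by Proposition~\ref{lem:trivial}, the only induced hole of $G^{K}$ is the $C_{4}$ on $\{v_{1},v_{3},v_{4},v_{5}\}$, and there is no induced $C_{5}$. So the fixed-$\ell$ converse of the second bullet cannot be proved as you propose, and the converses of the remaining bullets face the same obstruction (extra witnesses can likewise add chords that kill a long claw, a $\dag$, etc.). This is arguably an imprecision in the lemma's ``if and only if'' phrasing rather than a defect peculiar to your write-up, but your proposal commits to proving the converses and the argument offered for them does not go through.
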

\begin{proof}
  Let $F$ be the vertex set of this subgraph of~$G^{K}$.  We use the labels in Figure~\ref{fig:non-interval} when $G^{K}[F]$ is chordal.

  Sun.  We may assume that all simplicial vertices of~$G^{K}[F]$ are from~$S$; otherwise, we can find an alternative induced sun with this property using Proposition~\ref{lem:h-maxclique} or \ref{lem:simplicial-iff-i-helly}.
  The only neighbors of~$x_{1}$, $x_{2}$, and~$x_{3}$ in~$G[F]$ are $u_{2}$, $v_{1}$, and~$u_{1}$, respectively.
  Thus, $G[F]$ is isomorphic to a net.

  
  Hole.  Let it be denoted as~$v_{1}v_{2} \cdots v_{\ell}$.
  Note that $v_{i}\in K$ for all $i = 1, \ldots, \ell$ by Proposition~\ref{lem:trivial}.
  For~$i = 1, \ldots, \ell$, we take a witness~$x_{i}$ for edge~$v_{i} v_{(i+1)\mod \ell}$.  In~$G$, we have $N(x_{i})\cap F = F\setminus \{v_{i}, v_{(i+1)\mod \ell}\}$.  The subgraph~$G[\{v_{1}, \ldots v_{\ell}, x_{1}, \ldots x_{\ell}\}]$ is isomorphic to~$\overline{S_{\ell}}$.

  In the rest, $G^{K}[F]$ is witnessed.
  We may assume that all simplicial vertices of~$G^{K}[F]$ are from~$S$; otherwise, we can find an alternative induced subgraph of~$G^{K}$ isomorphic to $G^{K}[F]$ with this property using Proposition~\ref{lem:simplicial-iff-i-helly}.
  
  Long claw.
  For~$i = 1, \ldots, 3$, take a witness~$w_{i}$ for edge~$v_{0} v_{i}$.
  Then $G[F\cup \{w_{1}, w_{2}, w_{3}\}]$ is isomorphic to Figure~\ref{fig:long-claw-derived}, where the degrees of~$x_{i}$ and~$w_{i}$ are three and two, respectively.

  Whipping top.  We take a witness~$w_{1}$ of~$\{v_{0}, v_{1}, v_{2}\}$ and a witness~$w_{2}$ of~$\{v_{0}, v_{2}, v_{3}\}$.
  Then $G[F\cup \{w_{1}, w_{2}, w_{3}\}]$ is isomorphic to Figure~\ref{fig:whipping-top-derived}, where $w_{1}$ and~$w_{2}$ have degree one, $x_{1}$ and~$x_{3}$ have degree two, while $x_{2}$ has degree three.
  
  $\dag$ graph.
  For~$i = 1, \ldots, p$, where $p = k-4$, take a witness~$w_{i}$ of~$\{u_{0}, v_{i}, v_{i+1}\}$.
  In~$G$, the set~$\{u_{0}, v_{1}, v_{p}, x_{1}, x_{2}, x_{3}\}$ induces a sun, while $\{v_{1}, \ldots, v_{p}, w_{1}, \ldots, w_{p-1}\}$ induces the gadget~$D_{p-1}$.  Thus, $G[F\cup \{w_{1}, \ldots, w_{p-1}\}]$ is isomorphic to~$S^{1}_{k-4}$.

  The $\ddag$ graph is similar.%
\end{proof}
  
Lemma~\ref{lem:alternative} implies the main result of this section, i.e., the first part of Theorem~\ref{thm:main}.

\begin{theorem}\label{thm:split-hcag}
  A split graph~$G$ is a Helly circular-arc graph
  if and only if it does not contain any induced copy of
Figure~\ref{fig:long-claw-derived}, Figure~\ref{fig:whipping-top-derived},
 $S^{1}_{k}$, $S^{2}_{k}$,
 or~$\overline{S_{k+1}}, k \ge 2$.
\end{theorem}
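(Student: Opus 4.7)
My plan is to combine Theorem~\ref{thm:helly-main}---which says $G$ is a Helly circular-arc graph iff $G^K$ is an interval graph---with Lekkerkerker and Boland's Theorem~\ref{thm:lb} (whose minimal non-interval graphs are holes, the long claw, the whipping top, $\dag$ graphs, and $\ddag$ graphs), and then to translate the forbidden interval structures in $G^K$ back to forbidden induced subgraphs of $G$ via the dictionary of Lemma~\ref{lem:alternative}.

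The ``only if'' direction follows from the hereditary nature of the class of Helly circular-arc graphs: for each listed graph $F$, Lemma~\ref{lem:alternative} (applied in the forward direction) produces an induced non-interval subgraph of $F^K$---a hole or a sun when $F = \overline{S_{k+1}}$; a long claw, a whipping top, a $\dag$, or a $\ddag$ for the other four families---so $F^K$ is not an interval graph, and $F$ is not a Helly circular-arc graph by Theorem~\ref{thm:helly-main}.

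For the ``if'' direction I argue by contrapositive. Assume $G$ is not a Helly circular-arc graph, so by Theorem~\ref{thm:lb} the graph $G^K$ contains a minimal non-interval induced subgraph $F$. The main obstacle is that the non-hole, non-sun bullets of Lemma~\ref{lem:alternative} require $F$ to be \emph{witnessed}. To address it, I would first dispose of the case where $G^K$ contains an induced sun: the sun bullet of Lemma~\ref{lem:alternative} needs no witnessing and yields $\overline{S_3} = \overline{S_{k+1}}$ with $k = 2$. Otherwise $G^K$ has no induced sun, and then the contrapositive of Proposition~\ref{lem:h-maxclique} forces every clique of $G^K[K]$ to be witnessed, which in turn makes every induced subgraph of $G^K$ witnessed. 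Case-splitting on $F$: a hole $C_\ell$ gives $\overline{S_\ell} = \overline{S_{k+1}}$ with $k = \ell - 1 \geq 3$; a long claw gives Figure~\ref{fig:long-claw-derived}; a whipping top gives Figure~\ref{fig:whipping-top-derived}; a $\dag$ on $k \geq 6$ vertices gives $S^1_{k-4}$ with $k - 4 \geq 2$; and a $\ddag$ on $k \geq 7$ vertices gives $S^2_{k-5}$ with $k - 5 \geq 2$, the $\ddag$ on six vertices being the sun already handled. Beyond this vertex-count bookkeeping---verifying that the smallest $\dag$ has six vertices and the smallest non-sun $\ddag$ has seven---no new ideas are required past the lemmas and propositions already established in this section.
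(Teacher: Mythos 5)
Your proposal is correct and follows essentially the same route as the paper: reduce to $G^K$ via Theorem~\ref{thm:helly-main}, split off the induced-sun case, use the contrapositive of Proposition~\ref{lem:h-maxclique} to get that every minimal non-interval subgraph of $G^K$ is witnessed, and then translate via Lemma~\ref{lem:alternative}. The only difference is that you spell out the necessity direction and the vertex-count bookkeeping that the paper leaves implicit.
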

\begin{proof}
  Since none of the listed graphs is a Helly circular-arc graph, the necessity is straightforward.
  For sufficiency, we show that if $G$ not a Helly circular-arc graph, then it contains one of the list graphs as an induced subgraph.
 By Theorem~\ref{thm:helly-main}, $G^{K}$ is not an interval graph, and it contains an induced non-interval subgraph~$F$.
 If $G^{K}$ contains an induced sun, then $G$ contains $\overline{S_{3}}$.
 Otherwise, $F$ is witnessed by Proposition~\ref{lem:h-maxclique}, and the statement follows from Lemma~\ref{lem:alternative}.
\end{proof}

\section{Split graphs that are not circular-arc graphs}
\label{sec:cag}

Let $G$ be a split graph with no universal vertices, and~$K\uplus S$ a split partition of~$G$.
We may assume that there are no \emph{true twins}, two vertices~$x$ and~$y$ with the same closed neighborhood.
If there are two true twins, then $G$ is a circular-arc graph if and only if $G - y$ is a circular-arc graph.
We fix a simplicial vertex~$s$ of~$G$ and we use $H$ to denote $G^{N[s]}$.  We further partition $K$ into
\[
  K_{s} = N_{G}(s) \text{ and } K_{o} = K\setminus N_{G}(s).
\]
The structure of~$H$ can be summarized as follows.
The edges among vertices in~$V(G)\setminus N_{G}[s]$ are the same as in~$G$; in particular, $S\setminus\{s\}$ remains an independent set, and~$K_{o}$ remains a clique.
A pair of vertices~$v\in K_{s}$ and~$x\in S\setminus \{s\}$ are adjacent in~$H$ if and only if they are not adjacent in~$G$.
A pair of vertices~$u, v\in K_{s}$ are adjacent in~$H$ if and only if there exists a vertex adjacent to neither of them, i.e., $N_{G}(u)\cup N_{G}(v)\ne V(G)$.
A pair of vertices~$u\in K_{s}$ and~$v\in K_{o}$ are adjacent in~$H$ if and only if there exists a vertex adjacent to~$v$ but not $u$ in~$G$.  Note that these witnesses must be from~$S$, and we do not need a witness for an edge between two vertices in~$K_{o}$.

\subsection{Forbidden configurations}

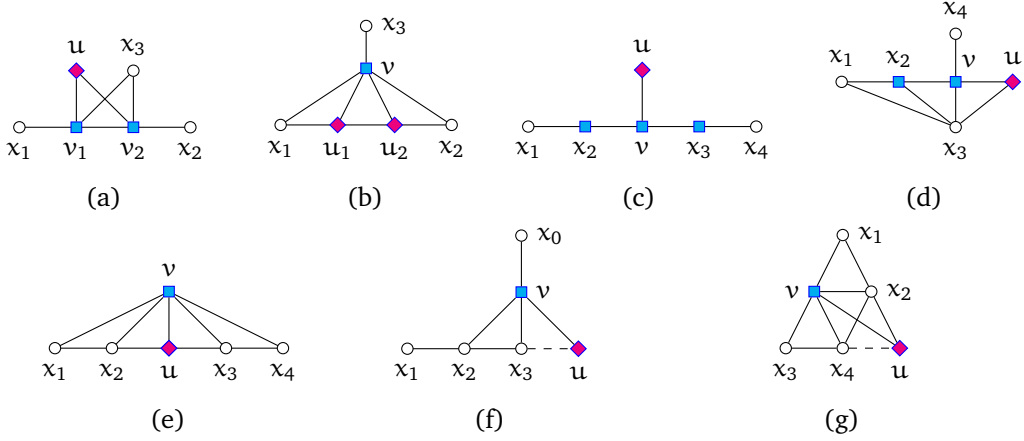
\begin{figure}[h]
  \centering \small
\begin{subfigure}[b]{0.2\linewidth}
    \centering
    \begin{tikzpicture}[scale=.75]
      \draw (1, 0) -- (4, 0);
      \foreach[count=\x from 2] \t/\l/\p in {b-/u/above, empty /x_{3}/below}  \node[\t vertex, "$\l$"] (\t) at (\x, 1) {};
      
      \foreach[count=\i] \t/\l in {empty /x_{1}, a-/v_{1}, a-/v_{2}, empty /x_{2}} {
        \node[\t vertex, "$\l$" below] (u\i) at (\i, 0) {};
      }
      \foreach \i in {2, 3} \draw (b-) -- (u\i) -- (empty );
    \end{tikzpicture}
    \caption{}
    \label{fig:configuration-7}
  \end{subfigure}
  \,
  \begin{subfigure}[b]{0.2\linewidth}
    \centering
    \begin{tikzpicture}[scale=.75]
      \draw (1, 0) -- (4, 0);
      \node[a-vertex, "$v$" right] (a) at (2.5, 1) {};
      \draw (a) -- ++(0, .75) node[empty vertex, "$x_{3}$" right] {};

      \foreach[count=\i] \t/\l in {empty /x_{1}, b-/u_{1}, b-/u_{2}, empty /x_{2}} {
        \draw (a) -- (\i, 0) node[\t vertex, "$\l$" below] (u\i) {};
      }
    \end{tikzpicture}
    \caption{}
    \label{fig:configuration-8}
  \end{subfigure}
  \,
  \begin{subfigure}[b]{0.22\linewidth}
    \centering
    \begin{tikzpicture}[scale=.75]
      \draw (-2.,0) node[empty vertex, "$x_1$" below] (x1) {} -- (2.,0) node[empty vertex, "$x_4$" below] (x2) {};
      \foreach[count=\i] \l in {x_2, v, x_3}
      \node[a-vertex, "$\l$" below] (v\i) at ({\i*1-2}, 0) {};
      \draw (90:1) node[b-vertex, "$u$"] (u) {} -- (v2);
    \end{tikzpicture}
    \caption{}
    \label{fig:configuration-1}
  \end{subfigure}
  \,
  \begin{subfigure}[b]{0.22\linewidth}
    \centering
    \begin{tikzpicture}[scale=.75]
      \draw (1, 1) -- (4, 1);

       \node[empty vertex, "$x_{3}$" below] (v4) at (3, 0.2) {};
      \foreach[count=\i] \t/\v/\x in {empty /x_1/1, a-/x_2/2, a-/{\quad v}/3.5, b-/u/5} {
        \draw (v4) -- (\i, 1) node[\t vertex, "$\v$"] (u\i) {};
      }

      \draw (u3) -- ++(0, .85) node[empty vertex, "$x_{4}$"] (x2) {};
     \end{tikzpicture}
    \caption{}
    \label{fig:configuration-2}
  \end{subfigure}

  \begin{subfigure}[b]{0.25\linewidth}
    \centering
    \begin{tikzpicture}[scale=.75]
      \draw (-2.,0) -- (2.,0);
      \node[a-vertex, "$v$"] (a) at (0, 1.) {};
      \draw (a) -- (0, 0) node[b-vertex, "$u$" below] (u) {};
      \node[b-vertex] (c) at (0, 0) {};
      \foreach[count=\j] \x in {-2, -1, 1, 2} {
        \draw (a) -- (\x, 0) node[empty vertex, "$x_\j$" below] (v\j) {};
      }
     \end{tikzpicture}
    \caption{}
    \label{fig:configuration-3}
  \end{subfigure}
  \,
  \begin{subfigure}[b]{0.25\linewidth}
    \centering
    \begin{tikzpicture}[scale=.75]
      \node[a-vertex, "$v$" right] (x) at (3., 1) {};
      \node[empty vertex, "$x_{0}$" right] (u0) at (3., 2) {};

      \draw (1, 0) -- (3, 0);
      \foreach[count=\i] \t/\v in {empty /x_{1}, empty /x_{2}, empty /x_{3}, b-/u} {
        \node[\t vertex, "$\v$" below] (u\i) at (\i, 0) {};
      }
      \draw[dashed] (u3) -- (u4);
      \foreach \i in {0, 2, 3, 4} \draw (x) -- (u\i);
    \end{tikzpicture}
    \caption{}
    \label{fig:configuration-4}
  \end{subfigure}
  \,
  \begin{subfigure}[b]{0.3\linewidth}
    \centering
    \begin{tikzpicture}[scale=.75]
      \foreach \i in {1, 2} {
        \node[empty vertex, "$x_\i$" right] (x\i) at (1.5+.5*\i, 3-\i) {};
      }
      \foreach \i in {3, 4} 
      \node[empty vertex, "$x_\i$" below] (x\i) at (\i-2, 0) {};
      \node[a-vertex, "$v$" left] (v) at (1.5, 1) {};
      \node[b-vertex, "$u$" below] (x5) at (3., 0) {};

      \foreach \i in {1, 4, 5} \draw (x2) -- (x\i);
      \foreach \i in {1, ..., 5} \draw (v) -- (x\i);
      \draw (x3) -- (x4) (x4) edge[dashed] (x5);
    \end{tikzpicture}
    \caption{}
    \label{fig:configuration-6}
  \end{subfigure}
  \caption{Forbidden configurations of~$H$ that are interval graphs.
    The square and rhombus nodes are from~$K_{s}$ and~$K_{o}$, respectively, while round nodes are not specified.
    A dashed line indicates a path of an arbitrary length.  There are at least six vertices in (f).
    }
  \label{fig:non-cag-to-interval}
\end{figure}

Unlike Theorem~\ref{thm:helly-main}, $H$ being an interval graph is insufficient for~$G$ to be a circular-arc graph.
For example, when $G$ is a net$^\star$ and~$s$ is a degree-one vertex of~$G$, the graph~$H$ is an interval graph, consisting of the graph in Figure~\ref{fig:configuration-8} and a universal vertex.
To satisfy condition \eqref{eq:1}, $H$ cannot contain any graph in Figure~\ref{fig:non-cag-to-interval}.
All these graphs are proper subgraphs of minimal non-interval graphs (Figure~\ref{fig:non-interval}), and in each of them, there are two or more vertices earmarked from~$K_{s}$ and~$K_{o}$.
We say that $H$ \emph{contains an annotated copy} of a graph~$F$ if there exists an isomorphism between~$F$ and an induced subgraph of~$H$ that preserves the vertex partition; i.e., a vertex of~$K_{s}$ or~$K_{o}$ must be mapped to a vertex in the same set.

\begin{proposition}\label{lem:necessity-forbidden-configurations}
  If $H$ contains an annotated copy of any graph in Figure~\ref{fig:non-cag-to-interval}, then it does not satisfies condition~\eqref{eq:1}.
\end{proposition}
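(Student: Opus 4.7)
The plan is to argue by contradiction. Suppose $H$ admits an interval model $\mathcal{I}$ satisfying \eqref{eq:1}. Since the flipped clique here is $K = N[s]$, condition \eqref{eq:1} applied to any pair $v \in K_s$ (square) and $u \in K_o$ (rhombus)---both of which lie in the clique $K$ of $G$ and are therefore adjacent in $G$---forces $I(v) \not\supseteq I(u)$; this is exactly the refined condition of Theorem~\ref{thm:main-split} for split graphs. I will show that each of the six configurations in Figure~\ref{fig:non-cag-to-interval} forces such a forbidden containment $I(v) \supseteq I(u)$ for some square $v \in K_s$ and some rhombus $u \in K_o$, yielding the contradiction.

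For each configuration I fix an orientation of the real line and order the relevant intervals by their left endpoints, using two elementary properties of any interval representation: (i) if $u$ is non-adjacent to two vertices $p, q$ that are both adjacent to $v$, and $I(p)$ lies to the left of $I(u)$ while $I(q)$ lies to the right, then $a_v \le b_p < a_u$ and $b_u < a_q \le b_v$, so $I(u) \subset I(v)$; (ii) a vertex $w$ whose only named neighbor is $v$ has $I(w)$ confined to the pocket of $I(v)$ uncovered by $v$'s other labelled neighbors. Configurations (c) and (e) follow directly: in (c), $u$ is pendant on the middle vertex of the $P_5$ $x_1 x_2 v x_3 x_4$, so $I(u) \subseteq I(v)$ by (ii); in (e), the non-adjacencies $u x_1, u x_4$ together with $v x_1, v x_4$ pin $I(u)$ inside $I(v)$ by (i). Configuration (f) reduces to the same endpoint count after a brief case analysis using the triangle $\{x_2, x_4, u\}$ and the pendants $x_1, x_3$ to show that $x_1$ and $x_3$ may be placed on opposite sides of $I(u)$.

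Configuration (a) exploits the twin structure: $u$ and the round vertex $x_3$ share the neighborhood $\{v_1, v_2\}$ and have disjoint intervals, both confined to the window $(b_{x_1}, a_{x_2})$; the leftmost of the two is then contained in $I(v_1)$ and the rightmost in $I(v_2)$, so $I(v_1) \supseteq I(u)$ or $I(v_2) \supseteq I(u)$. Configurations (b) and (d) are handled by a pigeonhole on an extra pendant ($x_3$ on $v$ in (b), $x_4$ on $v$ in (d)). In (b), if $I(u_i) \not\subseteq I(v)$ for each $i$, then the non-adjacencies $x_1 u_2$ and $x_2 u_1$ force $a_{u_1} < a_v$ and $b_{u_2} > b_v$, and the edge $u_1 u_2$ then gives $I(u_1) \cup I(u_2) \supseteq I(v)$; but $I(x_3) \subseteq I(v)$ must be disjoint from both $I(u_1)$ and $I(u_2)$, a contradiction. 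A parallel argument in (d) shows that $I(u) \not\subseteq I(v)$ would force $I(x_2) \cup I(x_3) \cup I(u)$ to cover $I(v)$, leaving no room for the pendant $x_4$.

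The main obstacle is the pigeonhole step in configuration (b): the required containment is global rather than local, and emerges only from the fact that the pendant $x_3$ has nowhere to sit inside $I(v) \setminus (I(u_1) \cup I(u_2))$. A secondary technical point is justifying the left-right ``WLOG'' in (a) and (f), both of which follow from the symmetry of the drawn subgraphs under reflection.
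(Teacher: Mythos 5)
Your proposal is correct and follows essentially the same route as the paper: the paper's own proof is just a three-line assertion that in any interval model of each configuration some interval for a $K_s$-vertex is forced to contain the interval for a $K_o$-vertex (for (a) one of $I(v_1),I(v_2)\supseteq I(u)$; for (b) $I(v)$ contains one of $I(u_1),I(u_2)$; for the rest $I(v)\supseteq I(u)$), which violates \eqref{eq:1} since the two vertices lie in the clique of $G$ and are therefore adjacent. Your write-up supplies the interval-combinatorial details the paper leaves implicit; the only cosmetic slips are that in~(f) the vertices $x_1$ and $x_3$ \emph{must} (not ``may'') lie on opposite sides of $I(u)$, and $\{x_2,x_4,u\}$ need not be a triangle when the dashed path is long, though your argument does not actually depend on that.
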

\begin{proof}
  In any interval model of Figure~\ref{fig:configuration-7}, at least one of the intervals for~$v_{1}$ and~$v_{2}$ properly contains the interval for~$u$.
  In any interval model of Figure~\ref{fig:configuration-8}, the interval for~$v$ properly contains at least one of the intervals for~$u_{1}$ and~$u_{2}$.
  In any interval model of the other graphs in Figure~\ref{fig:non-cag-to-interval}, the interval for~$v$ properly contains the interval for~$u$.
\end{proof}

The rest of this subsection is devoted to proving the other direction of Proposition~\ref{lem:necessity-forbidden-configurations}; i.e., the exclusion of the graphs in Figure~\ref{fig:non-cag-to-interval} ensures condition \eqref{eq:1}.
Condition~\eqref{eq:1} is better understood through clique paths of the graph~$H$.
Let us take an arbitrary interval model of~$H$.  If for each of~$n$ (not necessarily distinct) left endpoints of the $n$ intervals, we take the set of vertices whose intervals contain this point, then we end with $n$ cliques.  We leave it to the reader to verify that they include all the maximal cliques of~$H$.  If we list the distinct maximal cliques from left to right, sorted by the endpoints that we use to define these cliques, then we can see that for any~$v\in V(H)$, the maximal cliques of~$H$ containing $v$ appear consecutively.  We say that such a linear arrangement of maximal cliques is a \emph{clique path} of~$H$.  On the other hand, given a clique path~$\langle K_{1}, K_{2}, \ldots, K_{\ell}\rangle$ for an interval graph~$H$ with $\ell$ maximal cliques, for each vertex~$v$ we can define an interval~$[\lk(v), \rk(v)]$, where $\lk(v)$ and~$\rk(v)$ are the indices of the first and, respectively, last maximal cliques containing $v$.  One may easily see that they define an interval model for~$H$; see, e.g., Figure~\ref{fig:clique-path-interval-model}.  Therefore, clique paths and interval models
are interchangeable, and when we illustrate clique paths, we always use the way in Figure~\ref{fig:clique-path-interval-model}.

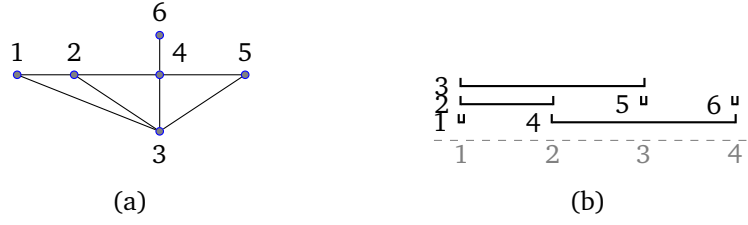
\begin{figure}[h]
  \centering\small
  \begin{subfigure}[b]{.35\linewidth}
    \centering
    \begin{tikzpicture}[scale=.75]
      \draw (1, 1) -- (5, 1);
      \node[filled vertex, "$3$" below] (v4) at (3.5, 0) {};
      \foreach[count=\i] \v/\x in {1/1, 2/2, /3.5, 5/5} {
        \draw (v4) -- (\x, 1) node[filled vertex, "$\v$"] (u\i) {};
      }
      \node[above right = 0mm of u3] {$4$};
      \draw (u3) -- ++(0, .7) node[filled vertex, "$6$"] (x2) {};
    \end{tikzpicture}
    \caption{}
  \end{subfigure}  
  \quad
  \begin{subfigure}[b]{.35\linewidth}
    \centering
    \begin{tikzpicture}[scale=1.2]
      \begin{scope}[every path/.style={{|[right]}-{|[left]}}]
        \foreach \x in {1, ..., 4} \node[gray] at (\x-1, -.15) {$\x$};
        \foreach[count=\i] \l/\r/\y/\c in {-.02/.02/1/, 0/1/2/, 0/2/3/, 1/3/1/, 1.98/2.02/2/, 2.98/3.02/2/}{
          \draw[\c, thick] (\l-.02, \y/5) node[left] {$\i$} to (\r+.02, \y/5);
        }
      \end{scope}
      \draw[dashed, gray] (-.3, 0) -- (3.2, 0);
    \end{tikzpicture}
    \caption{}
  \end{subfigure}  
  \caption{An interval graph and its clique path represented as an interval model.}
  \label{fig:clique-path-interval-model}
\end{figure}

Condition~\eqref{eq:1} can be translated into the language of clique paths as: at least one of~$\lk(u) = \lk(v)$ and~$\rk(u) = \rk(v)$ for all pairs~$v\in K_{s}$ and~$u\in K_{o}$.

\begin{proposition}\label{lem:clique-path}
  The graph~$H$ satisfies \eqref{eq:1} if and only if there exists a clique path of~$H$ such that
  \begin{equation}
    \label{eq:2}
    N_{H}(v) \cap K_{o} \subseteq K_{\lk(v)}\cup K_{\rk(v)} \text{ for every } v\in K_{s}.
  \end{equation}
\end{proposition}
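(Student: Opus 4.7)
The plan is to prove both implications via the equivalence between interval models and clique paths. From any interval model one reads off a clique path $K_1, \ldots, K_\ell$; conversely, given a clique path we realize interval models by placing the maximal cliques at consecutive positions, with the relative order of endpoints within each slot between consecutive cliques chosen freely.

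For the forward direction, take the clique path associated with an interval model of $H$ satisfying \eqref{eq:1}. For any $v \in K_s$ and $u \in K_o$ adjacent in $H$, both vertices lie in the split clique of $G$, so $uv \in E(G)$; condition \eqref{eq:1} then forces $I(u) \not\subseteq I(v)$, and combined with $I(u) \cap I(v) \neq \emptyset$ this means that $u$'s interval extends strictly beyond $v$'s on at least one side, yielding $u \in K_{\lk(v)}$ or $u \in K_{\rk(v)}$, which is exactly \eqref{eq:2}.

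The backward direction relies on a structural claim: in $H$, the vertex $s$ belongs to every maximal clique containing a vertex of $V(H) \setminus K$. Suppose some maximal clique $C$ contains $u \in V(H) \setminus K$ but not $s$; by maximality there is $w \in C$ with $sw \notin E(H)$. Since $s$ is adjacent in $H$ to every vertex of $V(H) \setminus K$, we must have $w \in K_s$, and $sw \notin E(H)$ unpacks to $N_G(s) \cup N_G(w) = V(G)$, which forces $w$ to be adjacent in $G$ to every vertex of $S$; hence $N_G[w] = V(G)$, so $uw \notin E(H)$, contradicting $u, w \in C$.

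Given a clique path satisfying \eqref{eq:2}, I construct an interval model by ordering the endpoints within each slot so that at each left-slot the order reads $s$, then $K_o$-vertices, then $K_s$-vertices, then $S \setminus \{s\}$-vertices, with the reverse order at each right-slot. Verifying \eqref{eq:1} splits into three cases. For $v = s$ and $u \in V(H) \setminus K$, the structural claim gives $[\lk(u), \rk(u)] \subseteq [\lk(s), \rk(s)]$, and the orderings force $\lp(s) \le \lp(u)$ and $\rp(s) \ge \rp(u)$. For $v \in K_s$ and $u \in K_o$ adjacent in $H$, condition \eqref{eq:2} places $u$ in $K_{\lk(v)} \cup K_{\rk(v)}$, and the orderings ensure $\lp(u) < \lp(v)$ or $\rp(u) > \rp(v)$, breaking any containment. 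For $v \in K_s$ and a simplicial $u \in S \setminus \{s\}$ adjacent in $H$, the equality $\lk(u) = \rk(u)$ together with the orderings gives $I(v) \supseteq I(u)$. Non-adjacent pairs yield disjoint intervals automatically. The main obstacle is establishing the structural claim about $s$ and checking that the slot orderings handle all cases consistently, without cyclic conflicts.
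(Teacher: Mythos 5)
Your proof is correct and follows essentially the same route as the paper: the necessity argument is identical, and your slot-ordering of endpoints ($s$, then $K_o$, then $K_s$, then $S\setminus\{s\}$ at left ends, reversed at right ends) is just a combinatorial rephrasing of the paper's construction $I(x)=[\lk(x)-\tfrac{1}{3},\,\rk(x)+\tfrac{1}{3}]$ for $x\in K_o\cup\{s\}$. Your structural claim about $s$ is handled in the paper by the standing assumption that $G$ has no universal vertices, which makes $s$ universal in $H$, so $I(s)$ contains every other interval.
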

\begin{proof}
  For the sufficiency, we show that $H$ satisfies \eqref{eq:1} with the following interval model:
  \[
    I(x) =
    \begin{cases}
      [\lk(x) - {1\over 3}, \rk(x) + {1\over 3}] & \text{if } x\in K_{o}\cup\{s\}, \\
      [\lk(x), \rk(x)] & \text{otherwise.}
    \end{cases}
  \]
  The vertex~$s$ is universal, and~$I(s)$ contains all other intervals.
  Now consider a pair of vertices~$v\in K_{s}$ and~$u\in N_{H}(v)\setminus K_{s}$.
  If~$u\in S$, then $uv\not\in E(G)$ and~$I(u)\subset I(v)$ by Proposition~\ref{lem:trivial}.
  Otherwise, $uv\in E(G)$ and~$I(u)\setminus I(v)$ contains at least one of~$[\lk(x) - {1\over 3}, \lk(x)]$ and~$[\rk(x), \rk(x) + {1\over 3}]$.
  
  For necessity, suppose $H$ satisfies \eqref{eq:1}.
  We use the clique path obtained from an interval model specified in \eqref{eq:1}.
  Let $u$ be a vertex in~$N_{H}(v)\cap K_{o}$.
  Since $u$ and~$v$ are adjacent in~$G$, it follows $I(u)\not\subseteq I(v)$.  If the left endpoint of~$u$ is not in~$I(v)$, then $u\in K_{\lk(v)}$.
  Otherwise, the right endpoint of~$u$ is not in~$I(v)$, and~$u\in K_{\rk(v)}$.
\end{proof}

We need to introduce a few definitions and recall known facts.
A subset~$M$ of vertices forms a \emph{module} of~$H$ if for any pair of vertices~$u,v \in M$, a vertex~$x\in V(H)\setminus M$ is adjacent to~$u$ if and only if it is adjacent to~$v$ as well; e.g., $\{x_{1}, u_{1}, u_{2}, x_{2}\}$ of the graph in Figure~\ref{fig:configuration-8}.
The set~$V(H)$ and all singleton vertex sets are modules, called \emph{trivial}.
We say that a graph is \emph{quasi-prime} if every nontrivial module is a clique.

\begin{theorem}[\cite{hsu-95-independent-set-cag, cao-21-interval-recognition}]
  \label{thm:unique-clique-path}
  An interval graph that is quasi-prime has a unique clique path, up to full reversal.
\end{theorem}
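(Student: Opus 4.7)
The plan is to argue by contradiction: assume $H$ is a quasi-prime interval graph admitting two clique paths $\mathcal{P} = \langle K_1, \ldots, K_\ell\rangle$ and $\mathcal{P}' = \langle K'_1, \ldots, K'_\ell\rangle$ that are neither equal nor reverses of each other, and extract from this discrepancy a nontrivial non-clique module of $H$, which will contradict the hypothesis.

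First, I would invoke the PQ-tree $T$ of $H$, which is known to encode precisely the set of clique paths of an interval graph: admissible arrangements arise by reversing children of Q-nodes and arbitrarily permuting children of P-nodes. Since $\mathcal{P}$ and $\mathcal{P}'$ are not related by a global reversal, $T$ must contain either a P-node realizing a nontrivial permutation between the two arrangements, or a subtree whose reversal is decoupled from the global orientation. I would pick a deepest node $p$ at which $\mathcal{P}$ and $\mathcal{P}'$ disagree beyond a reversal; by maximality of depth, $p$ is either a P-node with at least two permutable children or a Q-node whose children can be reversed independently of the parent.

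Second, let $\sigma_1, \ldots, \sigma_k$ (with $k\ge 2$) be the ordered contiguous blocks of maximal cliques assigned to the children of $p$. I would define $M$ to be the set of vertices of $H$ whose interval $[\lk(v),\rk(v)]$ is entirely contained in the span of some single $\sigma_i$. The claim is that $M$ is a module of $H$: if $w\notin M$, then either $w$'s interval is disjoint from the span of $p$, in which case $w$ is nonadjacent to every vertex of $M$, or $w$'s interval meets the span of $p$, and then because the children of $p$ are freely permutable or independently reversible, any clique-path condition forces $w$'s interval to cover the span of every child of $p$; consequently $w$ is adjacent to every vertex of $M$. Nontriviality of $M$ is immediate: each $\sigma_i$ contributes at least one vertex (otherwise the child would collapse), so $|M|\ge k\ge 2$, and at least one vertex lies outside $M$ (otherwise $p$ would be the root and the discrepancy would be vacuous).

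Third, I would verify that $M$ is not a clique: pick $u\in M$ with interval inside $\sigma_i$ and $v\in M$ with interval inside $\sigma_j$ for distinct $i,j$; their interval ranges are disjoint, so $u$ and $v$ are nonadjacent in $H$. Hence $M$ is a nontrivial non-clique module, contradicting quasi-primeness. The main obstacle is the PQ-tree step, namely justifying that two clique paths differing beyond a global reversal must differ at some internal node with at least two permutable subparts; this is exactly the content of the PQ-tree characterization of clique paths of interval graphs. A self-contained alternative is to take the earliest index $i$ at which $\mathcal{P}$ and $\mathcal{P}'$ disagree, let $j$ be the first index after $i$ at which they re-synchronize on the same maximal clique, and then build $M$ directly from the vertices living strictly inside positions $i,\ldots,j$; the module and non-clique arguments then go through verbatim. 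Either route rests on the same principle: the ``reshuffling region'' between two inequivalent clique paths is captured by a module of $H$ whose internal nonadjacencies force it to fail cliqueness.
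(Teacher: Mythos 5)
First, a point of comparison: the paper does not prove Theorem~\ref{thm:unique-clique-path} at all---it is imported from Hsu and from Cao---so there is no in-paper proof to match your argument against. The closest in-paper ingredient is Proposition~\ref{lem:modules-interval}, which is essentially the contrapositive of the uniqueness statement restricted to a single independently reorderable segment of a clique path, and which is the tool your proof ultimately needs.

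Your overall strategy (a discrepancy between two clique paths that is not a global reversal must yield a nontrivial non-clique module) is the right one, and the P-node branch is sound modulo the standard PQ-tree facts you invoke. The genuine gap is in the Q-node branch. You define $M$ as the set of vertices whose clique interval is contained in the span of a \emph{single} child $\sigma_i$ of the chosen node $p$, and you claim that any vertex $w$ meeting the span of $p$ but not in $M$ must cover the span of \emph{every} child. That forcing step relies on the children being freely permutable, which holds for P-nodes but fails for Q-nodes: a Q-node's children can only be reversed as a whole sequence, so a vertex may legitimately occur in several consecutive children without covering all of them (for instance, a vertex whose set of maximal cliques is exactly $\sigma_2\cup\sigma_3$ in a four-child Q-node remains consecutive under the reversal). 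Such a $w$ is adjacent to the vertices of $M$ living inside $\sigma_2$ and nonadjacent to those living inside $\sigma_1$, so your $M$ is not a module. The repair is to drop the per-child decomposition in this case and take $M$ to be the interior of the \emph{entire} span of the (non-root) node, i.e., $\bigcup_{i=p}^{q}K_i\setminus(K_p\cap K_q)$ exactly as in Proposition~\ref{lem:modules-interval}; independent reversibility of that segment is what gives the two boundary conditions $K_{p-1}\cap K_p\subseteq K_q$ and $K_q\cap K_{q+1}\subseteq K_p$, after which that proposition hands you the nontrivial non-clique module. Your ``self-contained alternative'' (first disagreement to first resynchronization) is in fact this very argument and is the better route, but as written it is a two-sentence sketch, and the assertion that the module and non-clique checks ``go through verbatim'' is precisely the boundary analysis that still has to be done. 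A second, smaller gap: if the offending node is the root P-node and no vertex lies outside its children, your $M$ equals $V(H)$ and is trivial; the discrepancy is not vacuous there (the graph is merely disconnected), and you should instead take the union of the vertex sets of two children as the witnessing module.
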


The following is straightforward.

\begin{proposition}\label{lem:modules-interval}
  Let $\langle K_{1}, \ldots, K_{\ell}\rangle$ be a clique path of an interval graph~$H$.
  If $H$ is quasi-prime, then there cannot be distinct indices~$p, q$ with $[p, q]\subset [1, \ell]$ such that
both $K_{p - 1}\cap K_{p}\setminus K_{q}$ and~$K_{q}\cap K_{q + 1}\setminus K_{p}$ are empty, where $K_{0} = K_{\ell+1} = \emptyset$.
\end{proposition}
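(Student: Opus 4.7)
The plan is to proceed by contradiction: assume such indices exist (WLOG $p<q$) and exhibit a nontrivial non-clique module of $H$, contradicting quasi-primality. The natural candidate is
\[
  M := \{v \in V(H) : \lk(v) \geq p \text{ and } \rk(v) \leq q\},
\]
the set of vertices whose interval is fully contained in the window $[p,q]$.

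The first step is to verify that $M$ is a module. Pick any $x \notin M$, so $\lk(x) < p$ or $\rk(x) > q$. If $x$ misses the window entirely (i.e.\ $\rk(x) < p$ or $\lk(x) > q$), then $x$ shares no clique with any member of $M$, so it is nonadjacent to all of $M$. Otherwise $x$ straddles a boundary of the window: if $\lk(x) < p \leq \rk(x)$, then $x \in K_{p-1} \cap K_p$, and the hypothesis $K_{p-1} \cap K_p \subseteq K_q$ forces $x \in K_q$, so $x$ appears in every clique $K_p, \ldots, K_q$ and is therefore adjacent to every vertex of $M$. The symmetric case $\lk(x) \leq q < \rk(x)$ uses $K_q \cap K_{q+1} \subseteq K_p$. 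In every case $x$ is adjacent either to all of $M$ or to none of it, so $M$ is a module.

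It remains to show that $M$ is nontrivial and not a clique. Because consecutive cliques along a clique path are distinct maximal cliques, both $K_p \setminus K_{p+1}$ and $K_q \setminus K_{q-1}$ are nonempty; pick $u$ in the former and $v$ in the latter. Then $\rk(u) = p$, and if $u$ were in $K_{p-1}$ the hypothesis would force $u \in K_q$, contradicting $\rk(u) = p < q$; hence $\lk(u) = \rk(u) = p$ and $u \in M$. Symmetrically $\lk(v) = \rk(v) = q$ and $v \in M$. These witnesses are distinct and share no clique, so they are nonadjacent and $M$ is not a clique. Finally, $M \neq V(H)$: the strict containment $[p,q] \subsetneq [1,\ell]$ yields some clique outside the window, say $K_{q+1}$, and any vertex of $K_{q+1} \setminus K_q$ has $\lk > q$ and therefore lies outside $M$.

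The main obstacle is the module verification: one must carefully enumerate the positions of $x$'s interval relative to $[p,q]$ and invoke each of the two hypotheses at exactly the right moment, so that any $x$ overlapping the window only partially is pushed to span the full window. Once this case analysis is clean, nontriviality and failure of the clique property follow immediately from the witnesses $u \in K_p \setminus K_{p+1}$ and $v \in K_q \setminus K_{q-1}$.
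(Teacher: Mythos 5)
Your proof is correct and takes essentially the same approach as the paper: both exhibit the set of vertices confined to the window $[p,q]$ as a nontrivial non-clique module (the paper writes it as $\bigcup_{i=p}^q K_i\setminus(K_p\cap K_q)$, which under the hypothesis agrees with your set except for vertices whose clique-interval is exactly $[p,q]$), and both use witnesses from $K_p\setminus K_{p+1}$ and $K_q\setminus K_{q-1}$ to show it is not a clique. Your write-up is in fact a bit more careful than the paper's, since you explicitly verify the module property from outside and check $M\neq V(H)$.
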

\begin{proof}
  If $K_{p - 1}\cap K_{p}\setminus K_{q}$ and~$K_{q}\cap K_{q + 1}\setminus K_{p}$ are both empty, then
\[
  M = \bigcup_{i=p}^q K_{i}\setminus (K_{p}\cap K_{q})
\]
is a module of~$G$.  For every vertex~$x\in M$, it holds $N(x)\setminus M = K_{p}\cap K_{q}$.  This module is not a clique because it contains a vertex in~$K_{p}\setminus K_{p+1}\subseteq K_{p}\setminus K_{q}$ and a vertex in~$K_{q}\setminus K_{q-1}\subseteq K_{q}\setminus K_{p}$, which cannot be adjacent.  
\end{proof}

Finally, we need a result of Gimbel~\cite{gimbel-88-end-vertices} on interval graphs.

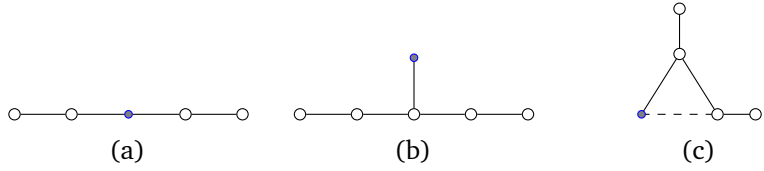
\begin{figure}[h]
  \centering \small
  \begin{subfigure}[b]{0.22\linewidth}
    \centering
    \begin{tikzpicture}[scale=.75]
      \node[filled vertex] (c) at (0, 0) {};
      \foreach \i in {1, 3} {
        \node[empty vertex] (u\i) at ({90*(\i-1)}:2) {};
        \node[empty vertex] (v\i) at ({90*(\i-1)}:1) {};
        \draw (u\i) -- (v\i) -- (c);
      }
    \end{tikzpicture}
    \caption{}
  \end{subfigure}
  \,
  \begin{subfigure}[b]{0.22\linewidth}
    \centering
    \begin{tikzpicture}[scale=.75]
      \node[empty vertex] (c) at (0, 0) {};
      \foreach \i in {1, 3} {
        \node[empty vertex] (u\i) at ({90*(\i-1)}:2) {};
        \node[empty vertex] (v\i) at ({90*(\i-1)}:1) {};
        \draw (u\i) -- (v\i) -- (c);
      }
      \draw (90:1) node[filled vertex] {} -- (c);
    \end{tikzpicture}
    \caption{}
  \end{subfigure}
  \,
  \begin{subfigure}[b]{0.22\linewidth}
    \centering
    \begin{tikzpicture}[yscale=.8]
      \draw[dashed] (0, 0) -- (1., 0);
      \draw (0.5, 1.75) node[empty vertex] {} -- (0.5, 1) node[empty vertex] (c) {} -- (0, 0) node[filled vertex] {};
      \foreach[count =\i] \x/\t in {1/empty } {
        \node[\t vertex] (v\i) at ({1. * \x}, 0) {};
        \draw (v\i) -- (c);
      }
      \draw (v1) -- (1.5, 0) node[empty vertex] {};
    \end{tikzpicture}
    \caption{}
  \end{subfigure}
  \caption{The solid node cannot be in an end clique in any clique path.  A dashed line indicates a path of an arbitrary length.}
  \label{fig:non-end-interval}
\end{figure}

\begin{theorem}[\cite{gimbel-88-end-vertices}]
  \label{thm:end-interval}
  Let $H$ be an interval graph, and~$v$ a vertex.
  There is a clique path of~$H$ in which $v$ is in the first or last clique
  if and only if $v$ is not the highlighted vertex of an induced subgraph in Figure~\ref{fig:non-end-interval}.
\end{theorem}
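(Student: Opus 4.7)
The plan is to prove the two directions separately.

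For necessity, I would inspect each of the three forbidden configurations in Figure~\ref{fig:non-end-interval} and directly verify that the highlighted vertex $v$ cannot lie in an end clique of any clique path. In each case, assume toward contradiction that $v\in K_{1}$. Configuration (a) presents $v$ as the center of an induced $P_{5}$; its two non-neighbors at distance two must each lie in cliques separated from $K_{1}$ by at least one clique not containing $v$, so one of them is forced to lie to the ``left'' of $K_{1}$, a contradiction. Configuration (b) adds a pendant to the center of such a $P_{5}$: placing the pendant in $K_{1}$ forces the center into $K_{1}$ as well, and the previous argument applies. For configuration (c), the pendant path attached to the structure near $c$ forces $v$ to lie strictly between two blocks of cliques, precluding $v\in K_{1}$ (and, by symmetry, $v\in K_{\ell}$).

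For sufficiency, I would argue by contrapositive: supposing no clique path of $H$ places $v$ in an end clique, I would extract one of the three configurations with $v$ highlighted. The cleanest route is to work with the module decomposition from Proposition~\ref{lem:modules-interval} and Theorem~\ref{thm:unique-clique-path}: quotient $H$ by the coarsest module containing $v$ that is a clique, leaving a quasi-prime interval graph with a unique clique path (up to reversal). Then $v$ is interior in that unique ordering, and one can read off the obstructions from the cliques flanking $\lk(v)$ and $\rk(v)$. Specifically, the clique immediately before $K_{\lk(v)}$ must contain a vertex not in $K_{\lk(v)}$, yielding a neighbor-of-$v$ that has a further non-neighbor of $v$; symmetrically on the right. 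Depending on whether the two resulting side-paths are short or one of them is forced to extend (because of modules expanded back into the original graph), one recovers case (a), (b), or (c).

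The main obstacle is handling the module expansion in the sufficiency direction in a way that correctly distinguishes the three cases. Case (c) is where the path of arbitrary length arises, and it corresponds exactly to the situation where one of the side-modules, when expanded, is itself a nontrivial interval structure; the inductive extraction of a pendant tail inside such a module must be done carefully to avoid producing a subgraph that contains one of the smaller obstructions (a) or (b) instead. A safe way to organize this is to pick, on each side of $v$, a shortest witnessing subgraph and then do a case analysis on whether the left and right witnesses have their ``deep'' endpoints adjacent or not; this bifurcation matches (a)/(b) versus (c) precisely.
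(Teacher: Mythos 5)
The paper offers no proof of this statement at all: it is quoted from Gimbel's characterization of end vertices of interval graphs (the bracketed citation) and used as a black box in Lemma~\ref{lem:quasi-prime} and Lemma~\ref{lem:split-non-cag-but-interval}. So there is no in-paper argument to compare yours against; what follows is an assessment of your sketch on its own terms.

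Your necessity direction is essentially right. For configuration (a) the clean statement is: the maximal cliques containing the center $v$ form a contiguous block $[\lk(v),\rk(v)]$; each of the two end vertices of the $P_{5}$ lies in a maximal clique (with its middle neighbor) outside that block; if $v$ were in the first clique, both of those cliques would have index greater than $\rk(v)$, forcing both middle vertices into $K_{\rk(v)}$ and hence making them adjacent, a contradiction. Configuration (b) reduces to (a) because the pendant's unique maximal clique sits inside the center's block. Configuration (c) you only assert; it still needs to be checked (including the degenerate case where the dashed path has length one), but that is a finite verification.

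The sufficiency direction is where the genuine gap lies, and it is the substance of the theorem. First, contracting ``the coarsest module containing $v$ that is a clique'' does not produce a quasi-prime graph---quasi-primality requires collapsing \emph{every} non-clique module, not one module around $v$---so Theorem~\ref{thm:unique-clique-path} does not apply to your quotient as described. Second, the step ``one can read off the obstructions from the cliques flanking $\lk(v)$ and $\rk(v)$'' is exactly what has to be proved. From the flanking cliques you do get $a_L\in K_{\lk(v)-1}\setminus K_{\lk(v)}$ (a non-neighbor of $v$) and $b_L\in K_{\lk(v)-1}\cap K_{\lk(v)}$ (a neighbor of $v$), and symmetrically $a_R,b_R$ on the right; but nothing guarantees that $b_L$ and $b_R$ are distinct and non-adjacent, or that $b_R$ avoids $a_L$, which is what configuration (a) requires. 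The entire case analysis of the theorem lives in the situations $b_L=b_R$, $b_Lb_R\in E(H)$, or $b_Ra_L\in E(H)$, and that is precisely where configurations (b) and (c)---with its path of unbounded length---come from; your sketch names this difficulty but does not resolve it. Third, the recursion into modules needs a precise invariant: being placeable in an end clique of $H$ is not obviously equivalent to the module's representative being so placeable in the quotient together with $v$ being so placeable inside the module, and you would have to prove that equivalence before ``extracting a pendant tail.'' As written this is a plausible plan rather than a proof; the honest options are to cite Gimbel's published proof, as the paper does, or to carry out the flanking-clique case analysis in full.
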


We prove the main result in two steps.  First, we deal with the case that $H$ is quasi-prime.

\begin{lemma}\label{lem:quasi-prime}
  Let $H'$ be an induced subgraph of~$H$ that is quasi-prime.
  If $H$ is an interval graph and does not contain an annotated copy of any graph in Figure~\ref{fig:non-cag-to-interval}, then any clique path of~$H'$ satisfies condition~\eqref{eq:2}.
\end{lemma}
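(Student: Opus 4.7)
The plan is to prove the contrapositive: if some clique path $\langle K_1,\ldots,K_\ell\rangle$ of $H'$ violates condition~\eqref{eq:2}, then $H$ contains an annotated copy of a graph in Figure~\ref{fig:non-cag-to-interval}. By Theorem~\ref{thm:unique-clique-path}, this clique path is unique up to reversal, so only one canonical structure needs to be analysed, and condition~\eqref{eq:2} is symmetric under reversal. A violation yields vertices $v\in K_s\cap V(H')$ and $u\in K_o\cap V(H')$ with $uv\in E(H')$, $\lk(v)<\lk(u)$, and $\rk(u)<\rk(v)$; the interval of $u$ is strictly nested inside that of $v$.

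The first step is to apply Proposition~\ref{lem:modules-interval} with $p=\lk(u)$ and $q=\rk(u)$, which forces at least one of $K_{\lk(u)-1}\cap K_{\lk(u)}\setminus K_{\rk(u)}$ and $K_{\rk(u)}\cap K_{\rk(u)+1}\setminus K_{\lk(u)}$ to be non-empty, since otherwise the vertices strictly between $\lk(u)$ and $\rk(u)$ in the clique path would form a non-trivial non-clique module of $H'$. After possibly reversing the path, pick $w$ from the left set. Then $w$ is adjacent in $H'$ to both $v$ and $u$ and satisfies $\lk(w)<\lk(u)\le\rk(w)<\rk(u)$, so that $w$'s interval starts strictly to the left of $u$'s and ends strictly inside $u$'s.

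The heart of the argument is a case analysis on the part of the split partition containing $w$. If $w\in K_s$, then $v$ and $w$ are two $K_s$-vertices both adjacent to $u$ in $H$, and a boundary non-neighbour of $u$ reached from each end of $v$'s interval provides the remaining data for an annotated copy of Figure~\ref{fig:configuration-7}. If $w\in K_o$, then $v$ has two $K_o$-neighbours $u,w$ sitting inside its interval, and similar boundary vertices complete an annotated copy of Figure~\ref{fig:configuration-8}. If $w\in S\setminus\{s\}$, then $w$ is simplicial in $H'$ by Proposition~\ref{lem:trivial}, and one has to trace the clique path in both directions outside $[\lk(u),\rk(u)]$ through the interval of $v$, classifying the types of the vertices encountered at successive transitions, in order to identify one of Figures~\ref{fig:configuration-1}, \ref{fig:configuration-2}, \ref{fig:configuration-3}, \ref{fig:configuration-4}, or \ref{fig:configuration-6}.

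The main obstacle is this last case. The part of $v$'s interval disjoint from $u$'s can be populated by vertices from $K_s$, $K_o$, and $S\setminus\{s\}$ in several different patterns, each corresponding to a distinct configuration~(c)--(g); to extract the particular vertices that each configuration requires, Proposition~\ref{lem:modules-interval} must be re-applied to carefully chosen sub-intervals of $[1,\ell]$ so that the clique path cannot terminate its extensions prematurely (which would again produce a non-trivial module forbidden by quasi-primality). The book-keeping needed to show that the specific forbidden configuration realised is always one of those listed, rather than a graph outside Figure~\ref{fig:non-cag-to-interval}, is the principal technical difficulty.
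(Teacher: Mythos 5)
There is a genuine gap at the very first step. You apply Proposition~\ref{lem:modules-interval} with $p=\lk(u)$ and $q=\rk(u)$, but that proposition requires \emph{distinct} indices; when $u$ is simplicial in $H'$ (so $\lk(u)=\rk(u)$) both of your candidate sets $K_{\lk(u)-1}\cap K_{\lk(u)}\setminus K_{\rk(u)}$ and $K_{\rk(u)}\cap K_{\rk(u)+1}\setminus K_{\lk(u)}$ are empty, and the vertex $w$ on which your entire case analysis rests does not exist. This situation really occurs under the hypotheses of the lemma: take $H'$ with clique path $\langle\{a_1,a_2\},\,\{a_2,v,b\},\,\{v,b,u\},\,\{v,c\},\,\{c,d\}\rangle$. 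This graph is prime (hence quasi-prime), $u$ is simplicial, and $\lk(v)=2<3=\lk(u)=\rk(u)<4=\rk(v)$, so with $v\in K_{s}$ and $u\in K_{o}$ condition~\eqref{eq:2} fails, yet your argument produces nothing. (The conclusion is still true here --- $\{a_1,a_2,v,c,d,u\}$ is an annotated copy of Figure~\ref{fig:configuration-1} --- but your proof cannot reach it.) The paper avoids this by applying Proposition~\ref{lem:modules-interval} to the \emph{outer} interval $[\lk(v),\rk(v)]$, whose endpoints are automatically distinct since $u$ is strictly nested, thereby obtaining boundary sets $L$ and $R$ at the two ends of $v$'s interval; it also invokes Gimbel's Theorem~\ref{thm:end-interval} to exploit the fact that $u$ lies in no end clique of any clique path.

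Even where your $w$ exists, the case analysis is not workable as sketched. Any $w\in K_{\lk(u)-1}\cap K_{\lk(u)}$ lies in two distinct maximal cliques of~$H'$ and is therefore not simplicial, whereas every vertex of $S\setminus\{s\}$ is simplicial in $H$ by Proposition~\ref{lem:trivial} and hence in $H'$; so your case $w\in S\setminus\{s\}$ --- the one you single out as the principal difficulty --- is vacuous, which indicates the real work has been mislocated. In the case $w\in K_{s}$ you assert that boundary non-neighbours of $u$ complete an annotated copy of Figure~\ref{fig:configuration-7}, but that configuration additionally requires a common neighbour $x_{3}$ of the two $K_{s}$-vertices avoiding $u$ and a private pendant neighbour for each of them, none of which is guaranteed (for instance when $w$'s interval is nested in $v$'s there is no vertex adjacent to $w$ but not to $v$). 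The paper's case split is instead governed by how $N(u)$ meets $L\cup R$ (disjoint; meets but does not contain; contains, with a further split on $K_{\rk(u)}$), and it is this adjacency pattern at the boundary of $v$'s interval --- not the type of a vertex adjacent to $u$ --- that determines which configuration of Figure~\ref{fig:non-cag-to-interval} appears.
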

\begin{proof}
  Let $\langle K_{1}, \ldots, K_{\ell}\rangle$ be a clique path of~$H'$.
  We show that if 
  there are $u\in K_{o}$ and~$v\in K_{s}$ such that
  \[
    \lk(v) < \lk(u) \le \rk(u) < \rk(v),
  \]
  then $H'$ contains an annotated copy of a graph in Figure~\ref{fig:non-cag-to-interval}.
  Since a clique path of~$H'$ is either  $\langle K_{1}, \ldots, K_{\ell}\rangle$ or its reversal (Theorem~\ref{thm:unique-clique-path}), $u$ cannot be in an end clique in either of them.
  By Theorem~\ref{thm:end-interval}, $u$ is the highlighted vertex of an induced subgraph in Figure~\ref{fig:non-end-interval}.
  If $\lk(v) = 1$ \textit{and}~$\rk(v) = \ell$, then $H'$ contains an annotated copy of Figure~\ref{fig:configuration-6} (when the induced subgraph is Figure~\ref{fig:non-end-interval}c), or Figure~\ref{fig:configuration-1} (when the induced subgraph is Figure~\ref{fig:non-end-interval}b and the neighbor of~$u$ is in~$K_{s}$) or Figure~\ref{fig:configuration-3} (otherwise).
  Henceforth, we assume that $v$ is not universal in~$H'$.
  
For notational convenience, we introduce empty sets~$K_{0}$ and~$K_{\ell+1}$ as sentinels.
Since $H'$ is quasi-prime and~$v$ is not universal in~$H'$, at least one of 
\begin{align*}
  L =& K_{\lk(v) - 1}\cap K_{\lk(v)}\setminus K_{\rk(v)},
  \\
  R =& K_{\rk(v)}\cap K_{\rk(v) + 1}\setminus K_{\lk(v)}
\end{align*}
is nonempty by Proposition~\ref{lem:modules-interval}.
Note that $\lk(v) > 1$ if $L\ne\emptyset$, and~$\rk(v) < \ell$ if $R\ne\emptyset$.
  Since $K_{i}$, $i = 1, \ldots, \ell$, is a maximal clique of $H'$, neither $K_{i}\setminus K_{i+1}$ nor~$K_{i}\setminus K_{i-1}$ can be empty.
  
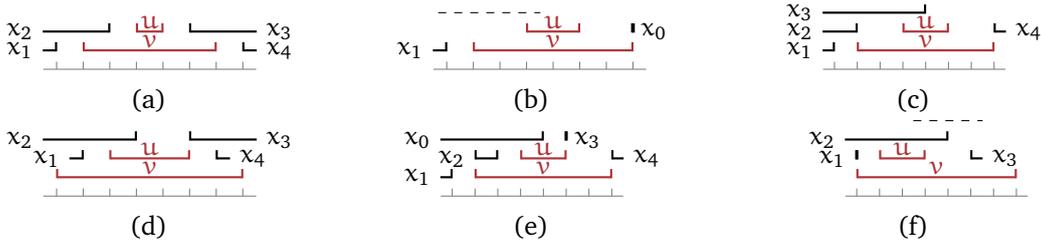
\begin{figure}[h]
  \centering \small
  \begin{subfigure}[b]{.3\linewidth}
    \centering
    \begin{tikzpicture}[xscale=.35]
      \foreach \l/\r/\y/\c in {4/5/2/u, 2/7/1/v}{
        \draw[{|[right]}-{|[left]}, thick, Sepia] (\l-.02, \y/4) to node[midway, yshift=3.5pt] {$\c$} (\r+.02, \y/4);
      }
      \foreach[count=\i] \right/\span/\y in {1/.5/1, 3/2.5/2}{
        \draw[-{|[left]}, thick] (\right-\span-.02, \y/4) node[left] {$x_\i$} to (\right+.02, \y/4);
      }
      \foreach[count=\i from 3] \left/\span/\y in {6/2.5/2, 8/.5/1}{
        \draw[{|[right]}-, thick] (\left-.02, \y/4) to (\left+\span+.02, \y/4) node[right] {$x_\i$};
      }
      \draw[gray] (.5, 0) -- (8.5, 0);
      \foreach \x in {1, ..., 8}
      \draw[dashed, gray] (\x, 0) -- ++(0, .1);
    \end{tikzpicture}
    \caption{}
    \label{fig:clique-path-1}
  \end{subfigure}  
  \,
  \begin{subfigure}[b]{.3\linewidth}
    \centering
    \begin{tikzpicture}[xscale=.35]
      \foreach \l/\r/\y/\c in {4/6/2/u, 2/8/1/v}{
        \draw[{|[right]}-{|[left]}, thick, Sepia] (\l-.02, \y/4) to node[midway, yshift=3.5pt] {$\c$} (\r+.02, \y/4);
      }
      \foreach[count=\i from 0] \l/\r/\y in {7.95/8.05/2}{
        \draw[{|[right]}-{|[left]}, thick] (\l-.02, \y/4) node[right] {$x_\i$} to (\r+.02, \y/4);
      }
      \foreach[count=\i] \right/\span/\y in {1/.5/1}{
        \draw[-{|[left]}, thick] (\right-\span-.02, \y/4) node[left] {$x_\i$} to (\right+.02, \y/4);
      }
      \foreach \l/\r/\y in {.7/4.5/3}{
        \draw[dashed] (\l-.02, \y/4) to (\r+.02, \y/4);
      }
      \draw[gray] (.5, 0) -- (8.5, 0);
      \foreach \x in {1, ..., 8}
      \draw[dashed, gray] (\x, 0) -- ++(0, .1);
    \end{tikzpicture}
    \caption{}
    \label{fig:clique-path-2}
  \end{subfigure}  
  \,
  \begin{subfigure}[b]{.3\linewidth}
    \centering
    \begin{tikzpicture}[xscale=.3]
      \foreach \l/\r/\y/\c in {4/6/2/u, 2/8/1/v}{
        \draw[{|[right]}-{|[left]}, thick, Sepia] (\l-.02, \y/4) to node[midway, yshift=3.5pt] {$\c$} (\r+.02, \y/4);
      }
      \foreach[count=\i] \right/\span/\y in {1/.5/1, 2/1.5/2, 5/4.5/3} 
      \draw[-{|[left]}, thick] (\right-\span-.02, \y/4) node[left] {$x_\i$} to (\right+.02, \y/4);
      \foreach[count=\i from 4] \left/\span/\y in {8/0.5/2}
        \draw[{|[right]}-, thick] (\left-.02, \y/4) to (\left+\span+.02, \y/4) node[right] {$x_\i$};
      \draw[gray] (.5, 0) -- (8.5, 0);
      \foreach \x in {1, ..., 8}
      \draw[dashed, gray] (\x, 0) -- ++(0, .1);
    \end{tikzpicture}
    \caption{}
    \label{fig:clique-path-3}
  \end{subfigure}  

  \begin{subfigure}[b]{.3\linewidth}
    \centering
    \begin{tikzpicture}[xscale=.35]
      \foreach \l/\r/\y/\c in {3/6/2/u, 1/8/1/v}{
        \draw[{|[right]}-{|[left]}, thick, Sepia] (\l-.02, \y/4) to node[midway, yshift=3.5pt] {$\c$} (\r+.02, \y/4);
      }

      \foreach[count=\i] \right/\span/\y in {2/.5/2, 4/3.5/3}{
        \draw[-{|[left]}, thick] (\right-\span-.02, \y/4) node[left] {$x_\i$} to (\right+.02, \y/4);
      }
      \foreach[count=\i from 3] \left/\span/\y in {6/2.5/3, 7/0.5/2}{
        \draw[{|[right]}-, thick] (\left-.02, \y/4) to (\left+\span+.02, \y/4) node[right] {$x_\i$};
      }
      \draw[gray] (.5, 0) -- (8.5, 0);
      \foreach \x in {1, ..., 8}
      \draw[dashed, gray] (\x, 0) -- ++(0, .1);

    \end{tikzpicture}
    \caption{}
    \label{fig:clique-path-4}
  \end{subfigure}  
  \,
  \begin{subfigure}[b]{.3\linewidth}
    \centering
    \begin{tikzpicture}[xscale=.3]
      \foreach \l/\r/\y/\c in {4/6/2/u, 2/8/1/v}{
        \draw[{|[right]}-{|[left]}, thick, Sepia] (\l-.02, \y/4) to node[midway, yshift=3.5pt] {$\c$} (\r+.02, \y/4);
      }
      \foreach[count=\i from 0] \right/\span/\y in {5/4.5/3, 1/.5/1} 
      \draw[-{|[left]}, thick] (\right-\span-.02, \y/4) node[left] {$x_\i$} to (\right+.02, \y/4);
      \foreach[count=\i from 4] \left/\span/\y in {8/.5/2}
        \draw[{|[right]}-, thick] (\left-.02, \y/4) to (\left+\span+.02, \y/4) node[right] {$x_\i$};

      \foreach[count=\i from 2] \l/\r/\y/\p in {2/3/2/left, 5.95/6.05/3/right}{
        \draw[{|[right]}-{|[left]}, thick] (\l-.02, \y/4) node[\p] {$x_\i$} to (\r+.02, \y/4);
      }

      \draw[gray] (.5, 0) -- (8.5, 0);
      \foreach \x in {1, ..., 8}
      \draw[dashed, gray] (\x, 0) -- ++(0, .1);
    \end{tikzpicture}
    \caption{}
    \label{fig:clique-path-5}
  \end{subfigure}  
  \,
  \begin{subfigure}[b]{.3\linewidth}
    \centering
    \begin{tikzpicture}[xscale=.3]
      \foreach \l/\r/\y/\c in {2/4/2/u, 1/8/1/v}{
        \draw[{|[right]}-{|[left]}, thick, Sepia] (\l-.02, \y/4) to node[midway, yshift=3.5pt] {$\c$} (\r+.02, \y/4);
      }

      \foreach[count=\i] \l/\r/\y/\p in {0.95/1.05/2/left}{
        \draw[{|[right]}-{|[left]}, thick] (\l-.02, \y/4) node[\p] {$x_\i$} to (\r+.02, \y/4);
      }

      \foreach[count=\i from 2] \right/\span/\y in {5/4.5/3} 
      \draw[-{|[left]}, thick] (\right-\span-.02, \y/4) node[left] {$x_\i$} to (\right+.02, \y/4);
      \foreach[count=\i from 3] \left/\span/\y in {6/0.5/2}
        \draw[{|[right]}-, thick] (\left-.02, \y/4) to (\left+\span+.02, \y/4) node[right] {$x_\i$};
  
      \foreach \l/\r/\y in {3.5/6.5/4}{
        \draw[dashed] (\l-.02, \y/4) to (\r+.02, \y/4);
      }

      \draw[gray] (.5, 0) -- (8.5, 0);
      \foreach \x in {1, ..., 8}
      \draw[dashed, gray] (\x, 0) -- ++(0, .1);
    \end{tikzpicture}
    \caption{}
    \label{fig:clique-path-6}
  \end{subfigure}  
  \caption{Clique paths used in the proof of Lemma~\ref{lem:quasi-prime}.}
  \label{fig:clique-path}
\end{figure}

  Case 1, $L\cup R$ is disjoint from~$N(u)$.
  If neither $L$ nor~$R$ is empty, then $H$ contains an annotated copy of Figure~\ref{fig:configuration-1}, where
    $x_{1}\in K_{\lk(v) - 1}\setminus K_{\lk(v)}$, $x_{2}\in L$,
  and
    $x_{3}\in R$, $x_{4}\in K_{\rk(v) + 1}\setminus K_{\rk(v)}$ (see Figure~\ref{fig:clique-path-1}).
    Hence, we assume without loss of generality that $L\ne\emptyset$ and~$R=\emptyset$.
    We argue that $H$ contains an annotated copy of Figure~\ref{fig:configuration-4}.
   We take a vertex~$x_{0}\in K_{\rk(v)}\setminus K_{\rk(v) - 1}$ and a vertex~$x_{1}\in K_{\lk(v) - 1}\setminus K_{\lk(v)}$; see Figure~\ref{fig:clique-path-2}.
   By Proposition~\ref{lem:modules-interval}, $K_{i-1}\cap K_{i}\setminus K_{\rk(v)}\ne \emptyset$ for all~$i = \lk(v), \ldots, \lk(u)$.
   Thus, $x_{1}$ and~$u$ are in the same component of~$H' - K_{\rk(v)}$.
   We find a shortest \stpath{x_{1}}{u} in~$H' - K_{\rk(v)}$.
   Note that the length of this path is at least two (because $L\cap N(u) = \emptyset$), and its internal vertices are adjacent to~$v$ but not $x_{0}$.

  Case 2, $u$ has at least one neighbor in~$L\cup R$.
  We may assume without loss of generality that $L\cap N(u)\ne\emptyset$; it is symmetric if $R\cap N(u)\ne\emptyset$.
  If $L \not\subseteq N(u)$, then $H$ contains an annotated copy of Figure~\ref{fig:configuration-2}, where $x_{1}\in K_{\lk(v) - 1}\setminus K_{\lk(v)}$, $x_{2}\in L \setminus N(u)$, $x_{3}\in L\cap N(u)$, and~$x_{4}\in K_{\rk(v)}\setminus K_{\rk(v) - 1}$ (see Figure~\ref{fig:clique-path-3}).
  Henceforth, $L$ is a nonempty subset of~$N(u)$.

  Case 2.1, $L\not\subseteq K_{\rk(u)}$.  
  If $K_{\rk(u)}\cap K_{\rk(u)+1}\not\subseteq K_{\rk(u) - 1}$,
  then $H$ contains an annotated copy of Figure~\ref{fig:configuration-3}, where $x_{1}\in K_{\lk(u)-1}\setminus K_{\lk(u)}$, $x_{2}\in L\setminus K_{\rk(u)}$, $x_{3}\in K_{\rk(u)}\cap K_{\rk(u)+1}\setminus K_{\rk(u) - 1}$, and~$x_{4}\in K_{\rk(u)+1}\setminus K_{\rk(u)}$ (see Figure~\ref{fig:clique-path-4}).
  Otherwise, we find a vertex~$x_{1}\in K_{\lk(v) - 1}\setminus K_{\lk(v)}$, a vertex~$x_{2}\in K_{\lk(u) - 1}\setminus K_{\lk(u)}$, a vertex~$x_{3}\in K_{\rk(u)}\setminus K_{\rk(u) - 1}$, and a vertex~$x_{4}\in K_{\rk(v)}\setminus K_{\rk(v) - 1}$; see Figure~\ref{fig:clique-path-5}.
    Since $L\subseteq N(u)$, the vertex~$x_{2}$ cannot be in~$K_{\lk(v) - 1}$; since
    $K_{\rk(u)}\cap K_{\rk(u)+1}\setminus K_{\rk(u) - 1}= \emptyset$,
    the vertex~$x_{3}$ cannot be in~$K_{\rk(u) + 1}$.
    Note that $x_{0}\in K$ by Proposition~\ref{lem:trivial}.
    Hence, $H$ contains an annotated copy of Figure~\ref{fig:configuration-7}
    if $x_{0}\in K_{s}$, or Figure~\ref{fig:configuration-8} otherwise, with vertex set~$\{u, v, x_{0}, x_{1}, x_{2}, x_{4}\}$ and~$\{u, v, x_{0}, x_{2}, x_{3}, x_{4}\}$, respectively.

    Case 2.2, $L\subseteq K_{\rk(u)}$.
    We argue that $H$ contains an annotated copy of Figure~\ref{fig:configuration-6}.
    We take a vertex~$x_{1}\in K_{\lk(v)}\setminus K_{\lk(v) + 1}$, a vertex~$x_{2}$ from~$L$ such that $\rk(x_{2})$ is minimized, and a vertex~$x_{3}\in K_{\rk(x_{2}) + 1}\setminus K_{\rk(x_{2})}$.  See Figure~\ref{fig:clique-path-6}.
    By Proposition~\ref{lem:modules-interval}, $K_{i}\cap K_{i+1}\setminus K_{\lk(v)}\ne \emptyset$ for all~$i = \rk(u), \rk(u)+1, \ldots, \rk(x_{2})$.
   Thus, $u$ and~$x_{3}$ are in the same component of~$H' - K_{\lk(v)}$.
   We find a shortest \stpath{u}{x_{3}} in~$H' - K_{\lk(v)}$.
   Note that all internal vertices of this path, which is nontrivial because $x_{3}u\not\in E(H')$, are adjacent to~$x_{2}$ but not~$x_{1}$.
\end{proof}

We are now ready for the general case.

\begin{lemma}\label{lem:split-non-cag-but-interval}
  A split graph~$G$ is a circular-arc graph if and only if $H$ is an interval graph and does not contain an annotated copy of any graph in Figure~\ref{fig:non-cag-to-interval}.
\end{lemma}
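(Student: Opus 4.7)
The necessity is immediate from Theorem~\ref{thm:correlation} combined with Proposition~\ref{lem:necessity-forbidden-configurations}: if $G$ is a circular-arc graph, then by Theorem~\ref{thm:correlation} the graph $H = G^{N[s]}$ satisfies~\eqref{eq:1}; in particular $H$ is an interval graph, and Proposition~\ref{lem:necessity-forbidden-configurations} (contrapositively) rules out any annotated copy of a graph in Figure~\ref{fig:non-cag-to-interval}.

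For sufficiency, by Theorem~\ref{thm:correlation} together with Proposition~\ref{lem:clique-path}, it suffices to exhibit a clique path of $H$ satisfying condition~\eqref{eq:2}. The plan is to proceed by induction on $|V(H)|$, with the quasi-prime case serving as the base: when $H$ is quasi-prime, Lemma~\ref{lem:quasi-prime} says that any clique path of $H$ already satisfies~\eqref{eq:2}.

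In the inductive step, $H$ possesses a nontrivial module $M$ that is not a clique. I would contract $M$ to a single representative vertex $m$ in the quotient $H' \mathrel{:=} H/M$, assigning the annotation of $m$ according to the types of the vertices in $M$ (placing $m$ in $K_s$ when $M\cap K_s\ne\emptyset$, in $K_o$ when $M\subseteq K_o$, and in $S$ when $M\subseteq S\setminus\{s\}$). Three things must then be checked: (i) $H'$ is again an interval graph; (ii) $H'$ contains no annotated copy of any graph in Figure~\ref{fig:non-cag-to-interval}, because any such copy in $H'$ would lift to one in $H$ by replacing $m$ with an appropriately chosen element of $M$; and (iii) a clique path of $H'$ satisfying~\eqref{eq:2}, obtained by induction, can be expanded to a clique path of $H$ satisfying~\eqref{eq:2} by locally inserting a clique path of the induced subgraph $H[M]$ into the cliques through $m$. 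Since $M$ is a module with uniform external neighborhood, this local expansion only changes the endpoints $\lk(\cdot),\rk(\cdot)$ for vertices inside $M$ and leaves the external conditions intact.

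The main obstacle is point (iii): when $M$ mixes types from $K_s$ and $K_o$, the annotation of $m$ must be chosen so that the expansion does not create a pair $v\in K_s$, $u\in K_o$ whose intervals violate~\eqref{eq:2} either inside $M$ or straddling $M$. This requires showing, as an auxiliary fact, that $H[M]$ itself satisfies the analogue of~\eqref{eq:2} — which in turn follows from the inductive hypothesis applied to an appropriate smaller instance, since $H[M]$ inherits the absence of the annotated forbidden configurations of Figure~\ref{fig:non-cag-to-interval} and, being a module with a simpler external neighborhood, cannot introduce any new violation. Coordinating this inner clique path with the outer one at the two ``end cliques'' of $m$, possibly invoking Theorem~\ref{thm:end-interval} to control which vertices of $M$ can sit at those ends, is the most delicate part of the argument.
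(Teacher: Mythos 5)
Your necessity argument and your overall strategy for sufficiency---reduce to the quasi-prime case of Lemma~\ref{lem:quasi-prime} by contracting non-clique modules, then expand the clique path---are the same as the paper's. But there is a genuine gap precisely at the step you yourself flag as ``the most delicate part,'' and the one concrete choice you commit to there is backwards. You annotate the contracted vertex~$m$ as a $K_s$-vertex whenever $M\cap K_s\ne\emptyset$. Condition~\eqref{eq:2} constrains where the \emph{$K_o$-neighbors} of a $K_s$-vertex may sit; so if $M$ contains some $u\in K_o$ and there is an external $v\in K_s\cap N(M)$ (which, $M$ being a module, is adjacent to all of~$M$), the quotient's clique path must be forced to put $m$ into an end clique of~$v$---and that only happens if $m$ is annotated as a $K_o$-vertex. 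With your annotation, the inductively obtained clique path of the quotient may legally place $m$ strictly inside $v$'s interval, and no re-expansion of $H[M]$ can repair that. This is exactly why the paper picks the representative $x_M$ from $M\cap K_o$ whenever that set is nonempty. (Your three-way case split also fails to cover modules meeting $K_o$ and $S$ but not $K_s$.)

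Even with the representative chosen correctly, the expansion is not the routine ``local insertion'' you describe, and the auxiliary fact you propose---that $H[M]$ satisfies the analogue of~\eqref{eq:2}---is not the statement needed. The constraint that matters couples vertices \emph{inside} $M$ (namely $M\cap K_o$) with $K_s$-vertices \emph{outside} $M$, so it cannot be delegated to an inductive call on $H[M]$ alone. What must be shown is positional: all of $M\cap K_o$ has to land in an end clique of the inner clique path that can be aligned with the side of $N_{H''}[x_M]$ in the outer path whose neighboring clique avoids $K_s\cap N(M)$, and every $v\in K_s\cap N(M)$ must reach that side. The paper establishes this by a two-case analysis on whether the closed neighborhoods of the vertices of $M\cap K_o$ are pairwise comparable: in one case it extracts annotated copies of Figures~\ref{fig:configuration-3}, \ref{fig:configuration-6}, \ref{fig:configuration-7}, \ref{fig:configuration-8} to force $M\cap K_o\subseteq K_1\cup K_\ell$ and $N_{H'}[v]=N_{H'}[M]$; in the other it uses the absence of Figures~\ref{fig:configuration-1}, \ref{fig:configuration-3}, \ref{fig:configuration-6} to verify the hypothesis of Theorem~\ref{thm:end-interval} for a minimum-degree vertex of $M\cap K_o$. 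None of this is supplied by your sketch, so the sufficiency direction remains unproved as written.
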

\begin{proof}
  The necessity is given in Proposition~\ref{lem:necessity-forbidden-configurations}, and the proof is focused on the sufficiency.  
  By Theorem~\ref{thm:correlation} and Proposition~\ref{lem:clique-path}, it suffices to construct a clique path of~$H$ satisfying condition~\eqref{eq:2}.
  We take an induced subgraph~$H'$ of~$H$ by applying the following operations exhaustively (the application of one of them may re-enable the other).
  Initially, $H' = H$.  Since $K_{o}$ is a clique, $K_{o}\cap V(H')$ must reside in a single component of~$H'$.
  \begin{itemize}
  \item If $H'$ is not connected, remove components disjoint from~$K_{o}$.
  \item Remove the universal vertices of~$H'$ that are from~$V(G)\setminus K_{s}$.
  \end{itemize}
  Once we have a clique path for the reduced graph, we can extend it to a clique path of~$H'$.
  For operation one, if the clique path of every component satisfies condition~\eqref{eq:2}, then we can concatenate them into a clique path of~$H'$ satisfying condition~\eqref{eq:2}.
  For operation two, once we have a clique path of the resulting graph that satisfies condition~\eqref{eq:2}, we can add these universal vertices to each clique.  Since these vertices are from~$V(G)\setminus K_{s}$, condition~\eqref{eq:2} remains satisfied.
  In the rest, $H'$ is connected since the first reduction is not applicable
  
  We take $\mathcal{M}$ to be the maximal vertex sets~$M$ such that (a)~$M\ne V(H')$; (b)~no vertex in~$H'[M]$ is universal; and (c)~$M$ is a module of~$H'$.   Note that $M$ is not a clique by condition (b).
  We argue that the modules in~$\mathcal{M}$ are pairwise disjoint and nonadjacent.
Suppose for contradiction that there are $M_{1}, M_{2} \in \mathcal{M}$ such that $M_{1}\cap M_{2} \ne \emptyset$.
By the definition of~$\mathcal{M}$ (maximality), neither $M_{1}$ nor~$M_{2}$ is a subset of the other.
By definition of modules, $M_{1}\cup M_{2}$ is a module of~$H'$ (any vertex in~$V(H')\setminus (M_{1}\cup M_{2})$ adjacent to~$(M_{1}\setminus M_{2})\cup (M_{2}\setminus M_{1})$ is adjacent to~$M_{1}\cap M_{2}$, and vice versa).
By condition (b), no vertex in~$H'[M_{1}\cup M_{2}]$ can be universal.
Thus, we must have $M_{1}\cup M_{2} = V(H')$:
otherwise neither $M_{1}$ nor~$M_{2}$ can be in~$\mathcal{M}$ because they are not maximal.
Then by the definition of modules, all the three sets~$M_{1}\setminus M_{2}$, $M_{2}\setminus M_{1}$, and~$M_{1}\cap M_{2}$ are modules of~$H'$.
Since $H'$ is connected, the three sets are complete to each other.
Since $H'$ is an interval graph, at least two of them are cliques, but then both $H'[M_{1}]$ and~$H'[M_{2}]$ have universal vertices, a contradiction.
If there is an edge between~$M_{1}$ and~$M_{2}$, then they are complete to each other because they are modules.  Since neither is a clique, there is a $C_4$, a contradiction.

Let $H''$ denote the graph obtained by replacing each module~$M$ in~$\mathcal{M}$ with a single vertex~$x_{M}$ from this module.  We choose $x_{M}$ from~$M\cap K_{o}$ if it is not empty.
We argue by contradiction that $H''$ is quasi-prime.  Suppose otherwise, then we can find a nontrivial module~$X$ of~$H''$ such that $H''[X]$ does not have universal vertices.  Then the vertex set
\[
  (X\cap V(H')) \cup \bigcup_{x_{M}\in X} M
\]
should be in~$\mathcal{M}$, contradicting the construction of~$H''$.
Since $H''$ is an induced subgraph of~$H'$, hence of~$H$, it is an interval graph and does not contain an annotated copy of any graph in Figure~\ref{fig:non-cag-to-interval}.
By Lemma~\ref{lem:quasi-prime}, $H''$ has a clique path~$\mathcal{K}'$ satisfying condition~\eqref{eq:2}.

For each module~$M\in \mathcal{M}$, since $M$ is not a clique, $N(M)$ is a clique.  Thus, the vertex~$x_{M}$ is simplicial in~$H''$.
If $M$ is disjoint from~$K_{o}$, we take an arbitrary clique path of~$H'[N[M]]$, and substitute it for the clique~$N_{H''}[x_{M}]$ in~$\mathcal{K}'$.
Since $M$ is disjoint from~$K_{o}$, this will not violate condition~\eqref{eq:2}.
  If $K_{o}$ is disjoint from all the modules in~$\mathcal{M}$, then we end with a clique path of~$H'$ and it satisfies condition~\eqref{eq:2}.
  
  Now suppose that there exists $M\in \mathcal{M}$ such that $M\cap K_{o}\ne \emptyset$.
  Since $K_{o}$ is a clique of~$H$ while modules in~$\mathcal{M}$ are pairwise disjoint and nonadjacent, $M$ is the only one intersecting $K_{o}$.
  If $N_{H'}(M)$ is disjoint from~$K_{s}$, then the clique path satisfies condition~\eqref{eq:2} as long as there exists a clique path of~$H'[M]$ that satisfies condition~\eqref{eq:2}.
  In this case, we may recursively consider $H'[M]$.

  In the sequel, neither $M\cap K_{o}$ nor~$N_{H'}(M)\cap K_{s}$ is empty.  
  Since $H'$ is an interval graph and~$M$ is not a clique, $N_{H'}(M)$ is a clique, and~$N_{H'}[x]\subseteq N_{H'}[y]$ for each pair of~$x\in M$ and~$y\in N_{H'}(M)$.
  Let $\langle K'_{1}, \ldots, K'_{s} \rangle$ be the clique path of~$H''$ and~$K'_{i} = N_{H''}[x_{M}]$.
  We note that if $1 < i < s$, then at least one of~$K'_{i-1}$ and~$K'_{i+1}$ is disjoint from~$K_{s}\cap N_{H'}(M)$.
  Recall that we have selected $x_{M}$ from~$K_{o}$.
  Since the clique path satisfies condition~\eqref{eq:2}, no vertex in~$K_{s}\cap N_{H'}(M)$ is in both $K'_{i-1}$ and~$K'_{i+1}$.
  If there are $v_{1}\in K_{s}\cap K'_{i-1}$ and~$v_{2}\in K_{s}\cap K'_{i+1}$, then $H$ contains an induced copy of Figure~\ref{fig:configuration-7}, where $x_{1}\in K'_{i-1}\setminus K'_{i}$, $x_{2}\in K'_{i+1}\setminus K'_{i}$, and~$x_{3}\in M\setminus N[x_{M}]$ (note that $x_{M}$ is not universal in~$H'[M]$).  
   
  For any two vertices~$v_{1}$ and~$v_{2}$ in~$K_{s}\cap N_{H'}(M)$, one of~$N_{H'}[v_{1}]$ and~$N_{H'}[v_{2}]$ must be the subset of the other.  Suppose that there are $x_{1} \in N_{H'}[v_{1}]\setminus N_{H'}[v_{2}]$ and~$x_{2} \in N_{H'}[v_{2}]\setminus N_{H'}[v_{1}]$.  Since $H'$ is an interval graph, $x_{1}$ and~$x_{2}$ are not adjacent.  Neither of them is in~$N_{H'}[M]$.  But then $\{x_{1}, v_{1}, v_{2}, x_{2}, u, y\}$, where $u$ is any vertex in~$M\cap K_{o}$ and~$y$ is any vertex in~$M\setminus N_{H'}[u]$ (note that $u$ is not universal in~$H'[M]$), makes an annotated copy of Figure~\ref{fig:configuration-7}.
  
  Case 1, there are two vertices~$u_{1}$ and~$u_{2}$ in~$M\cap K_{o}$ such that $N_{H'}[u_{1}]$ and~$N_{H'}[u_{2}]$ are incomparable.
  We argue that any clique path of~$H'$ satisfies the condition.
  Let $\langle K_{1}, \ldots, K_{\ell} \rangle$ be an clique path of~$N_{H'}[M]$.
  Assume without loss of generality that $\lk(u_{1}) < \lk(u_{2})$, hence $\rk(u_{1}) < \rk(u_{2})$.
  We take a vertex~$x_{1} \in K_{\lk(u_{1})}\setminus K_{\lk(u_{1})+1}$ and a vertex~$x_{2} \in K_{\rk(u_{2})}\setminus K_{\rk(u_{2})-1}$.  Note that $x_{1} u_{1} u_{2} x_{2}$ is an induced path.
  If $u_{1}\not\in K_{1}$, then $\{x_{1}, u_{1}, u_{2}, x_{2}, v, y\}$, where $v$ is any vertex in~$K_{s}\cap N_{H'}(M)$ and~$y$ is any vertex in~$K_{1}\setminus K_{2}$, induces an annotated copy of Figure~\ref{fig:configuration-3} or \ref{fig:configuration-8}, depending on whether $y$ is adjacent to~$x_{1}$.
  Thus, $u_{1}\in K_{1}$, and~$u_{2}\in K_{\ell}$ by a symmetric argument.
  If another vertex~$u\in M\cap K_{o}$ is in neither $K_{1}$ nor~$K_{\ell}$, then  $\{x_{1}, u_{1}, u_{2}, x_{2}, u, y\}$, where $y$ is any vertex in~$K_{s}\cap N_{H'}(M)$, induces an annotated copy of Figure~\ref{fig:configuration-6}.
  In summary, $M\cap K_{o}\subseteq K_{1}\cup K_{\ell}$.
On the other hand, since $H'$ does not contain an annotated copy of Figure~\ref{fig:configuration-8}, $N_{H'}[v] = N_{H'}[M]$ for all $v\in K_{s}\cap N_{H'}(M)$.  
This concludes this case.

Case 2, for any two vertices~$u_{1}$ and~$u_{2}$ in~$M\cap K_{o}$, one of~$N_{H'}[u_{1}]$ and~$N_{H'}[u_{2}]$ is a subset of the other.
Let $u$ be a vertex of~$M\cap K_{o}$ with the minimum degree.
Then $N_{H'}[u]\subseteq N_{H'}[u']$ for all $u'\in M\cap K_{o}$ by the assumption.
Since $H'$ does not contain any annotated copy of Figure~\ref{fig:configuration-1}, \ref{fig:configuration-3}, or \ref{fig:configuration-6}, $u$ cannot be a filled vertex in Figure~\ref{fig:non-end-interval} in the subgraph~$H'[M]$.   By Theorem~\ref{thm:end-interval}, there exists a clique path of~$H'[M]$ in which $u$ is in an end clique.  By the selection of~$u$, this end clique contains $M\cap K_{o}$.

Thus, in the clique path~$\mathcal{K}'$ of~$H'$, there is a maximal clique that contains $K_{s}\cap N_{H'}(M)$ while its predecessor or successor is disjoint from~$K_{s}\cap N_{H'}(M)$.
We can combine the clique path of~$H'[M]$ and~$\mathcal{K}'$ to produce a clique path of~$H'$ satisfying the condition.
This concludes the proof.
\end{proof}

By a \emph{forbidden configuration} we mean a minimal non-interval graph (with all vertices unspecified) or a graph in Figure~\ref{fig:non-cag-to-interval}.

\subsection{Forbidden induced subgraphs}

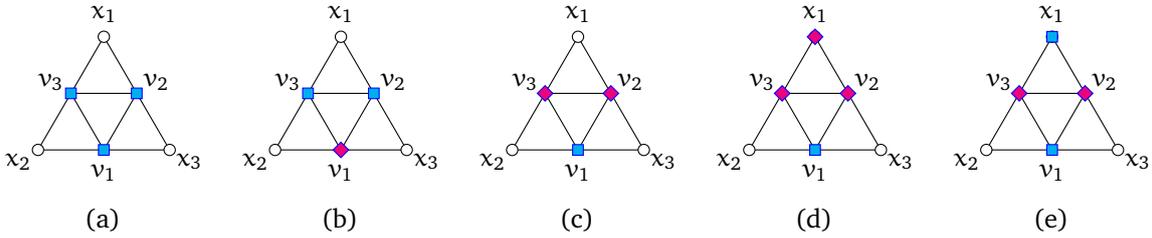
\begin{figure}[h]
  \centering \small
  \begin{subfigure}[b]{0.18\linewidth}
    \centering
    \begin{tikzpicture}[scale=.5]
      \foreach[count =\j] \i in {1, 2, 3} 
        \draw ({120*\i-90}:1) -- ({120*\i+30}:1) -- ({120*\i-30}:2) -- ({120*\i-90}:1);
      \foreach[count =\j] \i/\t in {1/, 2/, 3/} {
        \node[empty vertex] (u\i) at ({120*\i-30}:2) {};
        \node[a-vertex] (v\i) at ({120*\i-90}:1) {};

        \node at ({120*\i-30}:2.6) {$x_{\i}$};
        \node at ({120*\i+150}:1.6) {$v_{\i}$};
      }
    \end{tikzpicture}
    \caption{}
    \label{fig:split-non-cag-sun:a}
  \end{subfigure}
  \,
  \begin{subfigure}[b]{0.18\linewidth}
    \centering
    \begin{tikzpicture}[scale=.5]
      \foreach[count =\j] \i in {1, 2, 3} 
        \draw ({120*\i-90}:1) -- ({120*\i+30}:1) -- ({120*\i-30}:2) -- ({120*\i-90}:1);
      \foreach[count =\j] \i/\t in {1/a, 2/a, 3/b} {
        \node[empty vertex] (u\i) at ({120*\i-30}:2) {};
        \node[\t-vertex] (v\i) at ({120*\i-90}:1) {};

        \node at ({120*\i-30}:2.6) {$x_{\i}$};
        \node at ({120*\i+150}:1.6) {$v_{\i}$};
      }
    \end{tikzpicture}
    \caption{}
    \label{fig:split-non-cag-sun:b}
  \end{subfigure}
  \,
  \begin{subfigure}[b]{0.18\linewidth}
    \centering
    \begin{tikzpicture}[scale=.5]
      \foreach[count =\j] \i in {1, 2, 3} 
        \draw ({120*\i-90}:1) -- ({120*\i+30}:1) -- ({120*\i-30}:2) -- ({120*\i-90}:1);
      \foreach[count =\j] \i/\t in {1/b, 2/b, 3/a} {
        \node[empty vertex] (u\i) at ({120*\i-30}:2) {};
        \node[\t-vertex] (v\i) at ({120*\i-90}:1) {};

        \node at ({120*\i-30}:2.6) {$x_{\i}$};
        \node at ({120*\i+150}:1.6) {$v_{\i}$};
      }
    \end{tikzpicture}
    \caption{}
    \label{fig:split-non-cag-sun:c}
  \end{subfigure}
  \,
  \begin{subfigure}[b]{0.18\linewidth}
    \centering
    \begin{tikzpicture}[scale=.5]
      \foreach[count =\j] \i in {1, 2, 3} 
        \draw ({120*\i-90}:1) -- ({120*\i+30}:1) -- ({120*\i-30}:2) -- ({120*\i-90}:1);
      \foreach[count =\j] \i/\t in {1/b, 2/b, 3/a} {
        \node[empty vertex] (u\i) at ({120*\i-30}:2) {};
        \node[\t-vertex] (v\i) at ({120*\i-90}:1) {};

        \node at ({120*\i-30}:2.6) {$x_{\i}$};
        \node at ({120*\i+150}:1.6) {$v_{\i}$};
      }
      \node[b-vertex] at (u1) {};
    \end{tikzpicture}
    \caption{}
    \label{fig:split-non-cag-sun:d}    
  \end{subfigure}
  \,
  \begin{subfigure}[b]{0.18\linewidth}
    \centering
    \begin{tikzpicture}[scale=.5]
      \foreach[count =\j] \i in {1, 2, 3} 
        \draw ({120*\i-90}:1) -- ({120*\i+30}:1) -- ({120*\i-30}:2) -- ({120*\i-90}:1);
      \foreach[count =\j] \i/\t in {1/b, 2/b, 3/a} {
        \node[empty vertex] (u\i) at ({120*\i-30}:2) {};
        \node[\t-vertex] (v\i) at ({120*\i-90}:1) {};

        \node at ({120*\i-30}:2.6) {$x_{\i}$};
        \node at ({120*\i+150}:1.6) {$v_{\i}$};
      }
      \node[a-vertex] at (u1) {};
    \end{tikzpicture}
    \caption{}
    \label{fig:split-non-cag-sun:e}
  \end{subfigure}
  \caption{Some constitutions of vertices of a sun.
    The square, rhombus, and circle nodes are from~$K_{s}$, $K_{o}$, and~$S$, respectively.
  }
  \label{fig:split-non-cag-sun}
\end{figure}

We may derive minimal split graphs that are not circular-arc graphs in a similar way to Lemma~\ref{lem:alternative}.
However, the situation is far more complicated than Section~\ref{sec:hcag}.
The vertex set is now partitioned into three instead of two parts.
Each forbidden configuration of~$H$ corresponds to several graphs in region 3.
For example, there are more than ten partitions of the vertices of a sun, some of which are listed in Figure~\ref{fig:split-non-cag-sun}.
Note that the first three graphs in Figure~\ref{fig:split-non-cag-sun} are derived from graph~$(\overline{S_{3}})^{+}$, with different choices of the vertex~$s$.
The following focuses us on the vertex set of a forbidden configuration and simplicial vertices.

\begin{proposition}\label{lem:seed}
  Let $F$ be the vertex set of a forbidden configuration of~$H$.
  Then $G[F\cup S]$ is not a circular-arc graph.
\end{proposition}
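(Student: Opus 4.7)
The plan is to set $G' = G[F \cup S]$, observe that $G'$ is a split graph in which the simplicial vertex $s \in S$ of $G$ remains simplicial (since $N_{G'}[s] = N_G[s] \cap V(G')$ is a subset of the clique $N_G[s]$), and apply Lemma~\ref{lem:split-non-cag-but-interval} to $G'$. The split partition of $G'$ is $(F \cap K) \uplus S$, and under this partition $N_{G'}(s) = K_s \cap F$, with the remaining clique part being $K_o \cap F$. Write $H' = (G')^{N_{G'}[s]}$.

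The central step of the argument is the identity $H' = H[F \cup S]$. Granting this, the conclusion is immediate by cases on what kind of forbidden configuration $H[F]$ is. If $H[F]$ is a minimal non-interval graph, then $H'$ contains it as an induced subgraph and therefore fails to be an interval graph. If instead $H[F]$ is an annotated copy of a graph from Figure~\ref{fig:non-cag-to-interval}, the annotation is faithfully inherited by $H'$, because under the split partition of $G'$ the square vertices of $F$ are exactly $K_s \cap F$ and the rhombus vertices are exactly $K_o \cap F$; hence $H'$ contains an annotated copy of the same figure. In either case Lemma~\ref{lem:split-non-cag-but-interval}, applied to $G'$ with simplicial vertex $s$, rules out $G'$ being a circular-arc graph.

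The main obstacle is verifying the identity $H' = H[F \cup S]$. The two graphs agree trivially on pairs of vertices outside $N_G[s]$, where both inherit their edges from $G$, so the real content is checking that an edge of $H$ involving $N_G[s]$ is still detected after restricting to $V(G')$. Such edges are certified by witnesses (vertices outside both neighborhoods, or private neighbors of the non-$N_G[s]$ endpoint), and the crucial observation is that every witness must lie in $S$, which is already a subset of $V(G') = F \cup S$. Concretely: for two vertices $u, v \in N_G[s] \subseteq K \cup \{s\}$, a witness is nonadjacent to both and hence cannot sit in the clique $K$, forcing it into $S \setminus \{s\}$; for $u \in N_G[s]$ and $v \in F \cap K_o$ with $uv \in E(G)$, a witness is a vertex of $N_G(v) \setminus N_G[u]$, which cannot lie in $K$ because $u \in K$, and so again lies in $S$. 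The one degenerate subcase, a $K_s$-vertex $u$ adjacent in $G$ to some $v \in S \setminus \{s\}$, is vacuous because $N_G[v] \subseteq N_G[u]$ forces $u$ and $v$ to be nonadjacent in $H$. Since all necessary witnesses are preserved, $H'$ and $H[F \cup S]$ coincide, and the proof is complete.
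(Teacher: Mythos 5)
Your proof is correct and follows essentially the same route as the paper: both arguments hinge on the observation that every witness certifying an edge of $H$ incident to $N_G[s]$ must lie in $S$ and therefore survives the restriction, so the auxiliary graph of the induced subgraph still exhibits the annotated forbidden configuration, and Lemma~\ref{lem:split-non-cag-but-interval} applies. The only cosmetic difference is that the paper restricts to $F\cup W\cup\{s\}$ for a chosen witness set $W\subseteq S$ and then invokes heredity, whereas you work with all of $F\cup S$ at once and prove the slightly stronger identity $(G[F\cup S])^{N[s]}=H[F\cup S]$.
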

\begin{proof}
  For each edge of~$H$ between~$F\cap K_{s}$ and~$F\cap K$, we take a witness.  Let $W$ denote the set of witnesses, and let $G_{0}$ denote the subgraph of~$G$ induced by~$F\cup W\cup \{s\}$.
  (One edge may be witnessed by more than one vertex in~$W$.)
  We argue that $F$ induces the same subgraph in~$G_{0}^{N[s]}$ as in~$H$.
  By construction, the subgraph in~$G_{0}^{N[s]}$, $H$, and~$G$ induced by~$F\setminus K_{s}$ are the same.
  For two vertices~$v\in F\cap K_{s}$ and~$x\in F\cap S$ are adjacent in~$G_{0}^{N[s]}$ and in~$H$ if and only if they are not adjacent in~$G$.
  For each pair of adjacent vertices~$u\in K_{s}\cap F$ and~$v\in K\cap F$ in~$H$, the selection of~$W$ ensures that they are adjacent in~$G_{0}^{N[s]}$.
  On the other hand, if two vertices~$u\in K_{s}\cap F$ and~$v\in K\cap F$ are not adjacent in~$H$, they cannot be adjacent in~$G_{0}^{N[s]}$.
  Now that $G_{0}^{N[s]}$ contains an annotated copy of  a forbidden configuration, $G_{0}$ is not a circular-arc graph by Lemma~\ref{lem:split-non-cag-but-interval}.
  Since the vertex set of~$G_{0}$ is $F\cup W\cup \{s\}$, the statement follows.
\end{proof}

To decrease the number of cases to consider, we need a few observations.
First, we can assume that every vertex in~$S$ is adjacent to a proper and nonempty subset of~$F\cap K$: we can reduce to a known case otherwise.


\begin{lemma}\label{lem:key}
  Let $F$ be the vertex set of a forbidden configuration of~$H$.
  If some vertex in~$S$ is adjacent to none or all the vertices in~$F\cap K$ in~$G$, then $G$ contains an induced copy of net$^\star$, a graph in region two, or~$(\overline{S_{k}})^{+}$ for some $k \ge 3$.
\end{lemma}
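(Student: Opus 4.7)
The plan is to use the witness construction from Proposition~\ref{lem:seed} to produce, inside~$G$, one of the three named kinds of minimal non--circular-arc split graph, by appending the distinguished vertex~$y$ to a suitably chosen small subgraph.

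\textbf{Setup.} Following the proof of Proposition~\ref{lem:seed}, I pick a witness $w\in S$ for each edge of $H[F]$ joining $F\cap K_s$ to $F\cap K$; call the resulting set~$W$. Then $G_0:=G[F\cup W\cup\{s\}]$ is a split non--circular-arc graph whose auxiliary graph $G_0^{N_{G_0}[s]}$ still contains the forbidden configuration~$F$. I then argue inside $G_1:=G[V(G_0)\cup\{y\}]$, which is again a split graph with~$y\in S$.

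\textbf{Case A: $y$ is adjacent to every vertex of $F\cap K$.} Since $y\in S$, it has no edge to any other vertex of $G_0\setminus(F\cap K)$. I go through the forbidden configurations one by one. The model subcase is $F$ an $\ell$-hole of~$H$: by Lemma~\ref{lem:alternative} the subgraph of~$G$ on $F\cup W$ is an $\overline{S_\ell}$ whose clique is~$F$, and attaching $y$ adjacent to the entire clique yields precisely $(\overline{S_\ell})^+$. The minimal non-interval configurations long claw, whipping top, $\dag$ and $\ddag$ are treated analogously: in each case Lemma~\ref{lem:alternative} identifies the standard split-graph analogue on $F\cup W$, and attaching $y$ to its $K$-side produces the region-2 graph of Figure~\ref{fig:long-claw-derived}, Figure~\ref{fig:whipping-top-derived}, or an $S^1_k$ or $S^2_k$. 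The annotated graphs of Figure~\ref{fig:non-cag-to-interval} are handled by direct inspection of how $y$ plugs into each template.

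\textbf{Case B: $y$ is adjacent to no vertex of $F\cap K$.} Then $y$ is isolated from $F\cup W$ in~$G$. The clean subcase is $F$ a $3$-hole of~$H$: the substructure on $F\cup W$ is a net, and adjoining~$y$ yields net$^\star$. For larger configurations I exploit the fact that $y$ itself is a simplicial vertex of~$G$, so the analysis can be redone with $N_G[y]$ in place of~$N_G[s]$; the set $F\cap K$, which by hypothesis lies entirely outside $N(y)$, now sits on the ``$K_o$'' side of the new decomposition, and the resulting forbidden configuration in $G^{N[y]}$ falls into Case~A, producing an $(\overline{S_k})^+$ or a region-2 graph. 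The remaining forbidden configurations are handled in an analogous fashion, by tracking which edges require witnesses after the switch from~$s$ to~$y$.

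\textbf{Main obstacle.} The principal difficulty is the length of the case analysis: each annotated graph of Figure~\ref{fig:non-cag-to-interval} and each minimal non-interval graph must be examined separately, and in each the split of $F$ into $F\cap K_s$ and $F\cap K_o$ interacts subtly with the adjacency assumption on~$y$. Particular subtleties arise when $y=s$, when $F\cap K_o$ or $F\cap K_s$ is empty, and when several candidate witnesses could serve the same edge of $H[F]$. Verifying that the graph produced in Case~B is always one of the three named families --- rather than some exotic extra graph --- is the most delicate part of the proof, and is precisely what justifies the particular list of subgraphs in Figures~\ref{fig:net-star}, \ref{fig:the-weird}, and the $(\overline{S_k})^+$ family.
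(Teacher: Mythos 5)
Your proposal misses the one idea that makes the paper's proof short, and the case analysis you substitute for it is neither carried out nor, where sketched, correct. The paper does \emph{not} examine the forbidden configurations one by one here. It observes that by Proposition~\ref{lem:seed} the split graph $G[F\cup S]$ is not a circular-arc graph, hence not a Helly circular-arc graph, and then invokes the already-established Theorem~\ref{thm:split-hcag}: $G[F\cup S]$ must contain a graph from region~1 or region~2. If it is a region-2 graph we are done with no further work and \emph{without using the special vertex at all}. Only in the remaining case, where $G[F\cup S]$ contains an $\overline{S_k}$ on a vertex set $X$ with $X\cap K\subseteq F\cap K$, does the hypothesis on the special vertex $x$ enter: if $x$ is adjacent to all of $F\cap K$ it completes $X$ to $(\overline{S_k})^{+}$, and if it is adjacent to none of $F\cap K$ it is an isolated vertex attached to $\overline{S_k}$, yielding net$^\star$ (for $k=3$), rising sun$^\star=S^2_2$ (for $k=4$), or sun$^\star=S^1_2$ (for $k\ge 5$), the latter two being region-2 graphs. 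This routes all the combinatorial difficulty through Theorem~\ref{thm:split-hcag}, which has already absorbed the witnessing issues via Proposition~\ref{lem:h-maxclique}.

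Concretely, the gaps in your version are these. In Case~A you propose to ``attach $y$ to the $K$-side'' of the region-2 analogues of the long claw, whipping top, $\dag$ and $\ddag$ configurations; but those analogues are already minimal non-circular-arc graphs, so the extra vertex is not needed there and attaching it does not produce a graph on your target list --- the special vertex is only needed to break the circular-arc-ness of $\overline{S_k}$, which is a (non-Helly) circular-arc graph. Moreover your appeal to Lemma~\ref{lem:alternative} silently assumes the configurations are \emph{witnessed} as cliques, not merely edge-by-edge, which your set $W$ does not guarantee; the paper's detour through Theorem~\ref{thm:split-hcag} is precisely how unwitnessed cliques get converted into suns and hence into the $\overline{S_3}$ case. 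In Case~B the reduction ``redo the analysis with $N_G[y]$ in place of $N_G[s]$ and land in Case~A'' is unjustified: nothing ensures that $G^{N[y]}$ contains a forbidden configuration related to $F$ in a controlled way, and you yourself flag the resulting verification as undone. Finally, the annotated configurations of Figure~\ref{fig:non-cag-to-interval} are interval graphs, so ``direct inspection of how $y$ plugs in'' is exactly the lengthy work the paper defers to Lemma~\ref{lem:interval-configurations}; it cannot be waved through inside this lemma. As written, the proposal is a plan whose hard steps are all deferred, and the plan itself points in a direction the paper deliberately avoids.
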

\begin{proof}
  By Proposition~\ref{lem:seed}, $G[F\cup S]$ is not a circular-arc graph by Lemma~\ref{lem:split-non-cag-but-interval}.
  By Theorem~\ref{thm:split-hcag}, $G[F\cup S]$ contains an induced copy of some graph in regions 1 and 2.
  We are done if $G[F\cup S]$ contains a graph in region 2. 
  In the rest, suppose that there exists $X\subseteq F\cup S$ such that $G[X]$ is isomorphic to~$\overline{S_{k}}$ for some $k$.
  Let $x\in S$ be a vertex that is adjacent to none or all the vertices in~$F\cap K$ in~$G$.
  Note that $x\not\in X$.
  If $X\cap K\subseteq F\cap K\subseteq N(x)$, then $G[X\cup \{x\}]$ is $(\overline{S_{k}})^{+}$ for some $k \ge 3$.
  Otherwise, $G[X\cup \{x\}]$ contains an induced net$^\star$ when $k \ne 4$, rising sun$^\star$ when $k = 4$, or sun$^\star$ when $k \ge 5$.
\end{proof}

In particular, Lemma~\ref{lem:key} covers the case when $H$ contains a minimal non-interval graph that is disjoint from~$K_{s}$ or~$K_{o}$; we can use $s$ as the special vertex required by Lemma~\ref{lem:key}.  Since $K_{o}$ is a clique and all vertices in~$S$ are simplicial in~$H$, the subgraph can be disjoint from~$K_{s}$ only when it is a net, sun, or rising sun.  On the other hand, every forbidden configuration in Figure~\ref{fig:non-cag-to-interval} intersects both $K_{s}$ and~$K_{o}$ by definition.

\begin{corollary}\label{cor:key} 
  If $H$ contains a minimal non-interval graph that is disjoint from~$K_{o}$ or~$K_{s}$, then $G$ contains an induced copy of net$^\star$, a graph in region two, or~$(\overline{S_{k}})^{+}$ for some $k \ge 3$.
\end{corollary}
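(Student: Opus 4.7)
The plan is to derive the corollary as a direct application of Lemma~\ref{lem:key}, using $s$ itself as the special vertex of~$S$ required by that lemma. Since a minimal non-interval graph is, by the definition given just before Section~4.2, a forbidden configuration, we may take $F$ to be its vertex set and invoke Lemma~\ref{lem:key}, provided we verify that $s$ is adjacent in~$G$ to either none or all of~$F \cap K$.

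The verification splits into two cases according to which side $F$ avoids. Recall $K_{s} = N_{G}(s)$ and $K_{o} = K \setminus N_{G}(s)$. First I would observe that if $F$ is disjoint from~$K_{s}$, then $F \cap K \subseteq K_{o}$, so $s$ is adjacent to \emph{none} of the vertices in $F \cap K$. Symmetrically, if $F$ is disjoint from~$K_{o}$, then $F \cap K \subseteq K_{s} = N_{G}(s)$, so $s$ is adjacent to \emph{all} of the vertices in~$F \cap K$. In either case, $s$ qualifies as the vertex ``adjacent to none or all the vertices in~$F \cap K$'' whose existence is hypothesized by Lemma~\ref{lem:key}, and that lemma then delivers an induced copy of net$^\star$, a graph in region two, or~$(\overline{S_{k}})^{+}$ for some $k \ge 3$.

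It is worth noting why the statement restricts to \emph{minimal non-interval} subgraphs rather than arbitrary forbidden configurations. Each graph in Figure~\ref{fig:non-cag-to-interval} is annotated with at least one vertex that must lie in~$K_{s}$ and at least one that must lie in~$K_{o}$, so no annotated copy can be disjoint from either of these two sets; the hypothesis of the corollary is therefore vacuous for such configurations. Moreover, since $K_{o}$ is a clique of~$H$ and every vertex of~$S \setminus \{s\}$ is simplicial in~$H$ (by Proposition~\ref{lem:trivial}, $s$ being simplicial in~$G$), any minimal non-interval subgraph of~$H$ disjoint from~$K_{s}$ lies inside a split graph whose clique side has size at most~$|K_{o}|$, which forces it to be one of the three split minimal non-interval graphs (net, sun, rising sun), though this observation is not strictly needed in the argument.

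I do not foresee a substantive obstacle: the corollary is formally a packaging of the proof sketch already contained in the paragraph preceding it, and no new combinatorial reasoning is required beyond bookkeeping of which side of the partition~$K = K_{s} \uplus K_{o}$ the relevant vertices inhabit.
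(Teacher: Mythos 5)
Your proof is correct and follows the paper's own (very brief) justification exactly: the paper likewise takes $s$ as the special vertex of Lemma~\ref{lem:key}, noting that disjointness of the minimal non-interval subgraph from~$K_{o}$ (resp.~$K_{s}$) makes $s$ adjacent to all (resp.\ none) of~$F\cap K$, and also records your side remark that the configurations of Figure~\ref{fig:non-cag-to-interval} intersect both sets by definition. Nothing is missing.
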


Second, we extend Proposition~\ref{lem:h-maxclique} to the new setting.
Let $K'\subseteq K$ be a clique of~$H$.
A vertex~$w\in S$ is a \emph{witness} of~$K'$ if $w$ is adjacent to all the vertices in~$K'$ in~$H$; i.e.,
\[
  K_{o}\cap K'\subseteq N_{G}(w) \subseteq V(G)\setminus (K_{s}\cap K').
\]
The clique~$K'$ is then \emph{witnessed}.
Recall that every edge between~$K_{s}$ and~$K$ is witnessed, but edges among $K_{o}$ do not need witnesses.

\begin{proposition}\label{lem:h-maxclique-cag}
  If $H[K]$ has an unwitnessed clique~$K'$ with $|K'\cap K_{o}| \le 1$, then $H$ contains a sun of which 
  \begin{enumerate}[i)]
  \item all the degree-two vertices are from~$S$, and
  \item at most one vertex is from~$K_{o}$.
  \end{enumerate}
\end{proposition}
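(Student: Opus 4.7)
My plan is to mimic the proof of Proposition~\ref{lem:h-maxclique} while carefully tracking the partition $K = K_s \cup K_o$ and enforcing both conclusions (i) and (ii). The key is to start from a \emph{smallest} unwitnessed clique consistent with the hypothesis, then use the fact that each one-vertex-smaller subclique is witnessed to exhibit the desired sun.

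Let $K'$ be a smallest unwitnessed clique of $H[K]$ with $|K' \cap K_o| \le 1$. First I would argue $|K'| \ge 3$. A singleton $\{v\}\subseteq K_s$ has a witness because $v$ is not universal in $G$ and hence has a non-neighbor in $S$, which is adjacent to $v$ in $H$. An edge in $H[K]$ with both ends in $K_s$, or with one end in each of $K_s$ and $K_o$, has a witness in $S$ directly from the construction of $H$ recalled at the start of Section~\ref{sec:cag}. Edges with both ends in $K_o$ are excluded by the hypothesis $|K'\cap K_o|\le 1$. The only remaining awkward case is a singleton $\{v\}\subseteq K_o$ with no $S$-neighbor in $G$; such a $v$ would be simplicial in $G$, and I would dispose of this corner case either by a brief separate argument or by reselecting the distinguished simplicial vertex $s$.

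Next, pick $v_1,v_2,v_3 \in K'$, arranging that $v_3$ is the unique vertex of $K'\cap K_o$ whenever this set is non-empty; in particular, at most one of $v_1,v_2,v_3$ lies in $K_o$. For each $i\in\{1,2,3\}$, the proper subclique $K'\setminus\{v_i\}$ still satisfies $|(K'\setminus\{v_i\})\cap K_o|\le 1$, so by the minimality of $K'$ it is witnessed by some $w_i \in S$. The definition of witness then forces $w_i$ to be non-adjacent in $H$ to $v_i$: otherwise $w_i$ would be adjacent to every vertex of $K'$ and would witness $K'$ itself, a contradiction. Note also that $w_i \neq s$, because $s$ is universal in $H$ and would therefore witness every clique of $H[K]$, contradicting the assumption that $K'$ is unwitnessed.

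To finish, I would verify that $\{v_1,v_2,v_3,w_1,w_2,w_3\}$ induces a sun in $H$. The $v_j$'s form a triangle, being pairwise adjacent in the clique $K'$; each $w_i$ is adjacent in $H$ to $\{v_j : j\neq i\}$ but not to $v_i$; the $w_i$'s are pairwise distinct, since a common $w_i=w_j$ would witness $K'\setminus\{v_i\}\cup K'\setminus\{v_j\}=K'$; and they are pairwise non-adjacent because $S\setminus\{s\}$ is an independent set in $H$. The degree-two vertices of this sun are $w_1,w_2,w_3\in S$, establishing (i); and since the $w_i$'s lie outside $K$ and at most one $v_j$ is in $K_o$, the sun has at most one vertex from $K_o$, establishing (ii). The main obstacle I anticipate is the small-$K'$ case, specifically singletons from $K_o$ with no $S$-neighbour; apart from that, the argument is a direct refinement of Proposition~\ref{lem:h-maxclique}.
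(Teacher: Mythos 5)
Your argument is essentially the paper's own proof: one takes a smallest unwitnessed clique $K'$ subject to the $|K'\cap K_{o}|\le 1$ constraint, observes that $|K'|>2$ because every edge of $H[K]$ with at most one end in $K_{o}$ is witnessed by construction, and then the three witnesses of the cliques $K'\setminus\{v_{i}\}$ supplied by minimality assemble into the required sun, with (i) and (ii) following since the witnesses lie in $S$ and at most one $v_{i}$ lies in $K_{o}$. The one corner case you flag (a singleton $\{v\}\subseteq K_{o}$ with no neighbour in $S$) is sidestepped in the paper by taking the minimum only over unwitnessed cliques of order at least two, so that the minimality argument only ever needs subcliques of order at least two to be witnessed and singletons never enter the picture.
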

\begin{proof}
  Suppose that $K'$ is a smallest unwitnessed clique of~$H[K]$ such that $|K'\setminus K_{s}| \le 1$ and~$|K'|\ge 2$.
  By the construction of~$H$, every edge among~$K_{s}$ and every edge between~$K_{s}$ and~$K_{o}$ have a witness.
  Thus, $|K'| > 2$.
  We take three vertices~$v_{1}$, $v_{2}$, and~$v_{3}$ from~$K'$.
  By the selection of~$K'$, for each $i = 1, 2, 3$, the clique~$K'\setminus \{v_{i}\}$ has a witness~$x_{i}\in S$.
  Since $x_{i}$ is not a witness of~$K'$, it cannot be adjacent to~$v_{i}$ in~$H$.
  Then $\{v_{1}, v_{2}, v_{3}, x_{1}, x_{2}, x_{3}\}$ induces a sun.
\end{proof}

For suns, Lemma~\ref{lem:key} and Corollary~\ref{cor:key} 
  allow us to ignore the first three in Figure~\ref{fig:split-non-cag-sun}.  In (a), the vertex~$s$ is adjacent to all the three vertices from~$K$; in (b) and (c), the vertex~$x_{1}$ is adjacent to none and all three vertices from~$K$, respectively.
  Other possible configurations of the sun that are omitted from Figure~\ref{fig:split-non-cag-sun} can be reduced to the ones listed there
  by further observations.
We now derive the subgraphs of~$G$ corresponding to these configurations in Figure~\ref{fig:split-non-cag-sun}.  Similar to Lemma~\ref{lem:alternative}, we do not need witnesses for suns.

\begin{proposition}\label{lem:sun}
  If $H$ contains an annotated copy of any graph in Figure~\ref{fig:split-non-cag-sun}, then $G$ contains an induced $(\overline{S_{3}})^{+}$, sun$^\star$, or the graph in Figure~\ref{fig:the-weird}. 
\end{proposition}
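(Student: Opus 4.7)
The plan is a case analysis on the five annotated suns in Figure~\ref{fig:split-non-cag-sun}, writing $F$ for the sun's vertex set.

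For configurations (a), (b), and (c) I would invoke Lemma~\ref{lem:key}. In (a) all three $v_i$ lie in $K_s = N_G(s)$, so the simplicial vertex $s \in S$ is adjacent in $G$ to every vertex of $F \cap K$, which verifies the hypothesis of the lemma. In (b) and (c) the vertex $x_1 \in S$ plays this role: a short argument using the containment-based definition of the edges of $H = G^{N[s]}$ together with the sun's adjacencies around $x_1$ forces $x_1$ to be adjacent in $G$ to, respectively, none or all of $F \cap K$. In each case Lemma~\ref{lem:key} yields an induced net$^\star$, graph in region~$2$, or $(\overline{S_k})^+$ in $G$. I would then check that, combined with the sun in $H$, every such output contains one of $(\overline{S_3})^+$, sun$^\star$, or the graph in Figure~\ref{fig:the-weird}: the first hits the target directly, and for $k \geq 4$ the standard observations that $\overline{S_4}$ contains an induced rising sun and $\overline{S_k}$ for $k \geq 5$ an induced sun combine with the apex vertex of~$(\overline{S_k})^+$ to expose sun$^\star$ or Figure~\ref{fig:the-weird}.

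For configurations (d) and (e), Lemma~\ref{lem:key} is unavailable because every vertex of $S$ we can identify is adjacent to a proper nonempty subset of $F \cap K$. Instead, the plan is to construct the target subgraph explicitly inside $G[F \cup \{s\}]$. In both configurations $F \cap K$ is a clique of~$G$, and the simplicial vertex $s$ is adjacent precisely to the vertices of $F \cap K_s$; moreover the sun's non-edges from $x_2, x_3$ to~$F \cap K$, together with the containment rule, force a specific attachment of $x_2$ and $x_3$ to two of the triangle vertices, while the adjacencies from $x_2, x_3$ to the $K_s$-vertex $v_3$ (and, in (e), a handful of further edges involving $x_1$) remain free. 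A subcase analysis on these residual adjacencies identifies the resulting induced subgraph as $(\overline{S_3})^+$ in the generic subcase (the $4$-clique $\{v_1,v_2,v_3,x_1\}$ together with three pendant-like vertices $s, x_2, x_3$ attached to distinct triangle vertices), and as Figure~\ref{fig:the-weird} or sun$^\star$ in the remaining subcases.

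The main obstacle is precisely the subcase analysis for (d) and (e). Because the edges of~$H$ between $K_s$ and~$S$ record closed-neighborhood containments rather than $G$-edges, the sun in~$H$ does not by itself determine all relevant adjacencies in~$G$; witnesses from~$S$ and the no-true-twins assumption must be brought in to pin down the structure. The delicate step is verifying that every consistent resolution of the residual adjacencies lands in exactly one of the three named targets, and that the case breakdown is exhaustive.
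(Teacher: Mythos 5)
Your proposal breaks down in two places. For configurations (a)--(c) you are right that the hypothesis of Lemma~\ref{lem:key} is met ($s$, resp.\ $x_{1}$, is adjacent in $G$ to all, none, or all of $F\cap K$), but that lemma only delivers ``net$^\star$, a graph in region two, or $(\overline{S_{k}})^{+}$ for some $k\ge 3$,'' and your proposed repair---that every such output contains one of the three target graphs---is false. The outputs net$^\star$, the graphs of Figures~\ref{fig:long-claw-derived} and~\ref{fig:whipping-top-derived}, $S^{1}_{k}$, $S^{2}_{k}$, and $(\overline{S_{4}})^{+}$ are all themselves \emph{minimal} split graphs that are not circular-arc graphs, so none of them contains an induced $(\overline{S_{3}})^{+}$, sun$^\star$, or the graph of Figure~\ref{fig:the-weird}. (Your fallback for $k\ge 5$ also fails: in $\overline{S_{k}}$ every independent-set vertex misses only two clique vertices, hence meets every triangle, so $(\overline{S_{k}})^{+}$ has no vertex isolated from an induced sun and need not contain sun$^\star$.) The detour through Lemma~\ref{lem:key} is both too weak and unnecessary: because $H$-edges between $S$ and $K_{s}$ are exactly the $G$-non-edges, $H$-edges between $S$ and $K_{o}$ are exactly the $G$-edges, and $s$ is adjacent precisely to $F\cap K_{s}$, \emph{every} adjacency inside $F\cup\{s\}$ is determined by the annotation; a direct computation gives $G[F\cup\{s\}]\cong(\overline{S_{3}})^{+}$ in cases (a), (c), (d) and $G[F\cup\{s\}]\cong\,$sun$^\star$ in case (b). Your worry about ``residual adjacencies'' is unfounded, and in particular case (d) needs no subcase analysis.

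The second gap is configuration (e). There $G[F\cup\{s\}]$ is the net plus a vertex $x_{1}$ dominating all of it, which \emph{is} a circular-arc graph, and the nine-vertex graph of Figure~\ref{fig:the-weird} cannot sit inside a seven-vertex set, so no construction ``explicitly inside $G[F\cup\{s\}]$'' can succeed; witnesses from outside $F$ are unavoidable here, not just helpful. The missing idea is a dichotomy: if the triangle $\{x_{1},v_{2},v_{3}\}$ of $H$ is witnessed, replace $x_{1}$ by the witness and land in configuration (c); otherwise take a witness $w_{1}$ of the edge $x_{1}v_{2}$ and a witness $w_{2}$ of $x_{1}v_{3}$ (each necessarily nonadjacent to the remaining triangle vertex, else the triangle would be witnessed), verify that $G[F\cup\{w_{1},w_{2}\}]\cong\overline{S_{4}}$, and adjoin $s$, which is adjacent to exactly two clique vertices of this $\overline{S_{4}}$, to obtain the graph of Figure~\ref{fig:the-weird}. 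Without this step the stated conclusion is not reached for case (e).
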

\begin{proof}
  Let $F$ denote the vertex set of the sun.
  If $H[F]$ is an annotated copy of Figure~\ref{fig:split-non-cag-sun:a}, \ref{fig:split-non-cag-sun:c}, or \ref{fig:split-non-cag-sun:d}, then $G[F\cup \{s\}]$ is isomorphic to~$(\overline{S_{3}})^{+}$.
  For Figure~\ref{fig:split-non-cag-sun:a}, $G[F]$ is a net, of which all the three vertices are adjacent to~$s$.
  For Figures~\ref{fig:split-non-cag-sun:c} and~\ref{fig:split-non-cag-sun:d}, vertices~$x_{2}$, $x_{3}$, and~$s$ have degree one in~$G[F\cup \{s\}]$; their only neighbors are, respectively, $v_{3}$, $v_{2}$, and~$v_{1}$, which form a clique with $x_{1}$.
  If $H[F]$ is an annotated copy of Figure~\ref{fig:split-non-cag-sun:b}, then $G[F\cup \{s\}]$ is isomorphic to sun$^\star$.
The subgraph of~$G$ induced by~$F\cup \{s\}\setminus \{x_{1}\}$ is a sun, in which $x_{1}$ has no neighbor.


Henceforth, $H[F]$ is an annotated copy of Figure~\ref{fig:split-non-cag-sun:e}.
If there is a witness~$x$ of~$\{x_{1}, v_{2}, v_{3}\}$, then replacing $x_{1}$ with the witness leads to an annotated copy of Figure~\ref{fig:split-non-cag-sun:c}, which has been discussed above.
  Otherwise, by definition, we can find a witness~$w_{1}$ of the edge~$x_{1} v_{2}$; it is not adjacent to~$v_{3}$ as otherwise we are in the previous case.
  Likewise, we can find a witness~$w_{2}\in S\setminus N_{H}(v_{2})$ of the edge~$x_{1} v_{3}$.
  The subgraph of~$G$ induced by~$F\cup \{w_{1}, w_{2}\}$ is isomorphic to~${S_{4}}$, in which $s$ is adjacent to~$x_{1}$ and~$v_{1}$.  They together induce the graph in Figure~\ref{fig:the-weird}.
\end{proof}

Next we adapt Corollary~\ref{lem:simplicial-iff-i-helly}.  Note that in each forbidden configuration, a simplicial vertex has at most two neighbors.

\begin{lemma}\label{lem:simplicial-iff-i}
  If $G$ is not a circular-arc graph and~$H$ does not contain an annotated copy of any graph in Figure~\ref{fig:split-non-cag-sun},
  then $H$ contains a forbidden configuration~$F$ such that
  \begin{enumerate}[i)]
  \item all simplicial vertices of~$F$ are from~$S \cup K_{o}$, and
  \item if a simplicial vertex of~$F$ is from~$K_{o}$, then at least one of its neighbors in~$F$ is from~$K_{o}$.
  \end{enumerate}
\end{lemma}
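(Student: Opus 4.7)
The plan is to adapt the replacement argument from Proposition~\ref{lem:simplicial-iff-i-helly} to the three-way partition $K_{s} \cup K_{o} \cup S$. By Lemma~\ref{lem:split-non-cag-but-interval}, $H$ contains at least one forbidden configuration. I will pick $F$ to minimize the total number of simplicial vertices of $F$ that violate~(i) or~(ii), and argue this minimum is $0$. A simplicial vertex $v$ of $F$ that violates one of the conditions will be eliminated by swapping it for a witness $w \in S \setminus \{s\}$ of the clique $N_{F}[v]$: such a $w$ is simplicial in $H$ by Proposition~\ref{lem:trivial}, and its $H$-neighborhood being a clique will force $w$ to have no neighbors in $V(F) \setminus N_{F}[v]$. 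Hence the swap will produce an isomorphic induced forbidden configuration in which $w \in S$ plays the role of $v$, strictly decreasing the violation count.

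The first step I will carry out is the routine inspection that in every forbidden configuration (Figures~\ref{fig:non-interval} and~\ref{fig:non-cag-to-interval}) no neighbor of a simplicial vertex is itself simplicial. Combined with Proposition~\ref{lem:trivial}, which makes every vertex of $S \setminus \{s\}$ simplicial in $H$, and the observation that $s$ is universal in $H$ and therefore too high-degree to appear in any forbidden configuration, this will yield $N_{F}(v) \subseteq K$, so that $N_{F}[v]$ is a clique of $H[K]$.

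Next, I will use Proposition~\ref{lem:h-maxclique-cag} to produce the witness in the easy regime $|N_{F}[v] \cap K_{o}| \leq 1$: an unwitnessed clique of this form forces $H$ to contain a sun matching Figure~\ref{fig:split-non-cag-sun}(a) or (b), contradicting the hypothesis. This subcase handles every simplicial $v \in K_{o}$ violating~(ii) (where $N_{F}[v] \cap K_{o} = \{v\}$ by the very definition of the violation), as well as every simplicial $v \in K_{s}$ whose clique $N_{F}[v]$ meets $K_{o}$ in at most one vertex.

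The hard part will be the residual subcase $v \in K_{s}$ simplicial with $|N_{F}[v] \cap K_{o}| = 2$. Enumerating the simplicial vertices of each forbidden configuration, this can occur only when $F$ is a sun, a whipping top, or a $\ddag$ graph, and $v$ is a degree-two simplicial vertex whose two neighbors both fall in $K_{o}$. For the sun case, I will inspect the annotations of the third clique vertex and of the other two degree-two vertices to produce an annotated copy of Figure~\ref{fig:split-non-cag-sun}(c), (d), or~(e), or reduce to~(a)--(b) via the ``further observations'' alluded to just before Proposition~\ref{lem:sun}, contradicting the hypothesis. For the whipping top and larger $\ddag$ graphs, I will either derive the clique witness directly from the individual edge witnesses forced by the $K_{s}$--$K_{o}$ edges inside $N_{F}[v]$, or else exhibit an embedded annotated sun matching Figure~\ref{fig:split-non-cag-sun}. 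Iterating the replacement then delivers a forbidden configuration satisfying both~(i) and~(ii).
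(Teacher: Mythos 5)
Your proposal is correct and follows essentially the same route as the paper's proof: replace a violating simplicial vertex by a witness of its closed neighborhood when one exists, invoke Proposition~\ref{lem:h-maxclique-cag} to rule out unwitnessed cliques meeting $K_{o}$ in at most one vertex (which disposes of every violation of~(ii) and of every $v\in K_{s}$ outside your hard case), and in the residual case $v\in K_{s}$ with two $K_{o}$-neighbors derive an annotated sun of type \ref{fig:split-non-cag-sun:c}, \ref{fig:split-non-cag-sun:d}, or \ref{fig:split-non-cag-sun:e} from the two edge witnesses. The only real difference is that the paper avoids your per-configuration case analysis in that last step with a single uniform observation --- in every forbidden configuration the two $K_{o}$-neighbors $v_{1},v_{2}$ of $v$ have another common neighbor $y$ in $F$, so $\{v,y,v_{1},v_{2},w_{1},w_{2}\}$ already induces the required sun --- which you could adopt to shorten your argument.
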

\begin{proof}
  By Lemma~\ref{lem:split-non-cag-but-interval}, $H$ contains an annotated copy of a forbidden configuration; let it be $F$.
  We are done if all simplicial vertices of~$F$ are from~$S$.
  Suppose that a simplicial vertex~$x$ of~$F$ is from~$K$.
  Note that $N_{F}(x)$ contains one or two vertices and they are not simplicial.
  If there exists a witness~$y$ of~$N_{F}[x]$, then we can replace $x$ with~$y$.
  Henceforth we assume $N_{F}[x]$ is not witnessed.
  As a result, if $x\in K_{s}$, then
  $N_{F}(x)$ must consist of two vertices, and they are both from~$K_{o}$ by Proposition~\ref{lem:h-maxclique-cag}.
  Let $N_{F}(x) = \{v_{1}, v_{2}\}$.
  For $i = 1, 2$, we can find a witness~$w_{i}$ of the edge~$x v_{i}$.
  By checking all the forbidden configurations we note that $v_{1}$ and~$v_{2}$ always has another common neighbor in~$F$, denoted as~$y$.
  Then $H[\{x, y, v_{1}, v_{2}, w_{1}, w_{2}\}]$ is an annotated copy of Figure~\ref{fig:split-non-cag-sun:c}, \ref{fig:split-non-cag-sun:d}, or \ref{fig:split-non-cag-sun:e}, a contradiction.
  Thus, $x\in K_{o}$.  Since $N_{F}[x]$ is not witnessed,
at least one vertex in~$N_{F}(x)$ is from~$K_{o}$ by Proposition~\ref{lem:h-maxclique-cag}.
\end{proof}

With these observations, we are able to correlate forbidden configurations with minimal split graphs that are not circular-arc graphs.
We first deal with the forbidden configurations in Figure~\ref{fig:non-cag-to-interval}.
Since no forbidden configuration has a clique of order more than four, the number of vertices from the clique~$K_{o}$ is at most four.
We try to reduce a forbidden configuration in~$G^{N[s]}$ to~$G^{K}$, or~$G^{N[s']}$ for another simplicial vertex~$s'$ that has more neighbors in this forbidden configuration.
This is more convenient than translating the forbidden configuration to a forbidden induced subgraph in~$G$. 
Note that $K_{s}\cup S\setminus \{s\}$ induces the same subgraph in~$H$ and~$G^{K}$.

\begin{lemma}\label{lem:interval-configurations}
  If $H$ contains an annotated copy of any graph in Figure~\ref{fig:non-cag-to-interval},
  then $G$ contains an induced copy of net$^\star$, the graph in Figure~\ref{fig:long-claw-derived}, Figure~\ref{fig:whipping-top-derived}, or Figure~\ref{fig:the-weird}, $S^{1}_{k}$, $S^{2}_{k}$, or~$\overline{S_{k+1}^{+}}, k \ge 2$.
\end{lemma}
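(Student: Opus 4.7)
The plan is to carry out a case analysis over the seven subfigures of Figure~\ref{fig:non-cag-to-interval}, producing in each case one of the listed forbidden subgraphs of $G$. The construction mirrors Lemma~\ref{lem:alternative}: starting from an annotated copy of the configuration on vertex set $F$ inside $H = G^{N[s]}$, I would pick a witness in $S$ for every edge of $H[F]$ between $K_s$ and $K_o$ (and among vertices of $K_s$), adjoin the simplicial vertex $s$ and these witnesses to $F$, and verify that the resulting induced subgraph of $G$ is isomorphic to one of the target graphs.

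Before the case analysis I would normalize $F$. We may assume $G$ is not a circular-arc graph, otherwise there is nothing to prove. Proposition~\ref{lem:sun} then lets me assume no annotated sun of Figure~\ref{fig:split-non-cag-sun} appears in $H$, since otherwise the target $(\overline{S_3})^+$, sun$^\star$, or Figure~\ref{fig:the-weird} is already obtained. Under this assumption Lemma~\ref{lem:simplicial-iff-i} places the simplicial vertices of $F$ in $S \cup K_o$, with any $K_o$-simplicial vertex having a $K_o$-neighbor inside $F$. In parallel, Proposition~\ref{lem:seed} together with Lemma~\ref{lem:key} disposes of the situations where some vertex of $S$ is adjacent to all or to none of $F \cap K$: they directly produce a net$^\star$, a region-2 graph, or an $(\overline{S_k})^+$. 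Henceforth every vertex of $S$ has a proper nonempty subset of $F \cap K$ as its neighborhood in $F$.

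The case analysis then proceeds subfigure by subfigure. Configurations~\ref{fig:configuration-7} and~\ref{fig:configuration-8} each carry two earmarked vertices in the same clique part; witnessing their $K_s$-$K_o$ edges and adjoining $s$ produces an $S^1_k$ or $S^2_k$ for the $k$ determined by the underlying $\dag$ or $\ddag$ fragment, in exact analogy with the $\dag$/$\ddag$ cases of Lemma~\ref{lem:alternative}. Configurations~\ref{fig:configuration-1}--\ref{fig:configuration-6}, each with one earmarked $K_s$-vertex and one earmarked $K_o$-vertex, yield the graph in Figure~\ref{fig:long-claw-derived} or Figure~\ref{fig:whipping-top-derived}, a small $\overline{S_{k+1}^+}$, or---for the dashed-path configurations~\ref{fig:configuration-4} and~\ref{fig:configuration-6}---a $\overline{S_{k+1}^+}$ or $S^{i}_{k}$ with arbitrarily large $k$.

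The main obstacle I anticipate is the combinatorial explosion arising from the round (unspecified) nodes of Figure~\ref{fig:non-cag-to-interval}, each of which may independently lie in $K_s$, $K_o$, or $S$. I would manage this by repeatedly invoking Lemma~\ref{lem:key} to absorb any round node violating the normalization above, using Proposition~\ref{lem:h-maxclique-cag} to force the needed witnesses into existence, and, in the dashed-path configurations, contracting consecutive same-part internal vertices to reach a short canonical form before attaching $s$ and the witnesses. The hardest subcase is expected to be configuration~\ref{fig:configuration-4}, where the path can interleave $K_s$, $K_o$, and $S$ freely and one must track which endpoint absorbs $s$ to land in the correct $\overline{S_{k+1}^+}$.
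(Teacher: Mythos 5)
Your overall skeleton (normalize via Proposition~\ref{lem:sun}, Lemma~\ref{lem:simplicial-iff-i}, and Lemma~\ref{lem:key}, then a per-configuration case analysis) matches the paper, but the proposal misses the central mechanism that makes the case analysis work. The paper does not verify directly that the subgraph of $G$ induced by $F$, $s$, and the chosen witnesses is one of the target graphs. Instead, for configurations~\ref{fig:configuration-1}--\ref{fig:configuration-6} it passes from $H=G^{N[s]}$ to $G^{K}$: because the adjacency between $K_{o}$ and $S$ is inverted between these two auxiliary graphs, each (interval) configuration of Figure~\ref{fig:non-cag-to-interval} becomes a \emph{witnessed minimal non-interval} subgraph of $G^{K}$ (a whipping top for \ref{fig:configuration-1}--\ref{fig:configuration-3}, a sun or \ddag{} graph for \ref{fig:configuration-4} and \ref{fig:configuration-6}), and Lemma~\ref{lem:alternative} then delivers the forbidden subgraph of $G$. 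Your plan of ``adjoin $s$ and the witnesses and verify the isomorphism'' leaves exactly this verification unexecuted, and it is the heart of the proof; it also needs case-specific deductions you do not supply, e.g.\ for Figure~\ref{fig:configuration-2} one must first argue that $x_{3}\in K_{o}$ (else an annotated copy of Figure~\ref{fig:configuration-7} appears) and that the witness of $\{v,u\}$ is adjacent to $x_{3}$ (else Figure~\ref{fig:configuration-8} appears), and for Figures~\ref{fig:configuration-4} and~\ref{fig:configuration-6} one must split on $|F\cap K_{o}|$ to decide between $\overline{S_{3}^{+}}$ and $S^{2}_{k}$. Your proposed ``contraction of consecutive same-part internal vertices to a canonical form'' is not a sound substitute: induced-subgraph containment is not preserved by such contractions, and the paper instead exhibits the witnesses $w_{i}$ explicitly along the dashed path.

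Second, your treatment of Figures~\ref{fig:configuration-7} and~\ref{fig:configuration-8} is wrong as stated. In both, the vertex $x_{3}$ is (after Lemma~\ref{lem:simplicial-iff-i} places the unspecified simplicial vertices in $S$) a vertex of $S$ whose $G$-neighborhood is disjoint from $F\cap K$, so these cases are disposed of immediately by Lemma~\ref{lem:key}, yielding net$^\star$, a region-2 graph, or $(\overline{S_{k}})^{+}$. Indeed your own normalization---``every vertex of $S$ has a proper nonempty subset of $F\cap K$ as its neighborhood in $F$''---already excludes these two configurations, which contradicts your later claim that they produce an $S^{1}_{k}$ or $S^{2}_{k}$ ``in exact analogy with the $\dag$/$\ddag$ cases of Lemma~\ref{lem:alternative}''; these configurations are not $\dag$ or $\ddag$ graphs and no such analogy is available. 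As written the proposal is a plan rather than a proof, and in these cases the plan points at the wrong target.
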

\begin{proof}
  If $H$ contains an annotated copy of a sun as in Figure~\ref{fig:split-non-cag-sun}, then $G$ contains an induced copy of sun$^\star$,  $(\overline{S_{3}})^{+}$, or the graph in Figure~\ref{fig:the-weird} by Proposition~\ref{lem:sun}.  Hence, we may assume otherwise.
  In each graph of Figure~\ref{fig:non-cag-to-interval}, no unspecified simplicial vertex is adjacent to all the vertices in~$K_{o}$.
  Since $H$ does not contain any induced sun, all the unspecified simplicial vertices are from~$S$ by Lemma~\ref{lem:simplicial-iff-i}.
  Let $F$ be the vertex set of the forbidden configuration.
  If $H[F]$ is an annotated copy of Figure~\ref{fig:configuration-7} or \ref{fig:configuration-8}, then $N_{G}(x_{3})$ is disjoint from~$K\cap F$.
  The statement follows from Lemma~\ref{lem:key}.
  For most of the others, we reduce to a minimal non-interval subgraph of~$G^{K}$ and use Lemma~\ref{lem:alternative}.
  If the subgraph is a sun or hole, we find another vertex to form a graph in Figure~\ref{fig:split-non-cag}.
  When we process a forbidden configuration, we assume that $H$ does not contain an annotated copy of a forbidden configuration discussed previously.
  
  Figure~\ref{fig:configuration-1}.
  By definition, there are witnesses~$w_{1}$, $w_{2}$, and~$w_{3}$ for the edges~$v u$, $v x_{2}$, and~$v x_{3}$, respectively.
  In~$G^{K}$, the vertex~$u$ is adjacent to~$x_{1}$, $x_{4}$, $w_{2}$, and~$w_{3}$, and then to~$v$, $x_{2}$, and~$x_{3}$.
  On the other hand, $u$ is not adjacent to~$w_{1}$ in~$G^{K}$.
  Thus, $G^{K}[F\cup \{w_{1}\}]$ is isomorphic to a whipping top, while $w_{2}$ and~$w_{3}$ are witnesses of~$\{v, u, x_{2}\}$ and~$\{v, u, x_{3}\}$, respectively.
  By Lemma~\ref{lem:alternative}, $G$ contains an induced copy of Figure~\ref{fig:whipping-top-derived}.
  
   Figure~\ref{fig:configuration-2}.
   By assumption, there are a witness~$w_{1}$ of~$\{v, x_{2}, x_{3}\}$ and a witness~$w_{2}$ of~$\{v, u\}$.
   Since $\{x_{1}, x_{3}, x_{4}, v, u, w_{1}\}$ does not induce the configuration in Figure~\ref{fig:configuration-7}, $x_{3}\in K_{o}$.
   Since $\{x_{3}, x_{4},v, u, w_{1}, w_{2}\}$ does not induce the configuration in Figure~\ref{fig:configuration-8}, $w_{2}$ and~$x_{3}$ must be adjacent in~$H$.
   In~$G^{K}$, the vertex~$u$ is adjacent to~$x_{1}$, $x_{4}$, $w_{1}$, and then to~$x_{2}$ and~$v$; the vertex~$x_{3}$ is adjacent to~$x_{4}$, and then to~$u$ and~$v$.
   Both~$u$ and~$x_{3}$ are adjacent to~$s$ in~$G^{K}$.
  Thus, $G^{K}[\{w_{2}, x_{1}, x_{2}, v, x_{3}, s, u\}]$ is isomorphic to a whipping top, while $w_{1}$ and~$x_{4}$ are witnesses of~$\{v, u, x_{2}\}$ and~$\{v, u, x_{3}\}$, respectively.
  By Lemma~\ref{lem:alternative}, $G$ contains an induced copy of Figure~\ref{fig:long-claw-derived}.

  Figure~\ref{fig:configuration-3}.
  If $x_{2}\in K_{o}$, then $x_{3}\in K_{s}$ and~$N_{G}(x_{4})$ is disjoint from~$K\cap F$.  The statement follows from Lemma~\ref{lem:key}.
  Thus, $x_{2}\in K_{s}$; for the same reason, $x_{3}\in K_{s}$.
  By assumption, there are a witness~$w_{1}$ of~$\{v, u, x_{2}\}$ and a witness~$w_{2}$ of~$\{v, u, x_{3}\}$.
  In~$G^{K}$, the vertex~$u$ is adjacent to~$x_{1}$,~$x_{4}$, and then to~$x_{2}$, $x_{3}$, and~$v$.
  Thus, $G^{K}[\{s, w_{1}, x_{2}, u, x_{3}, w_{2}, v\}]$ is isomorphic to a whipping top, while $x_{1}$ and~$x_{4}$ are witnesses of~$\{v, u, x_{2}\}$ and~$\{v, u, x_{3}\}$, respectively.
  By Lemma~\ref{lem:alternative}, $G$ contains an induced copy of Figure~\ref{fig:long-claw-derived}.
  
  Figure~\ref{fig:configuration-4}.
  Consider first the case that $F$ consists of six vertices and two of them are from~$K_{o}$.
  By assumption, there is a witness~$w$ of~$\{v, x_{2}, x_{3}\}$.
  The vertex set~$\{w, x_{3}, x_{0}, v, x_{1}, x_{2}\}$ induces a net in~$G$, and none of the degree-one vertices is adjacent to~$u$.
  Thus, we find an induced $\overline{S_{3}^{+}}$ in~$G$.
  Henceforth, we may assume that there exists only one vertex from~$K_{o}$; otherwise, we may drop the vertex~$u$ and consider the rest.
  Let $p = |F| - 3$.  
  By assumption, there exists a witness~$w_{1}$ of~$\{v, u, x_{p}\}$, and for each $i = 2, \ldots, p$, there exists a witness~$w_{i}$ of~$\{v, x_{i}, x_{i+1}\}$.
  In~$G^{K}$, the vertex~$u$ is adjacent to~$x_{0}$, $x_{1}$, and~$w_{i}, i\ge 2$, and hence to~$v$ and~$x_{i}, i\ge 2$.  Thus, $G^K[F\cup \{w_{1}\}]$ is isomorphic to the $\ddag$ graph on at least seven vertices, and for~$i\ge 2$, the vertex~$w_{i}$ is the witness of~$\{v, u, x_{i}, x_{i+1}\}$.
  By Lemma~\ref{lem:alternative}, $G$ contains an induced copy of~$S^{2}_{p}$.
  

  Figure~\ref{fig:configuration-6}.  Suppose first that $|F| = 6$.  Note that $x_{1}$ and~$x_{3}$ are symmetric in this case.  We consider the number of vertices in~$F$ from~$K_{o}$.
  \begin{itemize}
  \item If $|K_{o}\cap F| = 1$, then there is a witness~$w$ of~$\{v, u, x_{2}, x_{4}\}$.  In~$G^{K}$, the vertex~$u$ is adjacent to all the other vertices in~$F$ but not $w$.  Thus, $G^{K}[u, w, x_{1}, x_{2}, x_{3}, x_{4}]$ is isomorphic to a sun. 
    By Lemma~\ref{lem:alternative}, $G[u, w, x_{1}, x_{2}, x_{3}, x_{4}]$ is isomorphic to~$\overline{S_{3}}$.
    Since $v$ is adjacent to none of~$x_{1}$, $x_{2}$, or~$w$ in~$G$, the subgraph~$G[F\cup \{w\}]$ is isomorphic to~$\overline{S_{3}^{+}}$.
  \item If $|K_{o}\cap F| = 2$, then $N_{G}(x_{1})$ or~$N_{G}(x_{3})$ is disjoint from~$K\cap F$, and it follows from Corollary~\ref{cor:key}.
  \item If $|K_{o}\cap F| = 3$, then $v$ is adjacent to neither $x_{1}$ nor~$x_{2}$ in~$G$.  Thus, $G[F\cup\{s\}]$ is isomorphic to~$\overline{S_{3}^{+}}$. 
  \end{itemize}
  In the rest, $|F| \ge 7$, and let $p = |F| - 2$.
  The subgraph~$H[F\setminus \{v\}]$ is an annotated copy of Figure~\ref{fig:configuration-4} if $x_{2}\in K_{s}$.
  We may assume that $x_{p}\in K_{s}$; otherwise, the subgraph~$H[F\setminus \{u\}]$ is also an annotated copy of Figure~\ref{fig:configuration-6}.
  By assumption,
  for each $i = 4, \ldots, p-1$, there exists a witness~$w_{i}$ of~$\{v, x_{2}, x_{i}, x_{i+1}\}$, and there exists a witness~$w_{p}$ of~$\{v, u, x_{p}\}$.
  Since $H[\{v, u, w_{p}, x_{1}, x_{2}, x_{3}\}]$ does not induce the configuration of Figure~\ref{fig:configuration-8}, the vertex~$w_{p}$ must be adjacent to~$x_{2}$.
  In~$G^{K}$, the vertex~$u$ is adjacent to~$x_{1}$, $x_{3}$, and~$w_{i}, i\ge 4$, and hence to~$v$ and~$x_{i}, i\ge 2$; the vertex~$x_{2}$ is adjacent to only $x_{3}$, and then $x_{4}$.
  Thus, $F\setminus \{x_{3}\}\cup \{s, w_{p}\}$ induces a $\ddag$ graph on at least eight vertices, and~$x_{3}$ is a witness of~$\{v, u, x_{2}, x_{4}\}$, while $w_{i}, i\ge 4$, is a witnesses of~$\{v, u, x_{i}, x_{i+1}\}$.
  By Lemma~\ref{lem:alternative}, $G$ contains an induced copy of~$S^{2}_{p+2}$.
\end{proof}

We now deal with minimal non-interval subgraphs of~$H$, for which we use the labeling of vertices given by Figure~\ref{fig:non-interval}. 


\begin{theorem}\label{thm:h-not-interval}
  Let $G$ be a split graph.  If $G$ is not a circular-arc graph, then $G$ contains an induced copy of net$^\star$, the graph in Figure~\ref{fig:long-claw-derived}, Figure~\ref{fig:whipping-top-derived}, or Figure~\ref{fig:the-weird}, $S^{1}_{k}$, $S^{2}_{k}$, or~$\overline{S_{k+1}^{+}}, k \ge 2$.
\end{theorem}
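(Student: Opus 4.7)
The plan is to combine Lemma~\ref{lem:split-non-cag-but-interval} with the case-analysis machinery developed in this section. By Lemma~\ref{lem:split-non-cag-but-interval}, the assumption that $G$ is not a circular-arc graph means that either (i)~$H := G^{N[s]}$ contains an annotated copy of some configuration in Figure~\ref{fig:non-cag-to-interval}, or (ii)~$H$ is not an interval graph. Case (i) is already closed by Lemma~\ref{lem:interval-configurations}, which directly outputs an induced subgraph of $G$ from the listed family. Hence the main task is to handle case (ii).

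Assume then $H$ is not an interval graph, and let $F'$ be the vertex set of an induced minimal non-interval subgraph of $H$. Then $H[F']$ is a hole or one of the four graphs in Figure~\ref{fig:non-interval}. I would first invoke Corollary~\ref{cor:key} to dispose of the subcase where $F'$ is disjoint from $K_s$ or from $K_o$, since that already produces a net$^\star$, a graph from region two, or $(\overline{S_{k}})^{+}$, all of which lie in the target family. Next, if $H[F']$ is a sun, Proposition~\ref{lem:h-maxclique-cag} together with Proposition~\ref{lem:sun} reduces the analysis to one of the annotated suns in Figure~\ref{fig:split-non-cag-sun} and yields an induced $(\overline{S_{3}})^{+}$, sun$^\star$, or the graph in Figure~\ref{fig:the-weird}. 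I would therefore assume $H[F']$ is a hole, long claw, whipping top, $\dag$ graph, or $\ddag$ graph meeting both $K_s$ and $K_o$, and apply Lemma~\ref{lem:simplicial-iff-i} to normalize the simplicial vertices of $F'$ into $S \cup K_o$ (with the neighbor restriction for those in $K_o$).

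In the remaining situation, the idea is to show that the coexistence of $K_s$ and $K_o$ vertices inside $F'$ forces an annotated copy of some configuration of Figure~\ref{fig:non-cag-to-interval} to appear in $H$, possibly after augmenting $F'$ with witnesses from $S$ provided by the construction of $H$; once such a configuration is exhibited we fall back to case (i) and finish via Lemma~\ref{lem:interval-configurations}. Concretely, for each shape of $F'$ I would localize the analysis around a $K_s$--$K_o$ edge (or, for $\dag$/$\ddag$, around the transition between a $K_s$ segment and a $K_o$ segment of the spine) and enumerate the bounded configurations that arise: a hole yields one of Figures~\ref{fig:configuration-1}, \ref{fig:configuration-2}, or~\ref{fig:configuration-7}; a long claw or whipping top yields one of Figures~\ref{fig:configuration-1}--\ref{fig:configuration-3}; a $\dag$ or $\ddag$ yields one of Figures~\ref{fig:configuration-4}, \ref{fig:configuration-6}, or~\ref{fig:configuration-7}.

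The principal obstacle is the uniform treatment of the $\dag$ and $\ddag$ families, where the parameter is unbounded and the positions of $K_s$ and $K_o$ vertices along the spine admit many patterns. The trick will be to argue that, up to symmetry, only a bounded neighborhood of the $K_s$--$K_o$ transition on the spine is relevant: the constraints forced by Lemma~\ref{lem:simplicial-iff-i} on the simplicial ``tails'', together with the witness control of Proposition~\ref{lem:h-maxclique-cag}, should restrict the configurations to finitely many representative patterns, each mapping to a specific graph in Figure~\ref{fig:non-cag-to-interval}. All other shapes of $F'$ involve only finitely many possibilities and are dispatched by direct inspection.
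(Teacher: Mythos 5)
Your setup matches the paper's: split into the two failure modes of Lemma~\ref{lem:split-non-cag-but-interval}, close the annotated-configuration case with Lemma~\ref{lem:interval-configurations}, and use Corollary~\ref{cor:key}, Proposition~\ref{lem:h-maxclique-cag}, and Lemma~\ref{lem:simplicial-iff-i} to prune the non-interval case. But the core of your plan for the non-interval case --- that a minimal non-interval subgraph $F'$ of $H$ meeting both $K_s$ and $K_o$ always forces an annotated copy of a graph in Figure~\ref{fig:non-cag-to-interval}, so that one can ``fall back to case~(i)'' --- is a genuine gap, and it is not how the paper argues. The graphs in Figure~\ref{fig:non-cag-to-interval} are interval graphs engineered to witness the failure of condition~\eqref{eq:1} when $H$ \emph{is} interval; when $H$ is not interval, there is no reason such a configuration must exist. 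Concretely: a long claw in $H$ with center $v_0\in K_o$ and all other clique vertices in $K_s$ need not contain (or generate via witnesses) any of Figures~\ref{fig:configuration-1}--\ref{fig:configuration-3}, and a hole with exactly one $K_o$ vertex likewise yields none of the configurations you name. The paper's actual mechanism is different: it translates the obstruction in $H=G^{N[s]}$ into a \emph{witnessed minimal non-interval subgraph of the other auxiliary graph $G^K$} (or directly into an induced subgraph of $G$) and then invokes Lemma~\ref{lem:alternative}, which is the bridge from $G^K$ to the graphs $S^1_k$, $S^2_k$, etc. Your proposal never uses Lemma~\ref{lem:alternative} or $G^K$ at all, so even in the subcases where your localization does find a configuration, you have no route from it to the named forbidden subgraphs except back through Lemma~\ref{lem:interval-configurations}, whose proof itself relies on that bridge.

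Two further omissions. First, when $|F'\cap K_o|\ge 2$ the paper runs an induction over the choice of simplicial vertex: if some $x\in S$ has $|N_G(x)\cap F'|>|K_s|$, one restarts with $H'=G^{N[x]}$, strictly decreasing $|F'\cap K_o|$. Your ``localize around the $K_s$--$K_o$ transition'' heuristic has no substitute for this, and several subcases (e.g., the whipping top with $v_0,x_1\in K_o$, or the $\ddag$ graph with $u_1,u_2\in K_o$) are resolved in the paper only by this re-rooting. Second, Figure~\ref{fig:split-non-cag-sun} lists only \emph{some} of the annotated suns, so Proposition~\ref{lem:sun} alone does not dispose of the sun case; the remaining sun annotations require the separate analysis the paper gives in the $|F\cap K_o|\ge 2$ part. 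As written, your proof would not go through without importing both the $G^K$/Lemma~\ref{lem:alternative} translation and the induction on the choice of $s$.
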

\begin{proof}
  By Lemma~\ref{lem:split-non-cag-but-interval}, $H$ contains an annotated copy of a forbidden configuration.
  If $H$ contains an annotated copy of any graph in Figure~\ref{fig:non-cag-to-interval} or in Figure~\ref{fig:split-non-cag-sun}, then the statement follows from Lemma~\ref{lem:interval-configurations} and Proposition~\ref{lem:sun}, respectively.
  Hence, we assume that $H$ is not an interval graph, and it contains a forbidden configuration with the two properties specified in Lemma~\ref{lem:simplicial-iff-i}.
  Let $F$ be the vertex set of this forbidden configuration.
  We may also assume without loss of generality that $F$ intersects $K_{o}$; otherwise we can use Lemma~\ref{lem:key}.
  Since $H$ does not contain any annotated copy of a graph in Figure~\ref{fig:split-non-cag-sun}, every clique with at most one vertex from~$K_{o}$ is witnessed by Proposition~\ref{lem:h-maxclique-cag}.
  

  First, we consider the case that $|F\cap K_{o}| = 1$.
  Since $F$ satisfies Lemma~\ref{lem:simplicial-iff-i}, all its simplicial vertices are from~$S$.

  \begin{itemize}
  \item
    Hole.  Let it be $v_{1} v_{2} \cdots v_{\ell}$, and assume without loss of generality that $v_{1}\in K_{o}$.  By definition, for $i = 1, \ldots, \ell$, there are witnesses~$w_{i}$ of the edge~$v_{i}v_{i+1\pmod \ell}$.  In~$G^{K}$, the vertex~$v_{1}$ is adjacent to~$w_{2}$, $\ldots$, $w_{\ell - 1}$, and hence $v_{2}$, $\ldots$, $v_{\ell}$.  Thus, $G^{K}[\{s, v_{1}, w_{1}, v_{2}, \ldots, v_{\ell}, w_{\ell}\}]$ is isomorphic to the $\dag$ graph of order~$\ell + 3 \ge 7$.
    By Lemma~\ref{lem:alternative}, $G$ contains an induced copy of~$S^{1}_{\ell-1}$.
  \item Long claw.
    Since $H$ does not contain the configuration in Figure~\ref{fig:configuration-1}, $F\cap K_{o}\not\subseteq \{v_{1}, v_{2}, v_{3}\}$.
    Thus, $F\cap K_{o} = \{v_{0}\}$.
    For $i = 1, 2, 3$, there is a witness~$w_{i}$ of~$v_{0}v_{i}$.  They are distinct because $v_{1}$, $v_{2}$, and~$v_{3}$ are pairwise nonadjacent.
    Then $G^K[\{v_{0}, \ldots, v_{3}, w_{1}, w_{2}, w_{3}\}]$ is isomorphic to a long claw, and the vertex~$x_{i}, i = 1, 2, 3$, is a witness of the edge~$v_{0} v_{i}$.
    (The roles of~$x_{i}$ and~$w_{i}$ are switched.)
    By Lemma~\ref{lem:alternative}, $G$ contains an induced copy of Figure~\ref{fig:long-claw-derived}.
  \item Whipping top.
    Since $H$ does not contain the configuration in Figure~\ref{fig:configuration-2} or \ref{fig:configuration-3}, $F\cap K_{o}\not\subseteq \{v_{1}, v_{2}, v_{3}\}$.
    Thus, $F\cap K_{o} = \{v_{0}\}$.    
    By assumption, there are a witness~$w_{1}$ of~$\{v_{0}, v_{1}, v_{2}\}$ and a witness~$w_{2}$ of~$\{v_{0}, v_{2}, v_{3}\}$.
    In~$G^K$, the vertex~$v_{0}$ is adjacent to~$s$ and its neighbors in~$F$ are $x_{2}$ and~$v_{2}$. 
    Thus, $G^K[\{s, v_{0}, \ldots, v_{3}, x_{1}, x_{3}\}]$ is a long claw, where the edges~$v_{2} v_{0}$, $v_{2} v_{1}$, and~$v_{2} v_{3}$ are witnessed by~$x_{2}$, $w_{1}$, and~$w_{3}$, respectively.
    By Lemma~\ref{lem:alternative}, $G$ contains an induced copy of Figure~\ref{fig:long-claw-derived}.
  \item $\dag$ graph.
    If $u_{0}$ is the only vertex in~$F\cap K_{o}$, then $x_{2}$ is adjacent to all the vertices in~$F\cap K$, and the statement follows from Lemma~\ref{lem:key}.
    It is similar if the only vertex in~$F\cap K_{o}$ is $x_{1}$ or~$x_{3}$.
Thus, $p \ge 3$, and~$F\cap K_{o}$ is $v_{i}$ with $1 < i < p$.
Since $H$ does not contain an annotated copy of Figure~\ref{fig:configuration-4}, $v_{i}$ must be adjacent to both $v_{1}$ and~$v_{p}$; i.e., $p = 3$ and~$i = 2$.
Then $G[F\cup \{w_{1}, w_{2}\}]$, where $w_{1}$ and~$w_{2}$ are witnesses of~$\{u_{0}, v_{1}, v_{2}\}$ and~$\{u_{0}, v_{2}, v_{3}\}$, respectively, is isomorphic to Figure~\ref{fig:the-weird}.  
  \item $\ddag$ graph.
    Since $H$ does not contain an annotated copy of Figure~\ref{fig:configuration-7}, the vertex in~$F\cap K_{o}$ must be one of~$\{v_{1}, v_{p}, u_{1}, u_{2}\}$.
    Consider first $v_{1}$, and it is symmetric for~$v_{p}$.
    Since $H$ does not contain an annotated copy of Figure~\ref{fig:configuration-4} or \ref{fig:configuration-6}, $p = 1$.  
    In $G^{K}$, the vertex~$v_{1}$ is adjacent to~$x_{2}, u_{1}, u_{2}$ in~$F$, and~$s$.
    Thus, $G^{K}[\{s, v_{1}, x_{1}, u_{1}, x_{3}, u_{2}\}]$ is isomorphic to a net, and~$x_{2}$ is the witness of~$\{v_{1}, u_{1}, u_{2}\}$.
    By Lemma~\ref{lem:alternative}, $G$ contains an induced copy of ~$S^{1}_{1}$.
    Consider then $u_{1}$, and it is symmetric for~$u_{2}$.
    It is the same as above when $p = 1$, and hence we assume $p \ge 2$.
    By assumption, for $i = 1, \ldots, p-1$, there is a witness~$w_{i}$ of~$\{u_{1}, u_{2}, v_{i}, v_{i+1}\}$.
    In~$G^K$, the vertex~$u_{1}$ is adjacent to~$x_{3}$,$u_{2}$, $v_{p}$, and~$s$.
    Thus, $G^K[\{x_{2}, u_{2}, x_{1}, v_{1}, \ldots, v_{p}, u_{1}, s\}]$ is isomorphic to a \dag.
    The vertex~$x_{3}$ is a witness of~$\{u_{1}, u_{2}, v_{p}\}$, while $w_{i}, i = 1, \ldots, p-1$ is a witness of~$\{u_{2}, v_{i}, v_{i+1}\}$.
    By Lemma~\ref{lem:alternative}, $G$ contains an induced copy of~$S^{2}_{p-1}$.
  \end{itemize}
  
  From now on, $|F\cap K_{o}| \ge 2$.
  By Proposition~\ref{lem:seed}, $G[F\cup S]$ is not a circular-arc graph.
  If there exists a vertex~$x\in S$ such that $|N_G(x)\cap F| > |K_{s}|$, we can use the induction hypothesis.
  Note that $G[F\cup S]^{(N(x)\cap F)\cup \{x\}}$ contains an annotated copy of a forbidden configuration (Lemma~\ref{lem:split-non-cag-but-interval}), and~$|F\setminus N_G(x)| < |F\cap K_{o}|$.

  \begin{itemize}
  \item Long claw.
    Since $K_{o}$ is a clique, one of the degree-two vertices must be in~$K_{o}$; assume it is $v_{1}$.  
    Since $H$ does not contain the configuration Figure~\ref{fig:configuration-1}, $F\cap K_{o} = \{v_{0}, v_{1}\}$.  Then $\{v_{1}, v_{2}, v_{3}\}\subseteq N_G(x_{1})$ and~$|N_G(x_{1})\cap F| > |K_{s}|$.
    We can use the induction hypothesis on the graph~$G^{N[x_1]}$.
  \item Whipping top.
    Since $H$ does not contain the configuration Figure~\ref{fig:configuration-3}, if $v_{2}\in K_{o}$, then $v_{0}\in K_{o}$ and~$x_{2}\not\in K_{o}$.  We can use the induction hypothesis on the graph~$G^{N[x_2]}$.
    In the sequel, $v_{2}\not\in K_{o}$.
    Since $H$ does not contain the configuration Figure~\ref{fig:configuration-2}, $v_{1}\not\in K_{o}$.
    Thus, $F\cap K_{o}$ comprises $v_{0}$ and one of~$x_{1}$ and~$x_{3}$.  We consider $\{x_{1}, v_{0}\}$ and the other is symmetric.
    By definition, there is a witness~$w$ of~$x_{1}v_{1}$.
    If $w v_{0}\in E(H)$, then we can replace $x_{1}$ with $w$ to get an induced whipping top in~$H$ that has only one vertex from~$K_{o}$.
    Otherwise, $\{w, v_{1}, x_{2}, v_{2}, x_{3}, v_{0}\}$ induces a net in~$H$, and only one vertex in them is from~$K_{o}$.  In either case, we can use the induction hypothesis.
  \item $\dag$ graph.
    Since $H$ does not contain the configuration Figure~\ref{fig:configuration-8}, at least one of~$u_{0}$, $v_{1}$, and~$v_{p}$ is in~$K_{o}$.
    As discussed above, if $K_{o}$ contains $u_{0}$ but not $x_{2}$, we can use the induction hypothesis on the graph~$G^{N[x_2]}$.
    It is similar if $K_{o}$ contains $v_{1}$ but not $x_{1}$
    or contains $v_{p}$ but not $x_{3}$.
    Thus, $F\cap K_{o}$ is either $\{u_{0}, x_{2}\}$, $\{v_{1}, x_{1}\}$, or~$\{v_{p}, x_{3}\}$.
    The three pairs are symmetric when $p = 2$.
    On the other hand, if $p > 2$ and~$F\cap K_{o} = \{v_{1}, x_{1}\}$ or~$\{v_{p}, x_{3}\}$, then $H$ contains an annotated copy of Figure~\ref{fig:configuration-4}.
    Thus, it suffices to consider $F\cap K_{o} = \{u_{0}, x_{2}\}$.
    By assumption, for $i = 1, \ldots, p-1$, there is a witness~$w_{i}$ of the clique~$\{u_{0}, v_{i}, v_{i+1}\}$.
    The subgraph of~$G$ induced by~$\{u_{0}, v_{1}, \ldots, v_{p}, x_{1}, w_{1}, \ldots, w_{p-1}, x_{3}\}$ is isomorphic to a $\overline{S_{p+1}}$.
    Since $x_{2}$ is adjacent to ~$\{u_{0}, v_{1}, \ldots, v_{p}\}$ and not to~$\{x_{1}, w_{1}, \ldots, w_{p-1}, x_{3}\}$, the graph~$G$ contains an induced $\overline{S_{p+1}^{+}}$.
  \item Sun.
    We can use Proposition~\ref{lem:sun} if $F\cap K_{o} = \{u_{1}, u_{2}\}$ or~$\{x_{2}, u_{1}, u_{2}\}$, and Proposition~\ref{lem:key} if $F\cap K_{o} = \{v_{1}, u_{1}, u_{2}\}$.
    The only remaining case is $F\cap K_{o}$ comprises a degree-two vertex and one of its neighbors.
    Without loss of generality, assume $F\cap K_{o} = \{u_{1}, x_{2}\}$.
    Let $w$ be a witness of~$x_{2}$ and~$u_{2}$.
    If $w$ is adjacent to~$u_{1}$ in~$H$, then we can replace $x_{2}$ with $w$ and use the induction hypothesis.
    In the rest, $w u_{1}\not\in E(H)$.
    In~$G^{K}$, the vertex~$u_{1}$ is adjacent to~$w$ and~$x_{3}$, and hence to~$u_{2}$ and~$v_{1}$, and~$x_{2}$ is adjacent to~$x_{1}$ and~$x_{3}$, and hence to~$u_{2}$ and~$v_{1}$.
    Thus, $G^{K}[\{s, u_{1}, x_{2}, w, u_{2}, v_{1}, x_{1}\}]$ is isomorphic to a rising sun, and~$x_{3}$ is a witness of$G^{K}[\{v_{1}, u_{1}, u_{2}, x_{2}\}]$.
    By Lemma~\ref{lem:alternative}, $G$ contains an induced copy of ~$S^{2}_{1}$.    
  \item $\ddag$ graph with $p \ge 2$.
    First consider $\{u_{1}, u_{2}\}\subseteq F\cap K_{o}$.
    If $x_{2}\not\in F\cap K_{o}$, then we can use the induction hypothesis on the graph~$G^{N[x_{2}]}$.
    Hence, $F\cap K_{o} = \{x_{2}, u_{1}, u_{2}\}$.
    If for some $i\in \{1, \ldots, p - 1\}$, the clique ~$\{u_{1}, u_{2}, v_{i}, v_{i+1}\}$ is not witnessed, then
    we can find a witness~$w'$ of~$\{u_{1}, v_{i}, v_{i+1}\}$ and a witness~$w''$ of~$\{u_{2}, v_{i}, v_{i+1}\}$; note that $u_{2} w', u_{1} w''\not\in E(H)$.
    Then $H[\{x_{2}, u_{1}, u_{2}, w', w'', v_{i}\}]$ is an annotated copy of \ref{fig:split-non-cag-sun}d.
    In the rest, for each $i = 1, \ldots, p - 1$, there exists a witness~$w_{i}$ of~$\{u_{1}, u_{2}, v_{i}, v_{i+1}\}$.
    Thus, $G[F\cup \{w_{1}, w_{2}, \ldots, w_{p-1}, s\}]$ is isomorphic to~$\overline{S_{p+2}^{+}}$.

    Henceforth, $F\cap K_{o}$ contains at most one of~$u_{1}$ and~$u_{2}$.  We may assume without loss of generality that $u_{2}\not\in F\cap K_{o}$.

    If $F\cap K_{o} = \{u_{1}, x_{2}\}$, let $w$ be a witness of~$\{x_{2}, u_{2}\}$.
    Since $H[\{x_{3}, u_{2}, v_{1}, u_{1}, x_{2}, w\}]$ is not an annotated copy of Figure~\ref{fig:configuration-8}, $w u_{1}\in E(H)$.
    Then we can replace $x_{2}$ with $w$ and use the induction hypothesis.
    If $F\cap K_{o} = \{u_{1}, x_{1}\}$, we take a witness~$w$ of~$\{x_{1}, v_{1}\}$.
    If $w u_{1}\in E(H)$, then we can replace $x_{1}$ with $w$ to get an induced $\ddag$.
    Otherwise, 
    $H[\{x_{2}, u_{1}, w, v_{1}, v_{2}, \ldots, v_{p}, x_{3}\}$ is isomorphic to~$\dag$.  In either case, this new non-interval subgraph contains only one vertex from~$K_{o}$, and we can use the induction hypothesis.

    The only remaining  case is $F\cap K_{o} = \{u_{1}, v_{i}\}$ for some $i = 1, \ldots, p$.
    If $p > 2$ (i.e., $|F| > 7$), then $H$ contains an annotated copy of Figure~\ref{fig:configuration-4} when $i = p$, or an annotated copy of Figure~\ref{fig:configuration-6} when $i < p$.
    Thus, $|F| = 7$.
    Again, $H$ contains an annotated copy of Figure~\ref{fig:configuration-6} when $F\cap K_{o} = \{u_{1}, v_{1}\}$.
    Hence, $F\cap K_{o} = \{u_{1}, v_{2}\}$.
    By assumption, there is a witness~$w$ of~$\{u_{1}, u_{2}, v_{1}\}$.
    Since $H[\{x_{1}, v_{1}, u_{2}, x_{2}, v_{2}, w\}]$ is not an annotated copy of Figure~\ref{fig:configuration-7}, we have $w v_{2}\in E(H)$.
    Then $G[F\cup \{w, s\}]$ is isomorphic to the graph in Figure~\ref{fig:the-weird}.
  \end{itemize}

  The proof is complete.  
\end{proof}

\appendix
\section{Sketch of the certifying recognition algorithms}
We obtain a certifying recognition algorithm for recognizing circular-arc graphs by making the proofs in Section~\ref{sec:cag} constructive.  
We outline the algorithm in Figure~\ref{alg:certifying-recognition}.

\begin{figure}[h!]
  \centering
  \begin{tikzpicture}
    \path (0,0) node[text width=.85\textwidth, inner xsep=20pt, inner ysep=10pt] (a) {
      \begin{minipage}[t!]{\textwidth}
        \begin{tabbing}
          AAA\=Aaa\=aaa\=Aaa\=MMMMMAAAAAAAAAAAA\=A \kill
          1. \> find a simplicial vertex $s$ and construct the graph~$G^{N[s]}$;
          \\
          2. \> \textbf{if} $G^{N[s]}$ is an interval graph \textbf{then}
          \\
          2.1. \>\> try to find an interval model $\mathcal{I}$ of $G^{N[s]}$ which satisfies condition~\eqref{eq:1};
          \\
          2.2. \>\> \textbf{if} step~2.1 succeeds \textbf{then}
          \\
          2.2.1. \>\>\> translate $\mathcal{I}$ into a circular-arc model~$\mathcal{A}$ of~$G$;
          \\
          2.2.2. \>\>\> \textbf{return} $\mathcal{A}$;
          \\
          2.3. \>\> \textbf{else} find a forbidden configuration~$X$ in $G^{N[s]}$;
          \\
          3. \> \textbf{else} find a minimal non-interval graph~$X$ of~$G^{N[s]}$;
          \\
          4. \> translate $X$ into a minimal forbidden induced subgraph~$F$ of~$G$;
          \\
          5. \> \textbf{return} $F$.
        \end{tabbing}
      \end{minipage}
    };
    \draw[draw=gray!60] (a.north west) -- (a.north east) (a.south west) -- (a.south east);
  \end{tikzpicture}
  \caption{Outline of the certifying recognition algorithm.}
  \label{alg:certifying-recognition}
\end{figure}

Clearly, we are able to construct $G^{N[s]}$ in polynomial time.
If $G^{N[s]}$ is an interval graph, we try to find an interval model satisfying condition~\eqref{eq:1}.
This can be done as the proof of Lemma~\ref{lem:split-non-cag-but-interval} can be easily turned into a polynomial algorithm.
When there is an annotated copy of a forbidden configuration in Figure~\ref{fig:non-cag-to-interval}, it is explicitly given in the proof.
If we succeed, $G$ is a circular-arc graph. 
We can use Theorem~\ref{thm:correlation} to construct a circular-arc model~$\mathcal{A}$ for~$G$.
Step~3 calls the algorithm of Lindzey and McConnell~\cite{lindzey-16-find-forbidden-subgraphs} to find a minimal non-interval subgraph when $G^{N[s]}$ is not an interval graph.
Step~4 translates the forbidden configuration~$X$ to a minimal forbidden induced subgraph of~$G$, which also can be done in polynomial time.


The only obstacle toward a linear-time implementation is to decide whether two vertices double overlap, a crucial step in the construction of~$G^{N[s]}$.
The procedure of McConnell~\cite[Theorem~7.11]{mcconnell-03-recognition-cag} for this purpose works only when the input graph is a circular-arc graph.  For a certifying algorithm, we need the answer even the graph is not.
As long as we take~$s$ to be a vertex with the minimum degree, the size of~$G^{N[s]}$ is upper bounded by that of~$G$.  On the other hand, this is not the case for the recognition of Helly circular-arc graphs: the size of~$G^{K}$ cannot be bound by that of~$G$ (e.g., when~$|K| = \Theta(\sqrt{|V(G)|})$ and every vertex in~$S$ has a constant degree).
We can take an indirect approach for Helly circular-arc graphs: check whether~$G$ is a circular-arc graph when it is ambiguous (Theorem~\ref{lem:split-cag-and-hcag}) or~$G^{+}$ otherwise.



\bibliographystyle{plainurl}
\bibliography{references}

\end{document}